\newcommand{\bigslant}[2]{{\raisebox{.3em}{$#1$}\left/\raisebox{-.3em}{$#2$}\right.}}
\theoremstyle{plain}
\newtheorem{thm}{Theorem}[section]
\newtheorem{cor}[thm]{Corollary}
\newtheorem{lem}[thm]{Lemma}
\newtheorem{rem}[thm]{Remark}
\newtheorem{step}{Step}
\newtheorem{thm*}{Theorem}[]
\theoremstyle{definition}
\newtheorem{defi}[thm]{Definition}
\newtheorem*{exa}{Example}
\newtheorem*{ack}{Acknowledgements}
\theoremstyle{step}
\newtheorem{stepp}{Step}
\date{}
\author{Kasia Jankiewicz}
\address{Dept. of Math. and Stat., McGill University, Montreal, Quebec, Canada}
\email{kasia.jankiewicz@mail.mcgill.ca}
\title{The Fundamental Theorem of Cubical Small Cancellation Theory}
\begin{document}

\maketitle\begin{abstract}We give a new proof of the main theorem in the theory of $\mathrm C(6)$ small cancellation complexes. We prove the fundamental theorem of cubical small cancellation theory for $\mathrm C(9)$ cubical small cancellation complexes.\end{abstract}

\section*{Introduction}
\addcontentsline{toc}{chapter}{Introduction}

Small cancellation theory studies groups with the property that 
the relators in their group presentation have small overlaps with 
each other. The theory, initiated by Tartakovskii \cite{tartakovskii}, 
was developed by Greendlinger and others in the 60s, however 
some ideas appeared much earlier, in the work of Dehn, among others. 
The geometric approach in the study of small cancellation groups, 
i.e.\ the use of disc diagrams, was introduced by Lyndon and can 
be found in Chapter V of \cite{lyndon}. In geometric language, a 
combinatorial $2$-complex satisfies the metric small cancellation 
condition C$'(\frac 1 p)$, if each \emph{piece}, i.e.\ a path arising in 
two ways as a subpath of $2$-cell attaching maps, has length less 
than $\frac 1 p$ of the length of the boundary path of a $2$-cell 
containing the piece. The non-metric small cancellation condition 
$\mathrm C(p)$ requires that the boundary path of each $2$-cell 
cannot be expressed as a concatenation of fewer than $p$ pieces. 
Note that the condition C$'(\frac 1 p)$ implies the condition 
$\mathrm C(p+1)$. The fundamental theorem in the theory takes 
the following form:
\begin{thm*}\label{0.1} Let $X$ be a $\mathrm C(6)$-complex and 
$D\to X$ a minimal disc diagram. One of the following holds:
\begin{itemize}
\item $D$ is a single cell,
\item $D$ is a ladder, or
\item $D$ has at least three spurs or shells of degree $\leq 3$.
\end{itemize}
\begin{figure}\centering
\includegraphics{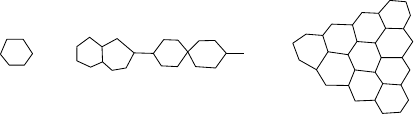}
\caption{Disc diagrams in $\mathrm C(6)$-complexes.}
\end{figure}

\end{thm*}

\noindent For details, see Theorem~\ref{classical}, all definitions 
can be found in Section 1. Theorem \ref{0.1} incorporates a variation 
of Greendlinger's Lemma as well as a "ladder result" that is a variant 
of a classical result on annular diagrams. In its initial form, 
Greendliger's Lemma was used to prove that Dehn's algorithm 
solves the word problem in C$'(\frac 1 6)$ small cancellation groups 
(see \cite{greendlinger}). The above formulation was given by 
McCammond and Wise in \cite{fans}. Unlike the proof presented 
in this paper, their proof uses combinatorial curvature and the 
combinatorial Gauss-Bonnet formula.

Cubical small cancellation theory is a generalization of classical theory 
and it was introduced and developed by Wise in \cite{hierarchy}. 
This builds upon the theory of non-positively curved cube complexes. 
Instead of a standard group presentation, we use cubical presentation, 
i.e.\  express a group as 
$\pi_1(X)/\langle\!\langle \{\phi_{i*}\big(\pi_1(Y_i)\big)\}\rangle\!\rangle$, 
where $X$ is a non-positively curved cube complex and each 
$\phi_i:Y_i\to X$ is a local isometry of cube complexes.Immersed 
complexes $Y_i$ play the role of relators in classical theory. 
We now introduce two types of pieces, contained in the intersection 
either of two ``relators'', or of a ``relator'' and the carrier of a hyperplane. 
Our main result is the following: 
\begin{thm*}\label{main} Let $\langle X,\{Y_i\}\rangle$ satisfy 
$\mathrm C(9)$ and let $(D,\partial D)\to (X^*,X)$ be a minimal 
disc diagram. Then one of the following holds:
\begin{itemize}
\item $D$ is a single vertex or single cone-cell,
\item $D$ is a ladder,
\item $D$ has at least three shells of degree $\leq 4$ and/or corners 
and/or spurs.
\end{itemize}
\end{thm*} 
\noindent The notation is explained in Section~\ref{section cubical presentation}. 
The theorem in the case of $\mathrm C(12)$ is due to Wise and can be 
found in \cite{hierarchy}. Our result partially answers the question on the 
limits of the theory posed by Wise in section 3.r in \cite {hierarchy}. Compared 
to the proof in \cite{hierarchy} our explanation is shorter, self-contained and 
works for the more general condition $\mathrm C(9)$ instead of $\mathrm C(12)$. 
Wise's approach generalizes the classical case in ways we have not engaged 
with, but the most important result there is covered here.

The paper is divided into five sections. Section 1 presents some preliminaries; 
we set up notation and terminology that is used throughout the paper. It also 
provides an exposition of classical small cancellation theory. In Section 2 we 
give a new proof of Theorem \ref{0.1}. In Section 3 we will look more closely 
at non-positively curved cube complexes and prove the following theorem:
\begin{thm*}\label{0.3} Let $X$ be a non-positively curved cube complex and 
$D\to X$ be a minimal disc diagram. Then $D$ is a path graph or it has at 
least three corners and/or spurs.\end{thm*}
This proof is intended to motivate our approach in the proof of Theorem~\ref{main}. 
Section 4 provides the exposition of cubical small cancellation theory. 
Finally, Section 5 is devoted to our main result - Theorem~\ref{main}. 
We first introduce the notion of $D$-walls, which are the crucial tool in 
our approach, and then after a few lemmas we proceed with the proof.

\begin{ack} I am deeply grateful to my advisor, Piotr Przytycki, for being 
simply the best. I would also like to thank Damian Osajda and Daniel Wise 
for helpful discussions and Maciej Zdanowicz for his support.
\end{ack}
\nocite{fans}

\section{Basic definitions}
In this section we give definitions of classical small cancellation theory, 
following mainly \cite{lyndon} and \cite{cubulatingsmall}.

\subsection{Cell complexes}
A map $\phi:X\to Y$ between CW-complexes $X,Y$ is called \emph{combinatorial}, 
if its restriction to any open cell of $X$ is a homeomorphism onto an open cell of $Y$. 
A CW-complex $X$ is called \emph{combinatorial}, if the attaching map of each open 
cell in $X$ is combinatorial for some subdivision of the sphere. We will refer to a closed 
cell as a \emph{cell}. A cell of dimension $0$ is called a \emph{vertex} and a cell of 
dimension $1$ is called an \emph{edge}. Combinatorial map $\phi:X\to Y$ between 
combinatorial complexes $X,Y$ is a \emph{combinatorial immersion} if it is locally injective.

An \emph{$n$-cube} is a copy of $[-1,1]^n$. A \emph{face} of a cube is a subspace 
obtained by restricting some coordinates to $\pm 1$, faces are cubes of lower 
dimension. A \emph{cube complex} is a combinatorial complex whose cells are cubes 
(with subdivision of the boundary consisting of all faces of the cube). A cube of dimension 
$2$ is called a \emph{square}.  

A \emph{valence} of a vertex $v\in X$ is the number of edges in $X$ incident to $v$ with 
loops counted twice. A \emph{path graph} is a $1$-complex $P$ which is homeomorphic 
to an interval (possibly degenerated, i.e.\  a single point). The value $\#\{vertices\}-1$ is 
called the \emph{length} of $P$ and is denoted by $l(P)$. A combinatorial immersion 
$P\to X$ where $P$ is a path graph is called a \emph{combinatorial path}. The images 
of vertices of valence $1$ in $P$ are called \emph{endpoints} of $P$. A path graph of 
length $n$ is denoted by $I_n$. 

\subsection{Disc diagrams}

A \emph{disc diagram} $D$ is a compact, contractible 2-complex with a fixed embedding 
in the plane. A \emph{disc diagram $D$ in $X$} is a combinatorial map $D\to X$ where 
$D$ is a disc diagram. The \emph{boundary path} of $D$ is the attaching map of the 
$2$-cell containing the point at $\infty$ (regarding $S^2=\mathbb{R}^2\cup \infty$). 
See Figure~\ref{boundarypaths}.
\begin{figure}[h]\centering\includegraphics{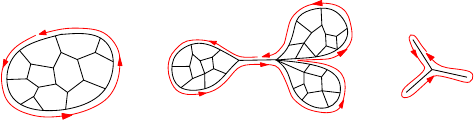}\caption{Boundary paths.}\label{boundarypaths}\end{figure}
The number of $2$-cells in $D$ is called the \emph{area} of $D$ and is denoted by 
$\text{area}(D)$. A \emph{minimal disc diagram} is a disc diagram $D\to X$ with the 
boundary path $P\to X$ such that $\text{area}(D)$ is minimal among all disc diagrams 
with boundary path $P$. If a disc diagram is homeomorphic to a disc, it is called \emph{nonsingular}.
A $2$-cell $C$ is a \emph{boundary cell}, if $C\cap\partial D\neq\emptyset$ and $C$ 
is an \emph{internal cell} otherwise. An edge $e$ is a \emph{boundary edge}, if 
$e\subset \partial D$, $e$ is \emph{semi-internal}, if $e\cap \partial D\neq\emptyset$ 
but $e \not\subset \partial D$ and $e$ is \emph{internal} if $e\cap\partial D=\emptyset$. 
A vertex $v$ is a \emph{boundary vertex}, if $v\in \partial D$, and $v$ is an 
\emph{internal vertex} otherwise. 
A combinatorial path $P\to D$ of length $\geq 1$ with endpoints of valence $\geq 3$ 
in $D$ and all other vertices of valence $2$ in $D$ is called an \emph{arc}. Note that 
every arc is embedded except for endpoints possibly. 
We call an arc $P$ in $D$ a \emph{boundary arc} if $P\subset \partial D$, and an 
\emph{internal arc} otherwise.
The \emph{internal subdiagram} of $D$, denoted by $\text{Int}_D$, is the subcomplex 
consisting of all internal $2$-cells and all arcs that intersect $\partial D$ trivially. 
See Figure~\ref{internalsubdiagram}. A disc diagram which is a cube complex is called 
a \emph{squared disc diagram}. It has cells of three types: vertices, edges and squares.

\begin{figure}[h]\centering\includegraphics{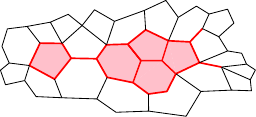}\caption{The internal subdiagram.}\label{internalsubdiagram}\end{figure}

\subsection{$\mathrm C(p)$-small cancellation condition}
Let $X$ be a combinatorial $2$-complex. A nontrivial combinatorial path $P\to X$ is 
a \emph{piece} if there are 2-cells $C_1, C_2$ such that $P\to X$ factors as 
$P\to \partial C_1\to X$ and $P\to\partial C_2\to X$ but there does not exist a 
homeomorphism $\partial C_1\to\partial C_2$ such that the diagram
\begin{center}
\begin{tikzpicture}
[description/.style={fill=white,inner sep=2pt}]
       \matrix (m) [matrix of math nodes, row sep=3em, column sep=2.5em, text height=1.5ex, text depth=0.25ex]
       { P & \partial C_1\\
           \partial C_2& X \\};
       \path[->,font=\scriptsize]
       (m-1-1) edge node[auto] {} (m-1-2)
       (m-1-1) edge node[left] {} (m-2-1)
       (m-1-2) edge node[auto] {} (m-2-2)
       (m-1-2) edge node[auto] {} (m-2-1)
       (m-2-1) edge node[auto] {} (m-2-2);
\end{tikzpicture}
\end{center}
commutes. 
A \emph{maximal piece} is a piece that is not a proper subpath of any piece. 
Note that in a minimal disc diagram $D$ notions of maximal pieces and of internal 
arcs coincide. Every internal and semi-internal edge in such a minimal disc diagram 
is contained in a unique arc, hence in a unique maximal piece. 

Let $p$ be a natural number. A $2$-complex $X$ is \emph{$\mathrm C(p)$-complex} 
(or it satisfies \emph{$\mathrm C(p)$-condition}) if the boundary path of each $2$-cell 
cannot  be expressed as a concatenation of fewer than $p$ pieces in $X$.

\subsection{Spurs and shells}\label{shellsandspurs}
Let $D$ be a disc diagram. A \emph{$k$-shell} of $D$ is a 2-cell $C\to D$ whose 
boundary path $\partial C\to D$ is the concatenation $P_0P_1\cdots P_k$  for some 
$k\geq0$ where $P_0$ is a boundary arc in $D$ and $P_1,\dots, P_k$ are nontrivial 
internal arcs in $D$. The concatenation $P_1\cdots P_k$ is called the \emph{inner path} 
of $C$. The value $k$ is called the \emph{degree} of $C$. A \emph{spur} in $D$ is a 
vertex of valence one in $D$. See Figure~\ref{shells}.

\begin{figure}[h]\centering\includegraphics{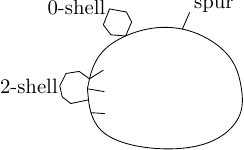}\caption{Spur, $2$-shell and $0$-shell.}\label{shells}\end{figure}

A cell $C$ in $D$ is called a \emph{disconnecting} cell, if $D-C$ is not connected.

\subsection{Ladders}\label{laddersection}
A \emph{ladder} is a disc diagram $L$ consisting of a sequence of 2-cells and/or 
vertices $C_1,C_2,\dots, C_n$ ($n\geq 2$) and edges joining them in the following way:
\begin{itemize}\item if $n=2$ one of the following holds:
\begin{itemize}\item $L=C_1\cup_P C_2$, where $C_1,C_2$ are $2$-cells and 
$P\to C_i$ is an arc for $i=1,2$,
\item $L$ consists of $C_1, C_2$ and an edge $e$ such that $e\cap C_1, e\cap C_2$ 
are two endpoints of $e$,\end{itemize}
\item if $n>2$ for every $1<i<n$ there are exactly two connected components $L', L''$ 
of $L-C_i$ and subdiagrams $L'\cup C_i, L''\cup C_i\subset L$ are both ladders.\end{itemize}
Cells $C_1$ and $C_n$ are called \emph{end-cells}. See Figure~\ref{ladders}.

\begin{figure}[h]\centering\includegraphics{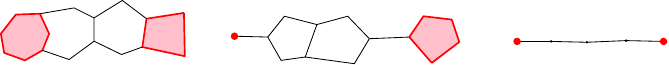}\caption{Examples of ladders. End-cells are marked.}\label{ladders}\end{figure}

\subsection{$n$-Greendlinger Condition}
We say that a disc diagram $D$ satisfies \emph{$n$-Greendlinger Condition} if one of the following holds:\begin{itemize}
\item $D$ is a single cell,
\item $D$ is a ladder, or
\item $D$ has at least three spurs and/or shells of degree $\leq n$, called \emph{exposed cells}.
\end{itemize}

\section{Fundamental theorem of classical small cancellation}
The aim of this section is to give a proof of Theorem \ref{0.1}. First we state and prove 
two lemmas, which will be useful in this proof, as well as later in the case of 
non-positively curved cube complexes and diagrams in cubical small cancellation 
complexes. Then we proceed with the proof of Theorem \ref{0.1}.

\begin{lem}\label{nodisconnect} 
Let $D$ be a disc diagram without disconnecting cells. Then 
either $\text{Int}_D$ is a nontrivial disc diagram, or $D$ consists of at most two cells. 
If $D$ is minimal then so is $\text{Int}_D$.
\end{lem}
\begin{proof} 
The embedding of $\text{Int}_D$ in the plane is induced by the embedding of $D$.
 First, suppose that $D$ has a trivial internal subdiagram. We will show that $D$ has 
 a disconnecting cell or consists of $\leq2$ cells. Suppose $D$ is not a single vertex. 
 Let $b:S^1 \to D$ denote the boundary path of $D$. If $b$ is not an embedding, 
 then either $D$ is a single 1-cell, or there exists vertex $v$ such that $|b^{-1}(v)|>1$. 
 But then $v$ is disconnecting, which contradicts the assumption that $D$ has no 
 disconnecting cells. Suppose $b$ is an embedding. If all the vertices in $\partial D$ 
 have valence two, then $D$ is a single $2$-cell. Suppose that there is a vertex 
 $v\in\partial D$ of valence $\geq 3$ and denote by $P$ an internal arc in $D$ 
 starting at $v$. Since $D$ has a trivial internal subdiagram, the other endpoint of $P$ 
 also lies in $\partial D$. There are two $2$-cells $C_1, C_2$ containing $P$. Observe 
 that $D-P$ is not connected. If there are any $2$-cells in $D$ other than $C_1,C_2$, 
 then one of $C_1,C_2$ is disconnecting, a contradiction. Thus if $D$ has more than 
 two cells then $\text{Int}_D$ is nontrivial.

Now let us prove that in this case $\text{Int}_D$ is compact and contractible. Let 
$H:D\times I\to D$ be a homotopy between $H_0 = id_D$ and a constant map 
$H_1 = p_{x}$ mapping $D$ to $x\in D$ which exists since $D$ is contractible. There is 
a well-defined retraction $r:D\to \text{Int}_D$ mapping each internal arc $P$ such that 
$P\cap\partial D\neq\emptyset$ to its endpoint contained in $\text{Int}_D$ (such an endpoint 
exists since $P$ is not disconnecting) and projecting each boundary $2$-cell $C$ onto 
$C\cap \text{Int}_D$. See Figure~\ref{retraction}.
\begin{figure}[h]\centering\includegraphics{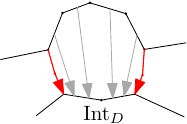}\caption{Retraction $D\to \text{Int}_D$.}\label{retraction}\end{figure}
We have $r\circ \iota = id_{\text{Int}_D}$ where $\iota:\text{Int}_D\to D$ is the inclusion. 
Then $r\circ H\circ \iota:\text{Int}_D\times I \to \text{Int}_D$ is a homotopy between 
$r\circ id_D \circ \iota = id_{\text{Int}_D}$ and a constant map $r \circ p_x \circ \iota = p_{r(x)}$. 
Thus $\text{Int}_D$ is contractible.
Finally, $\text{Int}_D$ is compact since it is the image of the compact space $D$ under the continuous map $r$. 

The minimality of $\text{Int}_D$ assuming the minimality of $D$ is immediate.

\end{proof}

We write $D=D_1\cup_C D_2$ if $D=D_1\cup D_2$ and $C = D_1\cap D_2$. 

\begin{lem}\label{pushout}
Let $D=D_1\cup_C D_2$ be a disc diagram where $D_1,D_2$ are disc diagrams and 
$C$ is a single cell.  If $D_1$ and $D_2$ satisfy $n$-Greendlinger Condition, 
then so does $D$. \end{lem}
\begin{proof} Suppose $C$ is a $1$-cell. If for $i=1,2$ the disc diagram $D_i$ is a single 
$2$-cell and $C\subset D_i$ then $D$ is a ladder. Otherwise either $C$ contains a $0$-cell 
which is disconnecting in $D$ or $C$ is contained in a $2$-cell which is disconnecting in $D$. 
Thus it suffices to consider cases where $C$ is a $0$-cell or a $2$-cell. If one of $D_1, D_2$ 
consists only of $C$, then there is nothing to show, so we assume that $C\subsetneq D_i$ 
for $i=1,2$. Observe that $C$ is a boundary cell in $D_i$ for $i=1,2$, because otherwise 
$D$ would not embed in the plane. If one of $D_1$ and $D_2$ has at least three exposed cells, 
say $D_1$, then at least two exposed cells of $D_1$ remain exposed in $D$, since we glue along 
a single cell. Also, at least one of exposed cells of $D_2$ remains an exposed cell of $D$. 
See Figure~\ref{pushoutfig}. 
\begin{figure}[h]\centering\includegraphics{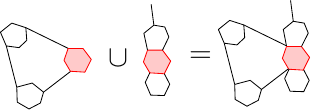}\caption{We glue disc diagram with 
three exposed cells and ladder along the marked cell.}\label{pushoutfig}\end{figure}
If $D_1$ and $D_2$ are both ladders or single cells, then one of the following holds:
\begin{itemize}\item the diagram $D$ is a ladder, if for both $i=1,2$ either $D_i$ is 
a single $2$-cell or $C$ is contained in an end-cell of $D_i$,
\item the diagram $D$ has three exposed cells, otherwise. To see this suppose that 
$D_1$ is a ladder and $C$ is not contained in any end-cell of $D_1$. Then the end-cells 
of $D_1$ remain exposed in $D$ and there is at least one end-cell of $D_2$ which remains 
exposed in $D$.
\end{itemize}
\end{proof}

\begin{thm}\label{classical}Let $X$ be a $\mathrm C(6)$-complex and $D\to X$ a minimal disc diagram. 
Then $D$ satisfies $3$-Greendlinger Condition.\end{thm}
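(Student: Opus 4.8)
The plan is to argue by induction on $\mathrm{area}(D)$, reducing to the situation handled by Lemmas~\ref{nodisconnect} and~\ref{pushout}. The base cases are immediate: if $D$ is a single vertex or single cell, the $3$-Greendlinger Condition holds trivially. So assume $D$ has positive area and at least two cells. First I would dispose of the case in which $D$ has a disconnecting cell $C$: then $D = D_1 \cup_C D_2$ with both $D_i$ strictly smaller minimal disc diagrams (minimality is inherited by subdiagrams, since a smaller-area fill of a piece of the boundary would glue up to a smaller-area fill of $P$), so by the inductive hypothesis each $D_i$ satisfies the $3$-Greendlinger Condition, and Lemma~\ref{pushout} transfers this to $D$. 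A disconnecting \emph{arc} (as opposed to a single cell) can be handled the same way, cutting along the arc; alternatively one observes a disconnecting arc forces a disconnecting vertex at one of its endpoints.

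Thus we may assume $D$ has no disconnecting cell. By Lemma~\ref{nodisconnect}, either $D$ has at most two cells — in which case $D$ is either a single cell or a union of two cells, hence a ladder, and we are done — or $\mathrm{Int}_D$ is nonempty and connected. I would then run the standard counting argument on the pair $(D, \mathrm{Int}_D)$: assign to each boundary cell $C$ its \emph{exterior degree}, the number of internal arcs on $\partial C$ (equivalently the number of arcs of $\mathrm{Int}_D$ meeting $\overline{C}$, for a shell of degree $k$ this is essentially $k$), and to each boundary vertex a defect measuring how far its valence is from $2$. The $C(6)$ hypothesis says every cell's boundary is a concatenation of at least $6$ pieces, i.e.\ at least $6$ arcs; combined with the planarity/Euler-characteristic constraint $\chi(D)=1$ applied to the CW structure of $D$ (or of $\mathrm{Int}_D$ together with the collar), this yields the inequality that the total number of exposed cells (spurs and boundary cells of exterior degree $\le 3$) is at least $3$, \emph{unless} the combinatorics degenerates. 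The degenerate case is precisely when $\mathrm{Int}_D$ is a single arc or a single vertex with no branching, which forces $D$ to be a ladder.

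Concretely, the dichotomy I would set up is: either (i) $\mathrm{Int}_D$ is homeomorphic to an interval (a single arc) or a point, in which case I claim $D$ is a ladder — proved by peeling off the end-cells one at a time and invoking the recursive definition of ladder in Section~\ref{laddersection} — or (ii) $\mathrm{Int}_D$ has a vertex of valence $\ge 3$ or is a more complicated graph, in which case the Euler-characteristic count forces at least three exposed cells. For the count, the cleanest route is to consider the planar graph $\Gamma$ whose vertices are the branch vertices of $\mathrm{Int}_D$ together with boundary vertices of valence $\ge 3$, and whose edges are the arcs; one writes $\chi = V - E + F = 1$ for the disc, estimates $E \ge 3V$-type bounds from the fact that interior vertices have valence $\ge 3$ and each $2$-cell contributes $\ge 6$ sides, and extracts that the number of faces of exterior degree $\le 3$ plus the number of spurs is $\ge 3$. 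This is the classical combinatorial Gauss–Bonnet computation, only phrased via Euler characteristic rather than curvature.

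The main obstacle is the bookkeeping in case (ii): correctly accounting for semi-internal edges and arcs, for boundary cells that touch $\partial D$ in more than one arc, and for shells whose inner path wraps in complicated ways, so that the inequality $\#\{\text{exposed cells}\} \ge 3$ comes out cleanly rather than $\ge 1$. The ladder identification in case (i) also needs care: one must check that "$\mathrm{Int}_D$ is an arc" genuinely matches the recursive ladder definition and that no extra spurs or branchings hide along $\partial D$ (if they did, $D$ would not be a ladder but would instead have $\ge 3$ exposed cells, so this is really a sub-case of (ii)). I would organize the writeup so that the induction, the disconnecting-cell reduction via Lemma~\ref{pushout}, and the Lemma~\ref{nodisconnect} dichotomy do the structural heavy lifting, leaving only a single self-contained Euler-characteristic estimate to prove the key inequality.
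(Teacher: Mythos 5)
Your structural reductions match the paper's: induction, the disconnecting-cell case via Lemma~\ref{pushout}, and the dichotomy of Lemma~\ref{nodisconnect}. But at the key step you diverge, and into a gap. Where the paper observes that $\text{Int}_D$ is itself a minimal disc diagram with fewer cells, applies the induction hypothesis \emph{to} $\text{Int}_D$, and then does a case analysis on what $\text{Int}_D$ can be, you instead invoke an Euler-characteristic/Gauss--Bonnet count --- which is exactly the classical Greendlinger/Lyndon/Wise argument that this paper is explicitly written to avoid (see the Introduction). Choosing the classical route would be legitimate if you carried it out, but you never do: the entire content of the theorem is the inequality $\#\{\text{exposed cells}\}\geq 3$, and your proposal both asserts it and concedes that ``the bookkeeping in case (ii)'' is the main obstacle. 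As written, this is a plan for the classical proof with its central estimate left unproved, not an alternative argument.

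There is also a concrete error in your dichotomy. Under your standing assumptions (no disconnecting cells, at least three $2$-cells), the conclusion of case (i) is false: if $\text{Int}_D$ is a single vertex or a single arc, $D$ is \emph{not} a ladder. A ladder with $n\geq 3$ cells always has disconnecting cells, so ladders are already absorbed by the Lemma~\ref{pushout} reduction; the remaining analysis must always produce three exposed cells. Indeed, in the paper: if $\text{Int}_D$ is a single vertex $v$, then $v$ has valence $\geq 3$ in $D$ and there are at least three $2$-shells; if $\text{Int}_D$ is a path graph $P_1\cdots P_k$, there is a $2$-shell at each endpoint and, since at most one $2$-shell attached to $P_1$ is also attached to $P_2$, a third exposed cell. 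Attempting to ``peel off end-cells'' to exhibit $D$ as a ladder in these cases would fail. A smaller inaccuracy: a disconnecting internal arc does not force a disconnecting vertex at one of its endpoints (two $2$-cells sharing an arc with both endpoints on $\partial D$ is a counterexample); the paper handles this configuration inside the proof of Lemma~\ref{nodisconnect} rather than by cutting at a vertex.
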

\begin{proof}
We prove the theorem inductively on the number of $2$-cells. It suffices to check 
$3$-Greendlinger Condition for a disc diagram $D$ with no disconnecting cells. 
Indeed, if $D$ has a disconnecting cell, i.e.\  $D=D_1\cup_{C} D_2$, then by the induction assumption 
they both satisfy $3$-Greendlinger Condition and by Lemma $2.2$ so does $D$. If $D$ is a single cell, 
there is nothing to prove. If $D$ consists of two cells, then it is a ladder. From now on we assume 
that $D$ has no disconnecting cells and that it has $\geq3$ cells. By Lemma~\ref{nodisconnect} 
the internal subdiagram $\text{Int}_D$ is a minimal disc diagram with fewer internal cells than $D$ 
and by the induction assumption $\text{Int}_D$ satisfies $3$-Greendlinger Condition.

We now consider different cases depending on what $\text{Int}_D$ is and we show that in every case 
there are at least three exposed cells in $D$. The case of a ladder is the last one.

\begin{itemize}\item ($\text{Int}_D$ is a single vertex $v$) The valence of $v$ in $D$ is at least $3$ 
(if it was $2$, then $v$ would belong to the arc with endpoints in $\partial D$, so $v$ would not 
belong to $\text{Int}_D$), so there are at least three $2$-cells attached to $\text{Int}_D$, 
they are 2-shells in $D$.
\item ($\text{Int}_D$ is a single $2$-cell) There is a $2$-cell attached to each arc in $\text{Int}_D$, 
it is a $3$-shell of $D$. By $\mathrm C(6)$ there are at least six arcs, hence there are at least 
six $3$-shells in $D$.

\item ($\text{Int}_D$ has at least three exposed cells) Let $C$ be an exposed cell of $D$, 
if $C$ is a spur, then there is at least one $2$-shell in $D$ attached to the endpoint of $C$. 
Suppose $C$ is a shell. Let $P_1,\dots P_k$ be arcs in $D$ such that the concatenation 
$P_1\cdots P_k$ is a boundary arc of $C$ in $\text{Int}_D$. Since $C$ is an exposed cell 
in $\text{Int}_D$ and $D$ is $\mathrm C(6)$ diagram, we have $k\geq 3$. The $2$-cell 
attached to $P_2$ is a $3$-shell in $D$. See Figure~\ref{exposedcell}. Hence, 
for any exposed cell $C$ of $\text{Int}_D$ there is an exposed cell of $D$ attached to $C$ 
and to no other cell of $\text{Int}_D$.

\begin{figure}[h]\centering\includegraphics{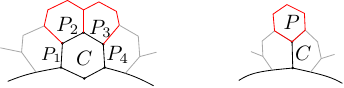}
\caption{The cell $C$ is exposed in the internal subdiagram. The cells $P_2,P_3$ on the left and $P$ on the right are exposed in $D$.}\label{exposedcell}\end{figure}

\item ($\text{Int}_D$ is a ladder) First suppose that $\text{Int}_D$ is a path graph of non-zero length. 
Let $\text{Int}_D=P_1\cdots P_k$ where $P_i$ are arcs in $D$. There is at least one $2$-shell in $D$ 
attached to each endpoint of $\text{Int}_D$. At most one of $2$-shells attached to $P_1$ is also 
attached to $P_2$, so there is one which is a $3$-shell. See Figure~\ref{internaldiagramladder}.
\begin{figure}[h]\centering\includegraphics{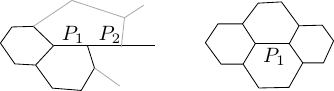}
\caption{The internal subdiagram is a path graph of length $\geq2$ and of length $1$.}\label{internaldiagramladder}\end{figure}

If $\text{Int}_D$ is not a path graph, then there is a $2$-cell in $\text{Int}_D$ which we assume now. 
Suppose one of the end-cells of $\text{Int}_D$, say $C_1$, is a $2$-cell in $\text{Int}_D$. Let 
$P_1,\dots, P_k$ be arcs in $D$ such that $P_1\cdots P_k$ is a boundary arc of $C_1$ in $\text{Int}_D$. 
There is a $3$-shell in $D$ attached to $P_i$ for each $i=2,\dots, k-1$. By $\mathrm C(6)$ 
we have $k\geq 5$, so there are at least three such $3$-shells. If both end-cells are vertices, 
then there is at least one exposed cell attached to each of them. Denote by $i$ the minimal index 
such that $C_i$ from the definition of ladder is a $2$-cell. There is a boundary arc $P=P_1\cdots P_k$ 
of $C_i$ in $\text{Int}_D$ where $P_1\dots, P_k$ are arcs in $D$. As before there is a $3$-shell in $D$ 
attached to $P_i$ for each $i=2,\dots, k-1$. By $\mathrm C(6)$ we have $k\geq 3$, so there are 
at least one such $3$-shell and in total there are at least three $3$-shells in $D$.

\end{itemize}
\end{proof}

\section{Non-positively curved cube complexes}
In this section we give a brief exposition of cube complexes, following \cite{special} or \cite{hierarchy} and prove Theorem \ref{0.3}. The approach applied here will be later adapted in the proof of the main theorem.

\subsection{Non-positively curved cube complexes}
A \emph{link} of a vertex $v$ in a cube complex $X$ is a complex whose vertices correspond 
to oriented edges incident to $v$ and there is an $n$-simplex spanned on a collection of vertices 
whenever there is an $(n+1)$-cube $C\to X$ such that corresponding edges in $X$ are images 
of faces of $C$ containing a vertex $\bar v$ which is mapped to $v$. It can be thought as 
an intersection of a small radius sphere around the vertex $v$ in $X$. A \emph{flag complex} 
is a simplicial complex, such that each set of vertices pairwise connected by edges spans a simplex. 
A cube complex is called \emph{non-positively curved} if all its vertex links are flag. 
A CAT(0) \emph{cube complex} is a simply connected, non-positively curved cube complex. 

Let $X$ be a non-positively curved cube complex and $D\to X$ a disc diagram in $X$ 
consisting of a three squares incident to a vertex $v$  that are pairwise intersecting along one edge. 
Since $X$ is non-positively curved, there exists the disc diagram $D'\to X$ with 
$\partial D=\partial D'$, such that $D\cup_{\partial D=\partial D'}D'$ is the $2$-skeleton of 
a $3$-dimensional cube $Q\to X$. The replacement of $D$ by $D'$ is called a \emph{hexagon move}. 
See Figure~\ref{hexagonmove}. For every square $C\to D$ there exists the unique square 
$\hat C\to D'$ such that $C\cap\hat C=\emptyset$ in $Q$. Such $\hat C$ is called \emph{opposite} to $C$. 

\begin{figure}[h]\centering\includegraphics{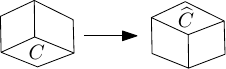}\caption{Hexagon move.}\label{hexagonmove}
\end{figure}

\subsection{Corners}\label{seccorners}
Let $D\to X$ be a disc diagram in a cube complex $X$. A boundary vertex $v$ of valence $2$ 
contained in some square $C$ in $D$ is a \emph{corner}. 
The square $C$ is called a \emph{corner-square}. See Figure~\ref{corner}. Note that 
a corner-square may contain more than one corner.
 
\begin{figure}[h]\centering\includegraphics{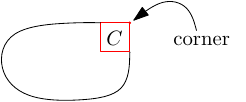}\caption{Corner-square $C$.}\label{corner}
\end{figure}

\subsection{Hyperplanes}\label{sechyperplanes}
Let $X$ be a cube complex. A \emph{midcube} is a subspace of a cube $[-1,1]^n$ obtained 
by restricting one coordinate to $0$. A midcube of an edge is called a \emph{midpoint}. 
Let $H$ be a new cube complex whose cubes are midcubes of $X$ and attaching maps 
are restrictions of attaching maps in $X$ to midcubes. A connected component $\Gamma$ 
of $H$ is called an \emph{immersed hyperplane}. 

There is a natural immersion $\Gamma \to X$ and we will often think of hyperplanes as 
subspaces of $X$. See Figure~\ref{hyperplanes}. An immersed hyperplane $\Gamma$ 
is said to be \emph{dual} to an edge $e$ if a midpoint of $e$ is a vertex of $\Gamma$. 
The immersed hyperplane dual to $e$ is denoted by $\Gamma(e)$. We say that edges 
$e,e'$ are \emph{parallel} if $\Gamma(e)=\Gamma(e')$.
\begin{figure}[h]\centering\includegraphics{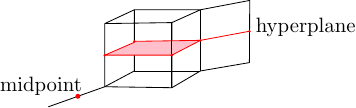}\caption{Hyperplane.}\label{hyperplanes}
\end{figure}

The \emph{carrier} $N(\Gamma)$ of an immersed hyperplane $\Gamma$ in $X$ 
(or of a subcomplex $\Gamma$ of an immersed hyperplane) is a cube complex defined 
as follows: for each cube $C$ in $\Gamma$ we take the copy of the cube in $X$ whose 
midcube is $C$ and two such cubes are attached to each other along faces if corresponding 
midcubes are attached to each other in $\Gamma$ along midcubes of these faces. 
By the construction, we have a map $\iota:N(\Gamma)\to X$. Whenever $\iota$ is an embedding 
we write $N(\Gamma)$ instead of $\iota\big(N(\Gamma)\big)$.

The immersed hyperplanes in a squared disc diagram are immersed path graphs. 
Suppose $\Gamma$ is an immersed hyperplane in a squared disc diagram $D$ such that 
$\iota:N(\Gamma)\to D$ is an embedding. Denote by $K$ one of two connected components 
of $D-\Gamma$. We define the \emph{$\Gamma$-component} corresponding to $K$ as 
$K \cup N(\Gamma)$, i.e.\  this is the minimal subdiagram of $D$ which contains $K$. 
See Figure~\ref{components}.

 \begin{figure}[h]\centering\includegraphics{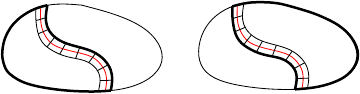}\caption{The $\Gamma$-components.}\label{components}
\end{figure}

Let $D$ be a squared disc diagram with no disconnecting cells and nontrivial internal 
subdiagram. A hyperplane $\Gamma$ is \emph{collaring}, if it is not dual to any internal 
edge in $D$. We say that $D$ is \emph{collared} if $\Gamma(e)$ is collaring for every 
semi-internal edge $e$. We say that $D$ is \emph{collared by $\{\Gamma_1,\dots, \Gamma_n\}$} 
if for every semi-internal $e$ there exists $i$ such that $\Gamma(e)=\Gamma_i$ and all 
$\Gamma_i$ are collaring.  
 \begin{figure}[h]\centering\includegraphics{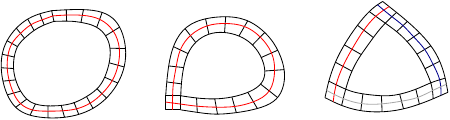}\caption{First two disc diagrams are collared by a single hyperplane and the last disc diagram is bounded by three hyperplanes.}
\end{figure}

\begin{lem}\label{collaredbyhyperplanes}
Let $D$ be a squared disc diagram with no disconnecting cells that has at least one corner. 
The following are equivalent\begin{enumerate}[$(1)$]\item $D$ is collared,
\item every hyperplane dual to an edge containing a corner of $D$ is collaring,
\item all boundary vertices of $D$ have valence $\leq3$.\end{enumerate}\end{lem}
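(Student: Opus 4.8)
The plan is to prove the cycle of implications $(1)\Rightarrow(3)\Rightarrow(2)\Rightarrow(1)$. The implication $(1)\Rightarrow(2)$ is immediate from the definitions, since every edge containing a corner is semi-internal (a corner is a boundary vertex of valence $2$ lying in a square, so at least one of its two incident edges is not a boundary edge), so $(1)\Rightarrow(2)$ can be folded into the cycle for free; I will instead run $(1)\Rightarrow(3)\Rightarrow(2)\Rightarrow(1)$ and note $(1)\Rightarrow(2)$ separately if convenient.

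First I would prove $(1)\Rightarrow(3)$ in contrapositive form: suppose some boundary vertex $v$ has valence $\geq 4$. Since $D$ has no disconnecting cells, $v$ is not a cut vertex, so a small punctured neighborhood of $v$ in $D$ is connected; the edges at $v$ are cyclically ordered, and because $v$ is a boundary vertex two consecutive edges in this order bound the boundary $2$-cell (the complementary region). Among the remaining $\geq 2$ edges, at least one, call it $e$, is strictly interior to the cyclic order, hence both squares adjacent to $e$ at $v$ lie in $D$, so $e$ is an internal edge. Then $\Gamma(e)$ is dual to an internal edge, contradicting that $D$ is collared. The one thing to be careful about here is the degenerate possibility that the two squares flanking $e$ coincide, or that $D-\Gamma(e)$ has issues with $\iota$ being an embedding; I would handle this by working locally at $v$ rather than globally, so the global embedding of $N(\Gamma(e))$ is not needed to conclude $e$ is internal.

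Next, $(3)\Rightarrow(2)$: assume all boundary vertices have valence $\leq 3$ and let $e$ be an edge containing a corner $v$. The corner $v$ has valence $2$, so its two edges, in particular $e$, are traversed by $\Gamma(e)$ only at $v$ in a controlled way; I want to show $\Gamma(e)$ is not dual to any internal edge. Suppose for contradiction $\Gamma(e)$ is dual to an internal edge $f$. Trace the hyperplane $\Gamma(e)$ as a path of midpoints from the midpoint of $e$ (which sits on $\partial D$, since $e$ contains the corner $v$ of valence $2$, so $e$ is a boundary or semi-internal edge whose far end is on $\partial D$) through squares. At the corner-square containing $v$, the hyperplane enters from $e$ and exits through the edge opposite $e$ in that square; each time it crosses a square it moves from one edge to the parallel opposite edge. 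The key point is that once $\Gamma(e)$ reaches an internal edge it must, by a counting/parity argument along the strip of squares $N(\Gamma(e))$, force some boundary vertex of $D$ to have valence $\geq 4$: the strip is a sequence of squares, it starts and ends on $\partial D$ (a hyperplane in a disc diagram is a path graph with endpoints on the boundary), and if the middle of the strip contains an edge with squares on both sides, then tracing back toward the corner one finds a boundary vertex where the strip "turns," which combined with the squares forces valence $\geq 4$. I would make this precise by induction on the length of $N(\Gamma(e))$, peeling off corner-squares.

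Finally $(2)\Rightarrow(1)$: suppose every hyperplane dual to an edge containing a corner is collaring; I must show $\Gamma(e')$ is collaring for every semi-internal edge $e'$, not just those through corners. Given a semi-internal edge $e'$, consider its hyperplane $\Gamma(e')$ and its carrier strip $N(\Gamma(e'))$, which is a strip of squares with two ends on $\partial D$. Walking along this strip from $e'$ to an end, one reaches an edge on the boundary; at the last square before exiting I claim there is a corner (a valence-$2$ boundary vertex in that square), because $D$ has no disconnecting cells and at least one corner overall, and the end-square of a hyperplane strip that terminates on $\partial D$ necessarily exhibits such a vertex unless the strip end is degenerate. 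Then $\Gamma(e')=\Gamma(e)$ for an edge $e$ of that end-square containing the corner — wait, only if $e'$ is parallel to such an $e$, which is exactly saying $e'$ is one of the edges the strip passes through, and every edge dual to $\Gamma(e')$ is parallel to the two boundary edges at the ends of the strip. So $\Gamma(e') = \Gamma(e_{\text{end}})$ where $e_{\text{end}}$ is a boundary edge at an end of the strip, and I need $e_{\text{end}}$ to contain a corner; this is where hypothesis $(2)$ and the existence of at least one corner combine with $(3)$-type local analysis.

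The main obstacle I anticipate is the careful local geometry of a hyperplane strip $N(\Gamma)$ near where it meets $\partial D$: establishing that the end-square of such a strip always contributes a corner (a valence-$2$ boundary vertex), handling degenerate ends where the strip terminates at a single edge, and making the induction in $(3)\Rightarrow(2)$ robust against the carrier map $\iota$ failing to be an embedding. I would structure the argument so that all three implications reduce to one lemma about the shape of $N(\Gamma)$ at its boundary endpoints, proved by induction on the number of squares in the strip using hexagon-free local pictures, and then the equivalences follow formally.
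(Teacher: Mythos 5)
Your proposal has a genuine gap, and it stems from the paper's definition of \emph{internal edge}: an edge $e$ is internal only if $e\cap\partial D=\emptyset$, and a hyperplane fails to be collaring only if it is dual to such an edge. In your argument for $(1)\Rightarrow(3)$ you take a boundary vertex $v$ of valence $\geq4$ and conclude that some edge $e$ at $v$ with squares on both sides ``is an internal edge.'' It is not: $e$ contains the boundary vertex $v$, so $e\cap\partial D\neq\emptyset$ and $e$ is merely semi-internal. Being dual to a semi-internal edge does not contradict collaredness, so the local picture at $v$ proves nothing; you must follow the hyperplane away from $v$ and show it eventually crosses an edge entirely disjoint from $\partial D$, which is a global argument (and is where the no-disconnecting-cells hypothesis actually gets used --- e.g.\ a fan of three squares around a valence-$4$ boundary vertex has no internal edges at all, and is ruled out only because its middle square disconnects). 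The same confusion infects your justification of $(1)\Rightarrow(2)$: the two edges at a corner are traversed by the boundary path, so they are boundary edges, not semi-internal ones; the implication is still easy, but via the opposite edge in the corner-square, not for the reason you give. Separately, your $(2)\Rightarrow(1)$ rests on the unproved claim that every hyperplane dual to a semi-internal edge also passes through an edge containing a corner; you flag this yourself, but it is the entire content of that implication and there is no evident way to get it without essentially re-proving the other two directions.

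For comparison, the paper runs the cycle the other way, $(1)\Rightarrow(2)\Rightarrow(3)\Rightarrow(1)$, which places the work where it is cheapest. $(3)\Rightarrow(1)$ is done in contrapositive by one clean local observation: if some hyperplane is dual to both a semi-internal and an internal edge, then some square has a semi-internal edge $e$ opposite an internal edge $\bar e$, and the boundary vertex of $e$ must have valence $\geq4$ since $\bar e$ carries no boundary vertex. $(2)\Rightarrow(3)$ is done in contrapositive by taking a minimal boundary subpath between corners, locating the first vertex of valence $>3$ on it, and tracking the hyperplane dual to the corner's off-path edge along the resulting strip until the valence jump forces it into an internal edge. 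I would redo your proof along these lines; as written, the two nontrivial implications in your cycle are the ones that do not close.
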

\begin{proof}\item
$(1)\Rightarrow(2)$ This implication is trivial.\\
$(2)\Rightarrow(3)$ If a boundary vertex $v$ has valence $>3$, then no hyperplane dual to 
a semi-internal edge containing $v$ is collaring. Let $P$ be a minimal subpath of $\partial D$ 
with corners of $D$ as endpoints and denote by $v_1, \dots, v_n$ the consecutive vertices of $P$. 
Let $1<i<n$ be the minimal number such that valence of $v_i$ in $D$ is $>3$. Let $e$ be the edge 
in $D$ that contains $v_1$ but is not contained in $P$. The hyperplane $\Gamma(e)$ is not collaring. 
See Figure~\ref{4valence}.
 \begin{figure}[h]\centering\includegraphics{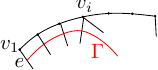}\caption{The hyperplane $\Gamma(e)$ dual to 
 an edge $e$ containing corner $v_1$ is not collaring.}\label{4valence}
\end{figure}\\
$(3)\Rightarrow(1)$ If $D$ is not collared, then there exists an immersed hyperplane dual to 
some internal edge and some semi-internal edge in $D$. There exists a square $C$ in $D$ 
such that one of its edges $e$ is semi-internal and the opposite one $\bar e$ is internal. 
Denote by $v$ the boundary vertex contained in $e$. The valence of $v$ is $\geq4$, 
because otherwise $\bar e$ would contain a boundary vertex.
\end{proof}

\subsection{Disc diagrams in non-positively curved cube complexes}

\begin{thm}\label{cubes}Let $X$ be a non-positively curved cube complex and $D\to X$ a minimal disc diagram. Then $D$ is a path graph or it has at least three corners and/or spurs.\end{thm}

\begin{proof}
\begin{stepp}It suffices to verify $2$-Greendlinger Condition. \end{stepp}
Indeed,
\begin{itemize}
\item if $D$ is a single 0-cell or a ladder consisting only of 1-cells, then $D$ is a path graph,
\item if $D$ is a single square or a ladder with at least one 2-cell or $D$ has at least three 
shells of degree $\leq2$ and/or spurs, then $D$ has at least three corners and/or spurs. 
To see that note that shells of degree $\leq2$ in $D$ are corner-squares.\end{itemize} 

\noindent We show that $2$-Greendlinger Condition is satisfied by induction on the number of cells. 

\begin{stepp}
All cells in $D$ are embedded and that the intersection of two cells consists of exactly one cell. 
\end{stepp}
Since $X$ is non-positively curved, no square in $D$ has two consecutive edges glued, 
because otherwise on of the vertex links in $X$ would contain a loop. Similarly if 
there are two squares with $\geq2$ consecutive common edges in $D$, then since 
$X$ is non-positively curved they are mapped to the same square in $X$, thus $D$ 
is not minimal. See Figure~\ref{twosq}. 
\begin{figure}[h]\centering\includegraphics{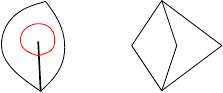}\caption{These cannot be minimal disc diagrams in a non-positively curved cube complex.}\label{twosq}\end{figure}
Suppose that $S$ is a non-embedded square in $D$. Let $P\to\partial S$ be a minimal subpath 
whose endpoints are mapped to the same point in $D$ such that $S$ is not contained in 
the subdiagram $D'$ of $D$ bounded by $P$. See the left diagram in Figure~\ref{notembeddedsquare}. 
\begin{figure}[h]\centering\includegraphics{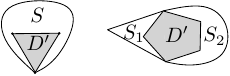}\caption{Subdiagram $D'$.}\label{notembeddedsquare}\end{figure}
By the minimality of $P$ the diagram $D'$ has no spurs. Since two squares cannot have 
two consecutive edges in common, the only possible corner of $D'$ is the endpoint of $P$. 
Thus $D'$ contradicts the induction assumption. Now suppose that there are two cells 
$S_1,S_2$ in $D$ whose intersection is not connected. Let $P_1\to\partial S_1,P_2\to\partial S_2$ 
be minimal subpaths with common endpoints in $D$ such that none of $S_1, S_2$ is 
contained in the subdiagram $D'$ of $D$ bounded the concatenation of $P_1$ and $P_2$. 
See the right diagram in Figure~\ref{notembeddedsquare}. As before there are no spurs in $D'$ 
and there are at most two corners, so $D'$ contradicts the induction assumption. Similarly 
all edges are embedded and every two edges have at most one common vertex.

\begin{stepp}It suffices to verify $2$-Greendlinger Condition for $D\to X$ that has no disconnecting cells.
\end{stepp}
It follows from Lemma~\ref{pushout}. If $D$ is a single cell, there is nothing to prove. If $D$ 
consists of two cells, then it contains a disconnecting cell. Thus we can restrict our attention 
to diagrams with $\geq3$ squares. In such case we need to show that $D$ has $\geq 3$ corner-squares.

\begin{stepp}From now on, we assume that $D$ has no disconnecting cells. The carriers 
of immersed hyperplanes in $D$ embed. \end{stepp} Suppose to the contrary that $\Gamma$ 
is an immersed hyperplane such that $N(\Gamma)$ does not embed. Let $\Gamma'$ be 
a minimal subpath of $\Gamma$ such that $N(\Gamma')$ does not embed. See Figure~\ref{notembedded}.
\begin{figure}[h]\centering\includegraphics{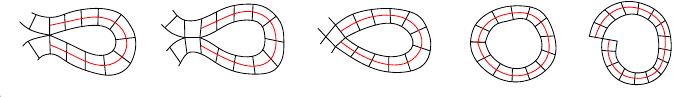}\caption{The hyperplane $\Gamma$ where $N(\Gamma)$ does not embed.}\label{notembedded}\end{figure}
Denote by $D_{\Gamma'}$ the minimal disc diagram that contains $N(\Gamma')$ and 
the \emph{internal} connected component of $D-N(\Gamma')$, i.e.\  the unique component 
which has trivial intersection with $\partial D$. One of the following holds:\begin{itemize}
\item The diagram $D_{\Gamma'}$ is a proper subdiagram of $D$, so $D_{\Gamma'}$ has 
fewer cells than $D$. But $D'$ has at most two corner-squares (images of end-cells of 
$N(\Gamma')$ possibly) and no spurs, hence we get a contradiction with the induction assumption.
\item We have $D_{\Gamma'}=D$. Set $S$ to be any square in $N(\Gamma')$ that is not 
a corner-square and let $\Gamma_S$ be the immersed hyperplane dual to the unique 
boundary edge of $S$. See Figure~\ref{notembeddedcarrier}. 
\begin{figure}[h]\centering\includegraphics{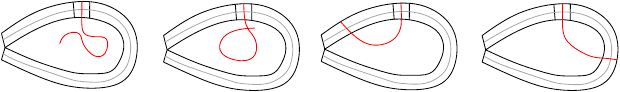}\caption{The diagram $D = D_{\Gamma'}$ and hyperplane $\Gamma_S$.}\label{notembeddedcarrier}\end{figure}
If $N(\Gamma_S)$ is not embedded, then we proceed as in the previous case and get 
a proper subdiagram $D_{\Gamma_S}$ of $D$ with at most two corner squares and no spurs, 
a contradiction. Thus $N(\Gamma_S)$ embeds. One of $\Gamma_S$-components has 
$2$ exposed squares and fewer cells than $D$, a contradiction with the induction assumption. \end{itemize}
Thus carriers of hyperplanes embed in $D$.

\begin{stepp}The diagram $D$ satisfies $2$-Greendlinger condition.\end{stepp}
Suppose, contrary to $2$-Greendlinger Condition, that $D$ has at most two corner-squares. 
We now show that in this case $D$ is collared. If $D$ had no corner-squares, then any 
$\Gamma(e)$-component, for any boundary edge $e$ would have $\leq2$ corner-squares, 
no spurs  and obviously less cells than $D$, which would contradict the induction assumption. 
Thus $D$ has some corner-squares. By Lemma~\ref{collaredbyhyperplanes} it suffices to verify 
that all hyperplanes dual to edges containing corners are collaring. Let $e'$ be such an edge. 
If $\Gamma(e')$ was not collaring then one of $\Gamma(e')$-components would have at most 
two corner-squares, no spurs and fewer cells than $D$, which is impossible by the induction assumption. 
See Figure~\ref{bigonss}. Thus $D$ is collared.

\begin{figure}[h]\centering\includegraphics{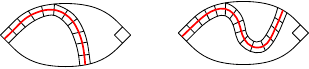}\caption{The hyperplane $\Gamma(e')$. }\label{bigonss}\end{figure}

By Lemma~\ref{nodisconnect} the internal subdiagram $\text{Int}_D$ is a minimal disc diagram, 
so by the induction assumption it satisfies $2$-Greendlinger Condition and therefore it is a path graph, 
or it has at least three corners and/or spurs. Let us consider these two cases separately.
\begin{itemize}
\item ($\text{Int}_D$ has at least three corners and/or spurs) There is a corner-square of $D$ 
attached to each spur of $\text{Int}_D$, since valence in $D$ of a spur of $\text{Int}_D$ is $\geq3$. 
Suppose that there is a corner $v$ in $\text{Int}_D$, let $S$ be a square in $\text{Int}_D$ containing $v$. 
If the valence of $v$ in $D$ is $\geq4$ then there is a corner-square of $D$ containing $v$. 
See Figure~\ref{interiorinsquarediagram}. If the valence is 3, then by a hexagon move 
applied to squares containing $v$ we obtain a disc diagram $D'$ with the same number of cells as $D$ 
and $\partial D=\partial D'$. Since $D$ is collared, by Lemma~\ref{collaredbyhyperplanes} 
all boundary vertices in $D$ have valence $\leq3$, so square $\widehat S$ opposite to $S$ 
is a corner-square of $D'$ and the diagram $\overline{D'- \widehat S}$ has $\leq2$ corners 
and no spurs, a contradiction. See Figure~\ref{interiorinsquarediagram}.

\begin{figure}[h]\centering\includegraphics{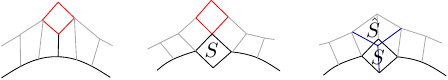}\caption{Internal diagram contains a spur, a corner whose valence in $D$ is $\geq4$, or a corner whose valence in $D$ is $3$.}\label{interiorinsquarediagram}\end{figure}

\item ($\text{Int}_D$ is a path graph) There are corner-squares $S_1,S_2$ each incident to 
one endpoint of $\text{Int}_D$. Since $D$ is collared by Lemma~\ref{collaredbyhyperplanes}, 
the diagram $D$ has no boundary vertices of valence~$>3$. Since $S_1, S_2$ are the only 
corner-squares in $D$, we have $\overline{D-(S_1\cup S_2)}=I_2\times I_n$ where $n\geq1$ 
is the length of $\text{Int}_D$. See the left diagram in Figure~\ref{bigon-after-hexagon-move}. 
Applying a hexagon move to squares containing an endpoint of $\text{Int}_D$ we obtain a diagram 
with a proper subdiagram $D'$ with two corners and no spurs, which is a contradiction and 
completes a proof. See the right diagram in Figure~\ref{bigon-after-hexagon-move}.
\begin{figure}[h]\centering\includegraphics{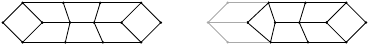}
\caption{On the left, the diagram $D$ where $\text{Int}_D$ is a path graph of length $3$. On the right, the diagram $D'$ obtained from $D$ by a hexagon move.}\label{bigon-after-hexagon-move}\end{figure}

\end{itemize}
\end{proof}

\begin{cor}\label{cubecor}If $X$ is a non-positively curved cube complex and $D\to X$ is a minimal nonsingular disc diagram, then $D$ has at least three corners.\end{cor}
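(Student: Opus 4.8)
The plan is to deduce Corollary~\ref{cubecor} directly from Theorem~\ref{cubes} by observing that a nonsingular minimal disc diagram has no spurs, and then ruling out the degenerate case where $D$ is a path graph. First I would note that if $D\to X$ is nonsingular, then $D$ is homeomorphic to a disc, so every vertex of $D$ has valence $\geq 2$; in particular $D$ has no vertex of valence one, i.e.\ no spurs. Applying Theorem~\ref{cubes}, either $D$ is a path graph or $D$ has at least three corners and/or spurs; since there are no spurs, in the latter case $D$ has at least three corners, which is exactly the conclusion we want.

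It remains to exclude the possibility that $D$ is a path graph. A path graph is a $1$-complex homeomorphic to an interval, which is not homeomorphic to a disc unless it is degenerate (a single point), and a single point is likewise not a disc. Since $D$ is assumed nonsingular, hence homeomorphic to a genuine $2$-disc, $D$ cannot be a path graph. (Strictly, one should also observe that a nonsingular diagram is nonempty as a $2$-complex, i.e.\ contains at least one square; this is implicit in ``homeomorphic to a disc''.) Therefore the first alternative of Theorem~\ref{cubes} does not occur, and $D$ has at least three corners.

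I do not expect any real obstacle here: the corollary is a routine specialization of the theorem, and the only thing to be careful about is making sure the two exceptional outcomes of Theorem~\ref{cubes} — ``$D$ is a path graph'' and ``$D$ has spurs'' — are both incompatible with nonsingularity, which follows immediately from the definition of a nonsingular disc diagram as one homeomorphic to a disc.

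\begin{proof}
Since $D$ is nonsingular, it is homeomorphic to a disc. In particular $D$ is a genuine $2$-complex, so it is not a path graph, and every vertex of $D$ has valence at least $2$, so $D$ has no spur. By Theorem~\ref{cubes}, $D$ is a path graph or it has at least three corners and/or spurs. The first alternative is excluded, and in the second alternative, as $D$ has no spurs, $D$ must have at least three corners.
\end{proof}
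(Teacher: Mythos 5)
Your proof is correct and is exactly the deduction the paper intends: Corollary~\ref{cubecor} is stated without proof as an immediate consequence of Theorem~\ref{cubes}, and your observation that a diagram homeomorphic to a disc can be neither a path graph nor contain a spur is precisely what makes it immediate. No issues.
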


\subsection{Convexity}
Let $X$ be a CAT(0) cube complex. A subcomplex $Y\subset X$ is \emph{convex} if 
for any vertices $v, v'\in Y$ every combinatorial path $P\to X$ of minimal length with endpoints 
$v$ and $v'$ is contained in $Y$. A combinatorial immersion $\phi:Y\to X$ of cube complexes 
is called a \emph{local isometry} provided that for any pair of edges $e,e'$ incident to 
a vertex $v$ in $X$, if the vertices in link of $\phi(v)$ corresponding to $\phi(e),\phi(e')$ are adjacent, 
then the vertices corresponding to $e,e'$ in the link of $v$ are also adjacent. Note that, 
if $X$ is a non-positively curved cube complex and $\Gamma$ is an immersed hyperplane in $X$, 
then $\iota:N(\Gamma)\to X$ is a local isometry.

\begin{lem}\label{convex}Let $X,Y$ be non-positively curved cube complexes and let $\phi:Y\to X$ 
be a local isometry. Then the lift $\widetilde\phi:\widetilde Y\to\widetilde X$ is an embedding and 
the image is a convex subcomplex of $\widetilde X$, where $\widetilde X,\widetilde Y$ are 
universal covers of $X,Y$.\end{lem}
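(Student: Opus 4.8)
The plan is to reduce the statement to the disc-diagram Greendlinger machinery of Theorem~\ref{cubes}, carried out inside the CAT(0) cube complex $\widetilde X$. First I would produce the lift: the composite $\widetilde Y\to Y\xrightarrow{\phi}X$ lifts, since $\widetilde Y$ is simply connected, to a map $\widetilde\phi:\widetilde Y\to\widetilde X$ covering $\phi$. Because the local isometry condition is a condition on links and links are preserved by the covering projections, $\widetilde\phi$ is again a combinatorial local isometry, and both $\widetilde X,\widetilde Y$ are CAT(0) cube complexes. It then suffices to show that $\widetilde\phi$ is injective on vertices and that its image is convex; concretely, that for all vertices $u,v$ of $\widetilde Y$ one has $d_{\widetilde X}(\widetilde\phi u,\widetilde\phi v)=d_{\widetilde Y}(u,v)$ and that every minimal-length combinatorial path in $\widetilde X$ between $\widetilde\phi u$ and $\widetilde\phi v$ is the $\widetilde\phi$-image of such a path in $\widetilde Y$.

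Next I would set up the diagram. Fix vertices $u,v$, a geodesic $\sigma$ in $\widetilde X$ from $\widetilde\phi u$ to $\widetilde\phi v$, and a geodesic $\tau$ in $\widetilde Y$ from $u$ to $v$. Since $\ell(\widetilde\phi\tau)=\ell(\tau)$ and $\widetilde X$ is simply connected, $\widetilde\phi\tau$ and $\sigma$ cobound a disc diagram $D\to\widetilde X$ with boundary path $(\widetilde\phi\tau)\bar\sigma$. Among all choices of $\tau$, $\sigma$, and all such disc diagrams, I would fix $(D,\tau,\sigma)$ of minimal area. The claim is that $\mathrm{area}(D)=0$; granting this, $\widetilde\phi\tau=\sigma$, which simultaneously yields the distance equality (taking $\widetilde\phi u=\widetilde\phi v$ forces $\tau$ trivial, hence $u=v$, giving injectivity) and convexity (every such $\sigma$ is an $\widetilde\phi$-image).

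To rule out positive area I would apply Theorem~\ref{cubes} to $D$. First, $D$ has no spurs: a spur is a backtrack of the boundary path, which is impossible on $\sigma$ since it is geodesic, and impossible on $\widetilde\phi\tau$ since $\widetilde\phi$ is locally injective and $\tau$ does not backtrack (a backtrack of $\widetilde\phi\tau$ would force, by injectivity on the star of the middle vertex, a backtrack of $\tau$); the two junction vertices contribute no spur after passing to minimal area, as a shared terminal edge could be cancelled to shorten the data. Hence either $D$ is a path graph, forcing $\mathrm{area}(D)=0$, or $D$ has at least three corners. In the latter case at most two corners lie at the endpoints $\widetilde\phi u,\widetilde\phi v$, so some corner lies in the interior of $\sigma$ or of $\widetilde\phi\tau$. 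A corner in the interior of $\sigma$ carries a corner-square that can be pushed across, replacing $\sigma$ by a geodesic $\sigma'$ of the same length and deleting one square from $D$, contradicting minimal area. A corner in the interior of $\widetilde\phi\tau$ is a place where two consecutive edges $\widetilde\phi f_1,\widetilde\phi f_2$ span a square of $\widetilde X$; here the local isometry property forces $f_1,f_2$ to span a square $Q$ of $\widetilde Y$, so $\tau$ may be pushed across $Q$ to a geodesic $\tau'$ while the corresponding corner-square is deleted from $D$, again lowering the area. Either way we contradict minimality, so $\mathrm{area}(D)=0$.

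The heart of the argument, and the step I expect to require the most care, is this use of the local isometry hypothesis to promote a corner-square of $D$ sitting on $\widetilde\phi\tau$ to an honest square of $\widetilde Y$, and then to check that pushing $\tau$ across it corresponds to deleting exactly one square of $D$ while keeping $\tau'$ geodesic. Alongside this, I would need the standard facts that minimal-length combinatorial paths in a CAT(0) cube complex are embedded and cross each hyperplane at most once (so that a corner-square on the $\sigma$-side genuinely yields an area-reducing push), and that the minimization can be taken jointly over the data $(D,\tau,\sigma)$ so that every interior corner leads to a strict decrease.
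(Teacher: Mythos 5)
Your overall strategy is the same as the paper's: span a disc diagram $D$ in $\widetilde X$ between $\widetilde\phi(\tau)$ and $\sigma$, apply Theorem~\ref{cubes}, rule out interior spurs using geodesicity and local injectivity, and remove interior corner-squares (lifting the square to $\widetilde Y$ via the local isometry when the corner sits on $\widetilde\phi(\tau)$). However, there is a genuine gap in how you quantify over $\sigma$. You minimize the area jointly over $(D,\tau,\sigma)$ and, when a corner lies in the interior of $\sigma$, you replace $\sigma$ by a different geodesic $\sigma'$ to contradict minimality. That argument only shows that the area-minimizing triple has area zero, i.e.\ that \emph{some} geodesic from $\widetilde\phi(u)$ to $\widetilde\phi(v)$ is an $\widetilde\phi$-image. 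Convexity requires this for \emph{every} geodesic $\sigma$, and your parenthetical ``every such $\sigma$ is an $\widetilde\phi$-image'' does not follow from the joint minimization: once $\sigma$ is allowed to vary you lose the universal quantifier, and if instead $\sigma$ is held fixed then pushing the corner across changes the boundary path and no longer contradicts anything.

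The repair is exactly the extra half-step in the paper's proof. Fix $\sigma=\gamma$ and induct on the minimal area of a diagram with boundary $\widetilde\phi(\beta)\bar\gamma$ over pairs $(\beta,\gamma)$, with inductive conclusion ``there is a geodesic $\alpha$ in $\widetilde Y$ with $\widetilde\phi(\alpha)=\gamma$.'' When the interior corner lies on $\gamma$, replace $e_1e_2$ by $\bar e_4\bar e_3$ to get $\gamma'$, obtain $\alpha'$ with $\widetilde\phi(\alpha')=\gamma'$ from the induction hypothesis, and then use the local isometry once more to lift the corner-square to a square of $\widetilde Y$ containing the corresponding edges of $\alpha'$, pushing $\alpha'$ back across it to produce $\alpha$ with $\widetilde\phi(\alpha)=\gamma$. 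You already have this square-lifting tool in your treatment of corners on the $\widetilde\phi(\tau)$ side; it just needs to be applied a second time, after rather than instead of the area reduction, to recover the originally given $\sigma$.
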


\begin{proof}
Let $v,v'\in \widetilde Y$ be vertices. It suffices to verify that any minimal length combinatorial path 
in $\widetilde X$ joining $\widetilde\phi(v),\widetilde\phi(v')$ is the image under $\widetilde\phi$ of 
a minimal length combinatorial path in $\widetilde Y$ joining $v,v'$. Let $D\to \widetilde X$ be 
a minimal disc diagram with boundary path $\widetilde\phi(\beta)\bar\gamma$, where $\beta$ 
is a minimal length combinatorial path in $\widetilde Y$ joining $v,v'$ and $\gamma$ is 
a minimal length combinatorial path joining $\widetilde\phi(v),\widetilde\phi(v')$ ($\bar \gamma$ 
denotes the path $\gamma$ with reversed direction). By induction on $\text{area}(D)$ over pairs 
$\beta, \gamma$ we show that there exists a minimal length combinatorial path $\alpha$ in 
$\widetilde Y$ such that $\widetilde\phi(\alpha)=\gamma$. If $\text{area}(D)=0$, then 
by Theorem~\ref{cubes}
\begin{itemize}
\item either $D$ is a path graph with endpoints $\widetilde\phi(v),\widetilde\phi(v')$, 
and what follows $\widetilde\phi(\beta)=\gamma$,
\item or there is a spur $w$ in $D$, distinct from $v,v'$. If $w\in \gamma$, 
then the length of $\gamma$ is not minimal. Otherwise, if $w\in \widetilde\phi(\beta)$, 
then since $\widetilde\phi$ is a combinatorial immersion, the length of $\beta$ is not minimal.
\end{itemize}
Now suppose that $\text{area}(D)=n>0$. By Theorem~\ref{cubes} there are at least three corners 
and/or spurs in $D$, so at least one corner or spur distinct from $v,v'$, let us denote it by $w$. 
If $w$ is a spur, we conclude as before, that the length of one of $\gamma,\beta$ is not minimal. 
Thus $w$ is a corner, denote by $S$ the square in $\widetilde X$ containing $w$. 
If $w\in\widetilde\phi(\beta)$, then since $\widetilde \phi$ is a local isometry, there is 
a square $S'$ in $\widetilde Y$ such that $\widetilde\phi(S')=S$. Let $e_1e_2e_3e_4=\partial S'$ 
such that $\phi(e_1),\phi(e_2)$ both contain $w$ and their concatenation $e_1e_2$ is a subpath 
of $\beta$. Let $\beta'$ be the path obtained from $\beta$ by replacing $e_1e_2$ by 
$\bar{e_4}\bar{e_3}$ ($\bar e$ denotes the edge $e$ with reversed direction). 
See Figure~\ref{newpath}.
\begin{figure}[h]\centering\includegraphics{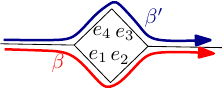}\caption{Replacing $e_1e_2$ by $\bar{e_4}\bar{e_3}$.}\label{newpath}\end{figure}
The path $\beta'$ is joining $v,v'$ in $\widetilde Y$ and the minimal area of disc diagram 
with the boundary path $\widetilde\phi(\beta')\bar\gamma$ is equal to $n-1$. 
By induction assumption there is a minimal length combinatorial path $\alpha$ in $\widetilde Y$ 
such that $\widetilde\phi(\alpha)=\gamma$. 

Now suppose $w\in\gamma$ and let $e_1e_2e_3e_4=\partial S$ such that $e_1,e_2$ 
both contain $w$ and their concatenation $e_1e_2$ is a subpath of $\gamma$. 
Let $\gamma'$ be the path obtained from $\gamma$ by replacing subpath $e_1e_2$ by 
$\bar{e_4}\bar{e_3}$. The  minimal area of a disc diagram with the boundary path 
$\widetilde\phi(\beta)\bar\gamma'$ is $n-1$, thus by the induction assumption there is 
a minimal length combinatorial path $\alpha'$ in $\widetilde Y$ such that 
$\widetilde\phi(\alpha')=\gamma'$. Since $\widetilde\phi$ is a local isometry, 
there exists a minimal length combinatorial path $\alpha$ in $\widetilde Y$ such that 
$\widetilde\phi(\alpha)=\gamma$.

\end{proof}

\begin{cor}\label{convexcarrier}The carrier $N(\Gamma)$ of a hyperplane $\Gamma$ in a \emph{CAT(0)} cube complex $X$ is a convex subcomplex.\end{cor}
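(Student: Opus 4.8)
The plan is to obtain this as an immediate consequence of Lemma~\ref{convex}. Recall from the remark preceding that lemma that the natural map $\iota : N(\Gamma) \to X$ is a local isometry; recall also that $N(\Gamma)$ is non-positively curved, since as a cube complex it is the product $\Gamma \times I_1$ of the non-positively curved complexes $\Gamma$ and $I_1$; and that $X$, being CAT(0), is simply connected, so $\widetilde X = X$.

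First I would apply Lemma~\ref{convex} with $Y = N(\Gamma)$ and $\phi = \iota$. Since $\widetilde X = X$, it yields that the lift $\widetilde\iota : \widetilde{N(\Gamma)} \to X$ is an embedding onto a convex subcomplex of $X$. Next I would note that $\widetilde\iota$ factors as $\widetilde{N(\Gamma)} \xrightarrow{q} N(\Gamma) \xrightarrow{\iota} X$, where $q$ is the universal covering projection. Because $\widetilde\iota$ is injective, $q$ is injective; being also surjective, $q$ is a homeomorphism, so $N(\Gamma)$ is simply connected and is identified with $\widetilde{N(\Gamma)}$ via $q$. Under this identification $\iota$ is carried to $\widetilde\iota$, hence $\iota$ is an embedding onto a convex subcomplex of $X$, which is the claim.

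The only step that takes any thought beyond quoting Lemma~\ref{convex} is verifying that $N(\Gamma)$ is non-positively curved, i.e.\ that the link of each of its vertices is flag. This is routine: $N(\Gamma) = \Gamma\times I_1$, the link of a vertex in a product is the simplicial join of the links of the coordinate vertices in the two factors, a join of flag complexes is flag, and $\Gamma$ is non-positively curved because it is a hyperplane in the non-positively curved complex $X$. Accordingly I do not anticipate any real obstacle here; the content of the corollary lies entirely in Lemma~\ref{convex}, which in turn rests on Theorem~\ref{cubes}.
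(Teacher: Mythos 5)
Your proposal is correct and matches the route the paper intends: the corollary is stated as an immediate consequence of Lemma~\ref{convex} together with the remark preceding it that $\iota:N(\Gamma)\to X$ is a local isometry, and your verification that $N(\Gamma)\cong\Gamma\times I_1$ is non-positively curved plus the factorization argument showing $q$ is a homeomorphism are exactly the details the paper leaves implicit.
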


\section{Cubical small cancellation theory}
The aim of this section is to describe cubical small cancellation theory, due to Wise, 
which is a generalization of classical small cancellation theory. In the beginning we define 
cubical presentations introduced by Wise, following \cite{mixed}, \cite{raags} or \cite{hierarchy}. 
Secondly, we introduce the notion of pseudorectangles and use it to define ladders. 
This definition is consistent with the one given in \cite{hierarchy}, but is more general 
than the one in \cite{raags} and \cite{mixed}, which is equivalent to our definition with 
the restriction that the joining pseudorectangles are actual rectangles. Then, we define 
cone-pieces and hyperplane-pieces, there are several equivalent definitions of pieces 
in cubical presentation complexes, here we  follow \cite{mixed}. This allows us to formulate 
cubical small cancellation conditions. Notice, that we use different terminology than in 
\cite{mixed}, i.e. our hyperplanes-pieces are referred to as \emph{wall-pieces} there. 
Finally, we introduce $D$-pieces for a disc diagram $D$ in a cube complex $X$, which 
are aimed to correspond to pieces in $X$.

\subsection{Cubical presentation}\label{section cubical presentation}
A \emph{cubical presentation} $\langle X,\{Y_i\}\rangle$ consists of a non-positively curved 
cube complex $X$ and a family of local isometries of cube complexes $\phi_i:Y_i\to X$. 
The group $G$ assigned to a cubical presentation is the quotient
\[G=\pi_1(X)/\langle\!\langle \{\phi_{i*}\big(\pi_1(Y_i)\big)\}\rangle\!\rangle\]
Let
\[X^*=X\cup \bigcup_i \text{Cone}(Y_i)/\{(y_i,0)\sim \phi_i(y_i)\text{ for all }y_i\in Y_i\},\]
where $\text{Cone}(Y)=Y\times[0,1]/Y\times\{1\}$. Then we have $G=\pi_1(X^*)$. 
We regard $X^*$ as a cell complex with cells divided into two families: cubes and pyramids 
(i.e.\ cones on single cubes). We will refer to $X^*$ as a \emph{presentation complex}. 
There is a natural combinatorial inclusion $X\to X^*$. The vertices of $X^*$ which 
are not contained in $X$ are called \emph{cone-points}. By van Kampen lemma 
(see \cite{vankampen}), for every closed combinatorial path $P\to X$ such that 
the composition $P\to X\to X^*$ is null-homotopic, there exist a disc diagram 
$(D,\partial D)\to (X^*,X)$ with boundary path $P$. The $2$-cells of $D$ are either 
squares of $X$ or triangles (i.e.\  cones on edges) in the cone $\text{Cone}(Y_i)$ 
for some $i$. The points which are mapped to  cone-points in $X^*$ are also called 
\emph{cone-points} of $D$. Triangles in $D$ are grouped together into cyclic families 
meeting around a cone-point $v$, 
such families form polygons which we call \emph{cone-cells}. From now on we regard 
$D$ as a cell complex with $2$-cells divided into two families: squares and cone-cells. 
We define the \emph{complexity} of a disc diagram $D$ as the following
\[\text{Comp}(D)=(\#\text{cone-cells}, \#\text{squares}).\]
The disc diagram $(D,\partial D)\to(X^*,X)$ is called \emph{minimal} if $\text{Comp}(D)$ 
is minimal in the lexicographical order among disc diagrams with the same boundary path as $D$. 
Whenever the boundary path of a cone-cell $C$ in $D$ has a spur, we can replace $C$ by 
a cone-cell with this spur removed without changing the complexity of $D$. Thus 
we can assume that the boundary path of each cone-cell is immersed.

\begin{exa}
Let $X$ be a wedge of circles labelled by $x_1,\dots, x_n$. Suppose $Y_i$ are 
immersed closed combinatorial paths, i.e.\  $Y_i$ corresponds to a cyclically reduced word 
$r_i$ in alphabet $x_1^{\pm 1},\dots, x_n^{\pm1}$. Then $X^*$ of the cubical presentation 
$\langle X, \{Y_1, \dots, Y_m\} \rangle$ is the standard presentation complex associated 
to the group presentation $\langle x_1, \dots, x_n | r_1, \dots ,r_m\rangle$.
\end{exa}

\subsection{Pseudorectangles and ladders}\label{cubicalladdersection}
\begin{defi}\label{rect}A \emph{rectangle} is a squared disc diagram isometric to 
$I_n\times I_m$ for some  natural numbers $n,m$. A \emph{pseudorectangle} is 
a square disc diagram $R$ with $\partial R=e_1\cdots e_n f_1\cdots f_k e_n'\cdots e_1' g_l\cdots g_1$ 
(where $n\geq 1$, $k,l\geq 0$) such that
\begin{itemize}
\item for every $i=1,\dots,n$ we have $\Gamma(e_i)=\Gamma(e_i')$, 
\item for $i\neq j$ we have $\Gamma(e_i)\cap\Gamma(e_j)=\emptyset$, and
\item the concatenation $e_n f_1\cdots f_k e_n'$ is a path in $N\big(\Gamma(e_n)\big)$ and 
$e_1' g_l\cdots g_1 e_1$ is a path in $N\big(\Gamma(e_1)\big)$.
\end{itemize} 
See the left diagram in Figure~\ref{diagramK}. Paths $e_1\cdots e_n$ and $e_n'\cdots e_1'$ 
are called the \emph{(opposite) sides} of a pseudorectangle $R$. 
\end{defi}

Let $D$ be a squared disc diagram with $e_1e_2,e_1'e_2'\subset \partial D$ 
such that for $i=1,2$ we have $\Gamma(e_i)=\Gamma(e_i')$ and 
$\Gamma(e_1)\cap\Gamma(e_2)=\emptyset$. The subdiagram $K$ 
\emph{lying between $\Gamma(e_1)$ and $\Gamma(e_2)$} is the maximal subdiagram 
in the unique connected component of $\overline{D-\Gamma(e_1)\cup \Gamma(e_2)}$ that 
intersects both $N\big(\Gamma(e_1)\big),N\big(\Gamma(e_2)\big)$. See the right diagram 
in Figure~\ref{diagramK}.
\begin{figure}[h]\centering\includegraphics{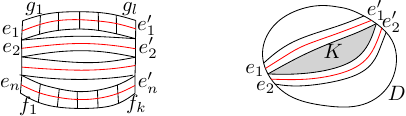}\caption{On the left, a pseudorectangle. On the right, the subdiagram $K$ consists of all squares lying between $\Gamma(e_1)$ and $\Gamma(e_2)$ in $D$ .}\label{diagramK}\end{figure}

\begin{lem}\label{rectangle} Let $D\to X$ be a minimal disc diagram in a non-positively curved 
cube complex $X$. Suppose $D$ is a pseudorectangle, as in Definition~\ref{rect}. Then $k=l$ 
and all squares lying between hyperplanes $\Gamma(e_i)$ (i=1,\dots, n) can be pushed upward, 
i.e.\ there exists a disc diagram $D'$ obtained from $D$ by a sequence of hexagon moves such that 
one of $\Gamma(e_1)$-components of $D'$ is rectangle with sides $e_1\cdots e_n$ and $e_n'\dots e_1'$. 
See Figure~\ref{rectanglelem}.
\begin{figure}[h]\centering\includegraphics{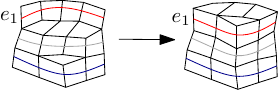}\caption{Pushing squares upward.}\label{rectanglelem}\end{figure}
 \end{lem}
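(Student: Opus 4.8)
The plan is to argue by induction on $\mathrm{area}(D)$, peeling off one layer of squares adjacent to the hyperplane $\Gamma(e_1)$ at a time. Since $D$ is a pseudorectangle, the hyperplanes $\Gamma(e_1),\dots,\Gamma(e_n)$ are pairwise disjoint and each $\Gamma(e_i)$ runs from a point on $e_i$ to a point on $e_i'$, cutting $D$ into $n+1$ bands. The squares dual to $\Gamma(e_1)$ form one such band, and I would first understand this band as a subdiagram: because $\Gamma(e_1)$ is an embedded arc in a disc diagram and is disjoint from $\Gamma(e_2)$, the carrier $N(\Gamma(e_1))$ is itself a squared disc diagram whose boundary decomposes along $e_1$, along the path $e_1'g_l\cdots g_1$ in $N(\Gamma(e_1))$ (which is there by hypothesis), along a subarc of $e_nf_1\cdots f_k e_n'$ where $\Gamma(e_1)$ exits (only if $n=1$; otherwise the far side of $N(\Gamma(e_1))$ lies interior to $D$), and along the interface with the band of $\Gamma(e_2)$.

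The heart of the argument is that $N(\Gamma(e_1))$ is itself a rectangle $I_1\times I_m$ — i.e.\ a straight row of squares with no branching and no square glued to itself. I would prove this by a minimality/CAT(0)-type argument exactly as in Theorem~\ref{cubes}: if two squares of the carrier shared two consecutive edges, the diagram would fail to be minimal (they would map to the same square of $X$), and if the carrier branched or closed up, one would extract a proper subdiagram contradicting, say, the path-graph-or-three-corners conclusion of Theorem~\ref{cubes} applied to an appropriate $\Gamma$-component; here one uses that $X$ is non-positively curved so hexagon moves are available, together with Corollary~\ref{cubecor} on nonsingular diagrams. Once the carrier $N(\Gamma(e_1))$ is a straight rectangle $I_1\times I_m$, its two ends are precisely where $\Gamma(e_1)$ meets $\partial D$: one end contains $e_1$ and the adjacent portion of $g_l\cdots g_1$, the other contains $e_1'$ and the adjacent portion of the same path. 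Counting the boundary edges of this rectangle on the $g$-side versus the ends forces the number of $g$-edges and the number of $f$-edges (on the $\Gamma(e_n)$-side, by the symmetric statement) to match up; running this bookkeeping around establishes $k=l$.

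For the ``pushing upward'' conclusion, I would let $K$ be the subdiagram lying between $\Gamma(e_1)$ and $\Gamma(e_2)$, i.e.\ the band of squares dual to $\Gamma(e_1)$ together with its two bounding carriers, and observe that $\overline{D-(K\setminus N(\Gamma(e_2)))}$ is again a pseudorectangle, now with sides $e_2\cdots e_n$ and $e_n'\cdots e_2'$ and strictly smaller area, to which the induction hypothesis applies: after a sequence of hexagon moves it becomes a diagram one of whose $\Gamma(e_2)$-components is a rectangle with sides $e_2\cdots e_n$, $e_n'\cdots e_2'$. Re-attaching the straight row $N(\Gamma(e_1))$ along $\Gamma(e_2)$'s carrier, and applying hexagon moves square-by-square to slide the row of $\Gamma(e_1)$-squares up against the already-straightened block (each hexagon move is legitimate because $X$ is non-positively curved and the three squares involved pairwise share an edge), yields $D'$ whose $\Gamma(e_1)$-component is the rectangle $I_n\times I_m$ with the prescribed sides. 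The base case $n=1$ is exactly the statement that $N(\Gamma(e_1))=D$ is a straight rectangle, which is the carrier analysis above.

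I expect the main obstacle to be the careful verification that $N(\Gamma(e_1))$ is an embedded straight rectangle and, relatedly, that the hexagon moves used to push the band upward can always be performed and preserve the pseudorectangle structure of the remaining sub-diagram; this is where one must be most careful about singular vertices, about squares potentially being non-embedded, and about the interaction between the carriers $N(\Gamma(e_1))$ and $N(\Gamma(e_2))$ near the corners of $D$ — all of which are controlled, but only after invoking the embedding results and Theorem~\ref{cubes} in the style already used in this section.
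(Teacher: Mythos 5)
Your overall strategy is close in spirit to the paper's: the paper also reduces to a pair of consecutive hyperplanes and uses Theorem~\ref{cubes} plus hexagon moves, and your observations that the carriers embed as straight rows $I_1\times I_m$ and that the $g$-path is forced to be the bottom of $N\big(\Gamma(e_1)\big)$ are correct and consistent with what the paper uses. However, there is a genuine gap at the crucial step. Your description treats the region between $\Gamma(e_1)$ and $\Gamma(e_2)$ as if it consisted only of the two carriers (``the band of squares dual to $\Gamma(e_1)$ together with its two bounding carriers''), so that after straightening everything above $\Gamma(e_2)$ you can ``re-attach the straight row $N\big(\Gamma(e_1)\big)$ along $\Gamma(e_2)$'s carrier'' and ``slide it up square-by-square.'' But in general there are arbitrarily many squares lying strictly between the two carriers, dual to neither hyperplane, and hexagon moves neither delete squares nor let you detach and re-glue subdiagrams: those in-between squares must be relocated to the other side of $\Gamma(e_1)$, and explaining how is essentially the entire content of the lemma. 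Your justification that ``each hexagon move is legitimate because the three squares involved pairwise share an edge'' assumes exactly what has to be proved, namely that at every stage such a triple exists adjacent to the region you want to shrink.

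The paper fills this gap with a two-phase argument you do not have. First, letting $K_i$ denote the subdiagram of in-between squares, it applies Theorem~\ref{cubes} to $K_{i+1}$ to produce a corner distinct from $e_1\cap e_2$ and $e_1'\cap e_2'$ (a spur is ruled out by minimality), and the corner square together with the two adjacent carrier squares is precisely a hexagon-move configuration; this empties $K$ one square at a time, but the evicted squares may land on either side. Second, the squares that were pushed downward into $N\big(\Gamma(e_2)\big)$ must be pushed back up through both carriers by pairs of further hexagon moves, and one has to check these are well defined (that the relevant subdiagram of the two carriers is by then an honest rectangle). Without an argument of this kind -- a supply of corners guaranteed by Theorem~\ref{cubes} and a termination/bookkeeping scheme for where the displaced squares end up -- the ``sliding'' step of your proposal does not go through.
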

 
\begin{proof}It is immediate that it suffices to prove this lemma for $n=2$. Denote by $m$ 
the number of squares in subdiagram lying between $\Gamma(e_1)$ and $\Gamma(e_2)$. 
We will construct a sequence of diagrams $D=D_m,\dots, D_0$ with the following properties:
\begin{itemize}
\item $D_i$ is obtained from $D_{i+1}$ by a single hexagon move,
\item the subdiagram $K_i$ of $D_i$ lying between $\Gamma(e_1)$ and $\Gamma(e_2)$ has exactly $i$ squares. \end{itemize} 
By definition of $m$ the diagram $D_m$ satisfy the second property. Suppose we have already 
defined $D_m,\dots, D_{i+1}$ (where $i=0,\dots, m-1$), let us define $D_i$. By Theorem~\ref{cubes} 
the diagram $K_{i+1}$ has at least three corners and/or spurs. Denote by $v_{i+1}$ one, 
that is distinct from $e_1\cap e_2$ and $e_1'\cap e_2'$. If $v_{i+1}$ was a spur, there would be 
two squares with two consecutive common edges, which is impossible by the minimality of $D$. 
See Figure \ref{spurinK_i}.
\begin{figure}[h]\centering\includegraphics{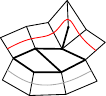}\caption{A spur in $K_i$ violates the minimality of $D$.}\label{spurinK_i}\end{figure}
 Thus $v_{i+1}$ is a corner. Denote by $C$ the square in $K_{i+1}$ containing $v_{i+1}$. 
 There are two more squares in $D_{i+1}$ containing $v_{i+1}$, they are contained in the carrier 
 of exactly one of hyperplanes $\Gamma(e_1), \Gamma(e_2)$. We perform a hexagon move at $v_{i+1}$. 
 We set $D_i$ to be the resulting diagram and we denote by $C_i$ the square opposite to $C$. 
 See Figure~\ref{pushing}. 
\begin{figure}[h]\centering\includegraphics{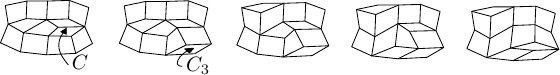}\caption{The sequence of diagrams $D=D_4
,\dots, D_0$.}\label{pushing}\end{figure}
In $D_0$ the diagram $N\big(\Gamma(e_1)\big)\cup N\big(\Gamma(e_2)\big)$ is a rectangle, 
i.e.\ $K_0$ is a path graph, because otherwise there would be three spurs in $K_0$, 
which would contradict the minimality of $D_0$. 
Note that $C_j$ remains in all $D_t$ for $t\leq j$, in particular in $D_0$. Let $n_1,\dots, n_h$ 
be a subsequence of $1,\dots, m$ consisting of exactly those numbers for which $C_{n_j}$ 
was obtained by pushing a square downward, i.e. they appeared in steps where the hexagon move 
was applied to squares such that two of them are contained in $N(\Gamma(e_2))$. In other words, 
$C_{n_j}$ intersects $N(\Gamma(e_2))$ in $D_{n_j}$ \Big(and what follows, $C_{n_j}$ does not intersect $N\big(\Gamma(e_1)\big)$ in $D_{n_j}$ and what follows also in $D_t$ for $t<n_j$\Big). 
Now squares pushed downward, will be pushed ``back''. We define a sequence of disc diagrams 
$D_0=D_0',\dots, D_h'$ in the following way: the diagram $D_{j+1}'$ is obtained from $D_j'$ 
by pushing square $C_{n_j}$ upward, i.e.\ we first apply a hexagon move to $C_{n_j}$ and 
two uniquely determined squares in $N\big(\Gamma(e_2)\big)$ which meet $C_{n_j}$ and 
then we apply a  second hexagon move to the square $\widehat{C}_{n_j}$ opposite to $C_{n_j}$ 
and two uniquely determined squares in $N\big(\Gamma(e_1)\big)$ which meet $\widehat{C}_{n_j}$. 
See Figure~\ref{pushingupward}.
\begin{figure}[h]\centering\includegraphics{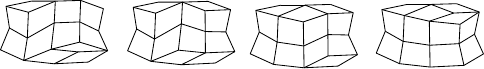}\caption{Pushing squares ``back''. The sequence of diagrams $D_0',\dots, D_h'$.}\label{pushingupward}\end{figure}
 Note that the first hexagon move is the ``inverse'' of the hexagon moves performed 
 in the definition of $D_{n_j}$ and can be performed because all other hexagon moves 
 performed in first part (i.e. those used in definition of $D_i$ for $i=n_j$ $j=1,\dots, h$) 
 leave unchanged $N(\Gamma(e_2))$ and all squares already pushed downward. 
 In each $D_j'$ the subdiagram $N\big(\Gamma(e_1)\big)\cup N\big(\Gamma(e_2)\big)$ 
 is a rectangle and so the second hexagon move is also well defined. 
 We set $D'=D_h'$ and we are done.

\end{proof}

\begin{defi} A \emph{ladder} is a minimal disc diagram $(L,\partial L)\to (X^*,X)$ in 
a presentation complex $X^*$ consisting of a sequence of cone-cells and/or vertices 
$C_1,\dots, C_n$ ($n\geq2$) and square complexes joining them in the following sense:
\begin{itemize}\item if $n=2$ one of the following holds:
\begin{enumerate}[(1)]
\item $C_1$ and $C_2$ are cone-cells glued along a vertex $v$, i.e.\  $L=C_1\cup_v C_2$, or 
\item $C_1$ and $C_2$ are joined by a single edge $e$ where $e\cap C_1, e\cap C_2$ 
are two vertices of $e$, or
\item the diagram consists of a pseudorectangle $R$ and cone cells $C_1,C_2$ 
each attached to one side of $R$,
\end{enumerate}

  \begin{figure}[h]\centering\includegraphics{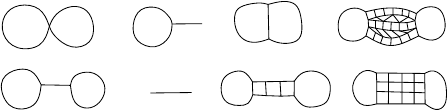}\caption{Ladders for $n=2$.}\end{figure}

\item if $n\geq 3$, then for each $1<i<n$ there are exactly two connected components $L'$ and $L''$ of $L-C_i$ and subdiagrams $L'\cup C_i, L''\cup C_i\subset L$  are both ladders.\end{itemize}

 \begin{figure}[h]\centering\includegraphics{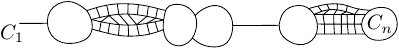}\caption{Example of a ladder.}\end{figure}
The cells $C_1, C_n$ are called \emph{end-cells} of $L$. 

 \end{defi}

\subsection{Pieces}
Given a map $\phi:Y\to X$ an \emph{elevation} of $Y$ to the universal cover $\widetilde X$ of $X$ 
is a map $\widetilde Y\to \widetilde X$ which covers $Y\to X$ such that $\widetilde Y$ is 
the covering space corresponding to $\ker\phi_*$, where $\phi_*:\pi_1(Y)\to\pi_1(X)$ is 
induced by $\phi$. Note that such $\widetilde Y$ is the universal cover of $Y$, 
whenever $\phi_*$ is injective. An \emph{abstract cone-piece} in $Y_i$ of $Y_j$ is the intersection 
$P=\widetilde{Y}_i\cap \widetilde Y_j'$ for some elevations $\widetilde Y_i, \widetilde Y_j' $ of 
$Y_i, Y_j$ to the universal cover $\widetilde X$ of $X$. In the case where $i=j$ we require 
that for the projections $P\to Y_i, P\to Y_j$ there is no automorphism $Y_i\to Y_j$ such that the diagram
\begin{center}
\begin{tikzpicture}
[description/.style={fill=white,inner sep=2pt}]
       \matrix (m) [matrix of math nodes, row sep=2em, column sep=1.5em, text height=1.5ex, text depth=0.25ex]
       { P & Y_i \\
           Y_j& X \\};
       \path[->,font=\scriptsize]
       (m-1-1) edge node[auto] {} (m-1-2)
       (m-1-1) edge node[left] {} (m-2-1)
       (m-1-2) edge node[auto] {} (m-2-2)
       (m-1-2) edge node[auto] {} (m-2-1)
       (m-2-1) edge node[auto] {} (m-2-2);
\end{tikzpicture}
\end{center}
commutes. An \emph{abstract hyperplane-piece} in $Y_i$ is the intersection 
$\widetilde Y_i \cap N(\widetilde A)$, where $\widetilde A$ is a hyperplane in $\widetilde X$ such that 
$\widetilde A\cap \widetilde Y_i =\emptyset$. An \emph{abstract piece} is an abstract cone-piece or 
an abstract hyperplane-piece. A path $\alpha\to Y_i$ is a \emph{piece} (respectively, a \emph{cone-piece}, 
or a \emph{hyperplane-piece}) in $Y_i$, if it lifts to $\widetilde Y_i$ into an abstract piece 
(respectively, an abstract cone-piece, or an abstract hyperplane-piece) in $Y_i$. 
A closed path is \emph{essential} if it is not homotopic to a constant map. 
The cubical presentation $\langle X,\{Y_i\}\rangle$ satisfies \emph{$\mathrm C(p)$-small cancellation condition}, 
if no essential closed path in $Y_i$ can be expressed as a concatenation of fewer than $p$ pieces.

Let $(D,\partial D)\to(X^*,X)$ be a minimal disc diagram. A \emph{$D$-cone-piece} in a cone-cell $C$ 
is a subpath $P$ of $\partial C$ which lies in $C\cap C'$ for some cone-cell $C'\neq C$ in $D$. 
See Figure~\ref{Dpieces}. 
 \begin{figure}[h]\centering\includegraphics{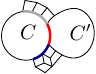}\caption{$D$-cone-piece and $D$-hyperplane-pieces.}\label{Dpieces}\end{figure}
A \emph{$D$-hyperplane-piece} in $C$ is a subpath $P$ of $\partial C$ such that there exists 
a diagram $D'$ obtained from $D$ by a  sequence of hexagon moves and a rectangle 
$I_n\times I_1$ in $D'$ with $P=I_n\times \{1\}$. A \emph{$D$-piece} is a $D$-cone-piece or 
a $D$-hyperplane-piece. See Figure~\ref{Dpieces}. Note that any subpath of a $D$-piece is a $D$-piece.

For every $D$-cone-piece in $C$ there exists a unique maximal $D$-cone-piece containing it, 
but in general this is not true for the $D$-hyperplane-pieces. See Figure~\ref{notunique}. 
 \begin{figure}[h]\centering\includegraphics{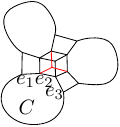}\caption{The maximal $D$-hyperplane-pieces in $C$ are $e_1e_2$ and $e_2e_3$.}\label{notunique}\end{figure}

\begin{lem}Let $\psi: (D,\partial D)\to (X^*,X)$ be a minimal disc diagram in the presentation complex $X^*$. 
Then every $D$-piece in a cone-cell $C$ corresponding to $Y_i$ is mapped under $\psi$ to a piece in $Y_i$.\end{lem}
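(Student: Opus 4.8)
The plan is to lift $P$ into the universal cover $\widetilde X$ of $X$, inside a suitably chosen elevation of $Y_i$, and then to produce the second half of an abstract piece --- another elevation in the cone-piece case, a hyperplane of $\widetilde X$ in the hyperplane-piece case --- meeting that elevation exactly along the lift of $P$. For the part common to both cases: since $C$ corresponds to $Y_i$, each edge of $\partial C$ is the base of a triangle of $C$ lying in $\text{Cone}(Y_i)$, hence an edge of $Y_i$, so we get a combinatorial map $\beta_C\colon\partial C\to Y_i$ with $\psi|_{\partial C}=\phi_i\circ\beta_C$; this $\beta_C$ is an immersion because $\partial C$ is (cone-cell boundaries are immersed by assumption) and $\phi_i$ is a combinatorial immersion. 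By Lemma~\ref{convex} applied to $\phi_i$ the universal cover of $Y_i$ embeds in $\widetilde X$ as a convex subcomplex; in particular $\phi_{i*}$ is injective and the elevations of $Y_i$ are exactly the embedded convex copies of its universal cover. Fix a vertex $p_0$ of $P$ and a lift $\tilde p_0\in\widetilde X$ of $\psi(p_0)$. I would then choose the elevation $\widetilde Y_i\hookrightarrow\widetilde X$ together with the lift $\widehat P\colon P\to\widetilde Y_i$ of $\beta_C|_P$ so that $\widehat P(p_0)$ sits over $\tilde p_0$; this is possible because the lift of $\phi_i$ from the universal cover of $Y_i$ to $\widetilde X$ exists (the source is simply connected) and may be taken to carry any prescribed vertex to any prescribed lift of its image. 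Writing $\widetilde P$ for the composite $P\xrightarrow{\widehat P}\widetilde Y_i\hookrightarrow\widetilde X$, this is a lift of $\psi|_P$ with $\widetilde P(p_0)=\tilde p_0$ whose image lies in $\widetilde Y_i$, and it remains only to exhibit the appropriate companion object in $\widetilde X$ whose intersection with $\widetilde Y_i$ contains the image of $\widetilde P$ and is an abstract piece.

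\emph{Cone-piece case.} Here $P\subset C\cap C'$ for a cone-cell $C'\ne C$ corresponding to some $Y_j$, and the cone structure of $C'$ gives $\beta_{C'}|_P\colon P\to Y_j$ with $\psi|_P=\phi_j\circ\beta_{C'}|_P$. Running the construction above with $C'$ in place of $C$ produces an elevation $\widetilde Y_j'$ and a lift $\widetilde P'$ of $\psi|_P$ with $\widetilde P'(p_0)=\tilde p_0$ and image in $\widetilde Y_j'$. Two lifts of one path with a common initial point coincide, so $\widetilde P=\widetilde P'$, hence its image lies in $\widetilde Y_i\cap\widetilde Y_j'$, an abstract cone-piece, and $\psi(P)$ is a cone-piece in $Y_i$. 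When $i=j$ I must additionally rule out an automorphism of $Y_i$ carrying $\beta_C|_P$ to $\beta_{C'}|_P$ over $X$: such an automorphism would force $\widetilde Y_i=\widetilde Y_j'$ inside $\widetilde X$, so $C$ and $C'$ would be distinct cone-cells lying over a single elevation of $Y_i$ and overlapping along $P$, and merging them into one cone-cell would lower $\#\text{cone-cells}$, contradicting the minimality of $D$.

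\emph{Hyperplane-piece case.} By definition there is a diagram $D'$ obtained from $D$ by hexagon moves and a rectangle $R=I_n\times I_1\subset D'$ with $P=I_n\times\{1\}$. Hexagon moves leave cone-cells untouched and preserve both $\#\text{cone-cells}$ and $\#\text{squares}$, so $D'$ is again minimal, $P\subset\partial C$ persists in $D'$, and the common set-up applies verbatim in $D'$. The squared subcomplex $R$ maps into $X$ and is simply connected, so it lifts to a map $\widetilde R\to\widetilde X$, and after a deck transformation I would arrange that the lift of $P\subset R$ issuing from $\tilde p_0$ is precisely $\widetilde P$. Let $\widetilde A$ be the hyperplane of $\widetilde X$ containing the lift of the hyperplane of $R$ dual to the edges $\{k\}\times I_1$; since $R$ lies in the carrier of that hyperplane, $\widetilde P$ lies in $N(\widetilde A)$. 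It remains to check $\widetilde A\cap\widetilde Y_i=\emptyset$. If it were nonempty, then as $\widetilde Y_i$ is a convex, hence full, subcomplex, $\widetilde A$ would be dual to an edge of $\widetilde Y_i$; transporting this configuration back to $D'$ one would find a square of $R$ adjacent to $\partial C$ that can be absorbed into the cone-cell $C$, decreasing $\#\text{squares}$ without increasing $\#\text{cone-cells}$ --- again contradicting minimality. Hence $\widetilde A\cap\widetilde Y_i=\emptyset$ and the image of $\widetilde P$ lies in the abstract hyperplane-piece $\widetilde Y_i\cap N(\widetilde A)$, so $\psi(P)$ is a hyperplane-piece in $Y_i$.

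The main obstacle in both cases is the concluding minimality argument: converting ``the companion elevation coincides with $\widetilde Y_i$'' or ``the hyperplane $\widetilde A$ crosses $\widetilde Y_i$'' into an explicit reduction of $\text{Comp}(D)$, via a cone-cell combination move or a square-absorption move. Everything else --- choosing mutually compatible lifts and bookkeeping basepoints --- is routine path lifting, and I would keep it brief.
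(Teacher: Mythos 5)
Your cone-piece case is essentially the paper's argument: when $i=j$ the commuting automorphism lets one combine $C$ and $C'$ into a single cone-cell mapping to $\mathrm{Cone}(Y_j)$ (the paper realizes the merged cell via the universal property of the pushout $C\cup_P C'$), which lowers $\#\text{cone-cells}$ and contradicts minimality of $D$. That half is sound, modulo your asserting rather than exhibiting the map $C\cup_P C'\to\mathrm{Cone}(Y_j)$.

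The hyperplane-piece case has a genuine gap at the one step that carries the content of the lemma, namely $\widetilde A\cap\widetilde Y_i=\emptyset$. From ``$\widetilde A$ is dual to some edge of $\widetilde Y_i$'' you jump to ``a square of $R$ adjacent to $\partial C$ can be absorbed into $C$.'' But the edge of $\widetilde Y_i$ dual to $\widetilde A$ furnished by your assumption may sit anywhere in $\widetilde Y_i$, arbitrarily far from the lift of $R$ and of $C$; its existence is a statement about the universal cover and does not transport back to any square of the diagram $D'$ mapping into $Y_i$, so no complexity reduction is produced. You yourself flag this conversion as ``the main obstacle'' and leave it open, yet it is exactly what must be proved. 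The paper closes this step with no minimality argument at all: choose a vertical edge $\tilde e=\{k\}\times I_1$ of the lifted rectangle having an endpoint $\tilde v$ on $\widetilde P\subset\widetilde Y_i$ but with $\tilde e\not\subset\widetilde Y_i$; if $\widetilde A=\Gamma(\tilde e)$ met $\widetilde Y_i$ in an edge $\tilde e'$, then $N(\widetilde A)\cap\widetilde Y_i$ would be convex, being the intersection of the convex subcomplexes given by Lemma~\ref{convex} and Corollary~\ref{convexcarrier}, and it would contain both $\tilde v$ and $\tilde e'$; a geodesic from $\tilde v$ to the far endpoint of $\tilde e'$ can be taken to start with $\tilde e$, so convexity forces $\tilde e\subset\widetilde Y_i$, a contradiction. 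Even if you insist on finishing by a square-absorption move, you need this convexity argument first (it is what places a square of $R$ inside $Y_i$), so the minimality detour buys nothing and, as written, does not close the proof.
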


\begin{proof}First suppose that $P$ is a $D$-cone-piece of $C'$ in $C$, where $C$ corresponds to 
$Y_i$ and $C'$ to $Y_j$. If $i\neq j$, then there is nothing to check. Suppose that $i=j$ and $P$ 
is not mapped to a piece, i.e.\ there is an automorphism $\phi: Y_i\to Y_j$ such that the diagram

\begin{center}
\begin{tikzpicture}
[description/.style={fill=white,inner sep=2pt}]
       \matrix (m) [matrix of math nodes, row sep=2em, column sep=1.5em, text height=1.5ex, text depth=0.25ex]
       { P & Y_i \\
           Y_j& X \\};
       \path[->,font=\scriptsize]
       (m-1-1) edge node[auto] {$ cp$} (m-1-2)
       (m-1-1) edge node[left] {$ c'p' $} (m-2-1)
       (m-1-2) edge node[auto] {} (m-2-2)
       (m-1-2) edge node[auto] {$ \phi $} (m-2-1)
       (m-2-1) edge node[auto] {} (m-2-2);
\end{tikzpicture}
\end{center}
commutes, where $p:P\to \partial C$, $c:\partial C \to Y_i$, $p':P\to \partial C'$, $c':\partial C'\to Y_j$. By the universal property of amalgamated sum, there exists $\mu: C_1 \cup_P C_2\to \text{Cone}(Y_j)$ such that the diagram

\begin{center}
\begin{tikzpicture}
[description/.style={fill=white,inner sep=2pt}]
       \matrix (m) [matrix of math nodes, row sep=1.5em, column sep=1em, text height=1.5ex, text depth=0.25ex]
       { P & C&  \\
          C' & C\cup_P C'&  \\
         &  & \text{Cone}(Y_j) \\};
       \path[->,font=\scriptsize]
       (m-1-1) edge node[auto] {} (m-1-2)
       (m-1-1) edge  node[auto] {} (m-2-1)
       (m-1-2) edge  node[auto] {} (m-2-2)
       (m-2-1) edge node[auto] {} (m-2-2)
       (m-2-2) edge node[auto] {$\mu$} (m-3-3)
       (m-1-2) edge [bend left=20] node[auto] {$\text{Cone}(\phi)\text{Cone}(p)$} (m-3-3)
       (m-2-1) edge [bend right=20] node[left] {$\text{Cone}(p')$} (m-3-3);
\end{tikzpicture}
\end{center}
commutes. We can replace $C$ and $C'$ by a single cone-cell contained in $\text{Cone}(Y_j)$ 
and get a diagram $D'\to X$, with $\partial D'=\partial D$ and $\text{Comp}(D')<_{lex}\text{Comp}(D)$, 
which contradicts the minimality of $D$. 
 
Now suppose $P$ is a $D$-hyperplane-piece in $C$, let $R=I_n\times I_1$ be the rectangle in diagram $D'$ 
obtained from $D$ by a sequence of hexagon moves such that $P=I_n\times \{1\}$. Let $e$ be any edge 
in $\psi(R)$ which has a vertex in $Y_i$, but is not contained in $Y_i$. Let $\widetilde{Y_i}$ be 
some elevation of $Y_i$ and $\tilde e$ a lift of $e$ with a vertex $\tilde v$ in $\widetilde{Y_i}$. 
Then $\tilde e$ is not contained in $\widetilde{Y_i}$, since $e$ was not contained in $Y_i$. 
It suffices to check that $\Gamma(\tilde e)$ does not intersect $\widetilde{Y_i}$. Suppose the contrary 
and denote by $\tilde{e'}$ an edge in $\widetilde{Y_i}$ dual to $\Gamma(\tilde e)$. 
See Figure~\ref{piecestopiecesfig}. Observe that, by Lemma~\ref{convex} $\widetilde{Y_i}$ 
is a convex subcomplex of $\widetilde X$ and by Corollary~\ref{convexcarrier}, also 
$N\big(\Gamma(\tilde{e})\big)$ is convex. The intersection $N\big(\Gamma(\tilde e)\big)\cap \widetilde Y_i$ 
contains $\tilde v$ and $\tilde{e'}$ and since it is convex as an intersection of convex complexes 
it also contains $\tilde e$, which is a contradiction. 
 \begin{figure}[h]\centering\includegraphics{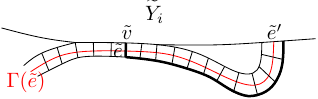}\caption{The thickened path is minimal, so it is contained in $\widetilde Y_i$, since $\widetilde Y_i$ is convex.}\label{piecestopiecesfig}\end{figure}

\end{proof}

Having defined $D$-pieces in a disc diagram $D$, we can adapt the notion of shells. 
A non-disconnecting boundary cell $C$ is a \emph{shell of degree $k$} (or a \emph{$k$-shell} for short) 
if $k$ is the minimal number such that the inner path $P$ of $C$ can be expressed as a concatenation 
of $k$ $D$-pieces. The degree of a cone-cell $C$ in $D$ is denoted by $\deg_D(C)$. 
In the statement of the main result we will also use the notion of corners as defined in 
Section~\ref{seccorners} and spurs as in Section~\ref{shellsandspurs}.

\begin{lem}\label{edgeinpiece} If $C$ is a $k$-shell, then for every $i=1,\dots, k$ there exists 
an edge $e$ in the inner path $P$ of $C$, such that $e\subset P_i$ for any decomposition 
$P=P_1\cdots P_k$ into $D$-pieces.\end{lem}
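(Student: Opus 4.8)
The plan is to reduce the claim to the combinatorial statement that, as the decomposition of $P$ into $k$ $D$-pieces varies, the $i$-th piece always meets one fixed edge, and then to prove that statement by a splicing argument driven by the minimality of the degree $k$. Write the inner path as an edge-path $P=f_1f_2\cdots f_m$ with $m=l(P)$. Since pieces are nontrivial and $k$ is the \emph{minimal} number of $D$-pieces whose concatenation is $P$, any decomposition $P=P_1\cdots P_k$ into $k$ $D$-pieces is recorded by a strictly increasing sequence of integers $0=t_0<t_1<\cdots<t_k=m$ with $P_j=f_{t_{j-1}+1}\cdots f_{t_j}$; call such a sequence \emph{admissible}. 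The edges lying in the $i$-th piece of the decomposition recorded by $t_\bullet$ are exactly those $f_s$ with $t_{i-1}<s\le t_i$.

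Now fix $i\in\{1,\dots,k\}$ and set
\[ M=\max_{t_\bullet\text{ admissible}} t_{i-1}, \qquad \mu=\min_{t_\bullet\text{ admissible}} t_i ; \]
these extrema are attained because $C$ is a $k$-shell (so some admissible sequence exists) and there are only finitely many. Granting the inequality $M<\mu$, the lemma follows at once: choose any integer $j$ with $M<j\le\mu$; then every admissible $t_\bullet$ satisfies $t_{i-1}\le M<j$ and $t_i\ge\mu\ge j$, so $f_j$ lies in the $i$-th piece of that decomposition, and $e:=f_j$ is as required. Equivalently, the intersection over all admissible $t_\bullet$ of the integer intervals $\{t_{i-1}+1,\dots,t_i\}$ equals $\{M+1,\dots,\mu\}$, which is nonempty precisely when $M<\mu$.

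It remains to prove $M<\mu$, and here I argue by contradiction, supposing $M\ge\mu$. The case $i=1$ is immediate since then $M=0<1\le\mu$, so assume $i\ge 2$. Pick admissible sequences $a_\bullet$ with $a_i=\mu$ and $b_\bullet$ with $b_{i-1}=M$, with pieces $A_1,\dots,A_k$ and $B_1,\dots,B_k$ respectively. As $0<M=b_{i-1}<b_k=m$, there is a unique $j$ with $a_{j-1}\le M<a_j$, and $j\le k$; moreover $a_j>M\ge\mu=a_i$ forces $j\ge i+1$ since $a_\bullet$ is strictly increasing. Now $B_1\cdots B_{i-1}$ is a concatenation of $i-1$ $D$-pieces equal to $f_1\cdots f_M$, and the subpath $A_j'$ of $A_j$ consisting of the edges $f_{M+1},\dots,f_{a_j}$ is a nonempty subpath of the $D$-piece $A_j$, hence is itself a $D$-piece. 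Therefore
\[ P \;=\; \big(B_1\cdots B_{i-1}\big)\, A_j'\, \big(A_{j+1}\cdots A_k\big) \]
exhibits $P$ as a concatenation of $(i-1)+1+(k-j)=i+k-j\le k-1$ $D$-pieces, contradicting the minimality of $k$. Hence $M<\mu$, which finishes the argument.

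The step I expect to be the main obstacle is the splicing in the last paragraph: one must check that $f_{M+1},\dots,f_{a_j}$ is genuinely a nonempty subpath of the \emph{single} $D$-piece $A_j$ — which is exactly what the choice of $j$ with $a_{j-1}\le M<a_j$ guarantees — and that concatenating the $B$-prefix, this truncation of $A_j$, and the remaining $A$-pieces produces an honest edge-path equal to $P$. Everything else is bookkeeping, the only external inputs being that subpaths of $D$-pieces are $D$-pieces and that $k$ is minimal.
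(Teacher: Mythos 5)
Your proof is correct, and it is essentially the paper's argument: both rest on splicing a prefix of one minimal decomposition with a (truncated) suffix of another, using that subpaths of $D$-pieces are $D$-pieces, to contradict the minimality of $k$ when the $i$-th pieces fail to share an edge. The only cosmetic difference is that you take extremal breakpoints $M,\mu$ over all admissible decompositions, whereas the paper realizes these extrema by two explicit ``greedy'' decompositions built from maximal $D$-pieces starting at either endpoint of the inner path.
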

\begin{proof}
Suppose the contrary and let $i$ be minimal such that the intersection $\bigcap P_i$ over 
all decompositions $P=P_1\cdots P_k$ into $D$-pieces contains no edges. 
Since all $P_i$ are connected subpaths of $P$ it follows that there exist two decompositions 
$P_1\cdots P_k$ and $Q_1\cdots Q_k$ of $P$ into $D$-pieces such that $P_i\cap Q_i$ contains no edges. 
Without loss of generality we can assume that $Q_i$ occurs in $P$ before $P_i$. 
Then \[P_j\cap Q_l \neq \emptyset \text{ for some } j<i \text{ and } l>i.\]
Observe that
\[P_1\cdots P_j \cdot \overline{Q_l-P_j} \cdot Q_{l+1}\cdots Q_k\]
is a decomposition of $P$ into at most $k-1$ $D$-pieces which contradicts the assumption that $C$ is a $k$-shell.
\end{proof}

Every embedded boundary cone-cell $C$ that does not disconnect is a $k$-shell for some $k$, 
since every edge of $\partial C$ internal in $D$ is contained in some $D$-piece. The degree 
of a boundary cone-cell in $D$ is stable under applying hexagon moves in $D$. This follows 
immediately from the definition of $D$-piece.

\begin{exa}
By Lemma~\ref{rectangle}, if $R$ is pseudorectangle with a side $P$ contained in $\partial C$, 
then $P$ is a $D$-hyperplane-piece in $C$, In particular if $L$ is a ladder, then every end-cell $C$ 
is a vertex or \emph{$1$-shell} in $L$ . All other cone-cells are disconnecting, so there are no shells 
of degree $>1$ in $L$.
\end{exa}

\section{The main theorem}

Our goal is to prove the following theorem:
\begin{thm}
\label{cubical}Let $\langle X,\{Y_i\}\rangle$ be a cubical presentation satisfying $\mathrm C(9)$ 
small cancellation condition and let $(D,\partial D)\to (X^*, X)$ be a minimal disc diagram. 
Then one of the following holds:

\begin{itemize}\item $D$ is a single vertex or a single cone-cell,
\item $D$ is a ladder,
\item $D$ has at least three shells of degree $\leq 4$ and/or corner-squares and/or spurs. 
However, if $D$ contains a $4$-shell, there are at least four shells of degree $\leq4$ and/or 
corner-squares and/or spurs.
\end{itemize}
\end{thm}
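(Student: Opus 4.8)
The plan is to mimic the structure of the proofs of Theorem~\ref{classical} and Theorem~\ref{cubes}: argue by induction on the complexity $\text{Comp}(D)=(\#\text{cone-cells},\#\text{squares})$, reduce to the case of a diagram $D$ with no disconnecting cells via Lemma~\ref{pushout} (which applies verbatim to the refined Greendlinger-type condition in the statement, once one checks the two-exposed-cell case survives gluing along a single cell), and then study $D$ through its internal subdiagram $\text{Int}_D$, which by Lemma~\ref{nodisconnect} is nontrivial and connected when $D$ has at least three cells. The crucial new ingredient, as announced in the introduction, is the notion of \emph{$D$-walls}: a $D$-wall is built by starting from a hyperplane carrier or a ``cone-cell spanning'' move and extending through squares and cone-cells, so that $D$ decomposes along a $D$-wall into $D$-wall-components exactly as a squared diagram decomposes along a hyperplane. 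I would first develop the basic combinatorics of $D$-walls — that they are embedded, that one can push squares between parallel segments (the analogue of Lemma~\ref{rectangle}), and that a $D$-wall component is again a minimal disc diagram of strictly smaller complexity — so that the inductive hypothesis applies to it.

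The heart of the argument is then a case analysis driven by $\text{Int}_D$. If $\text{Int}_D$ is a single vertex or a single cone-cell, count cells attached to it: valence/girth constraints plus the $C(9)$ condition force many boundary cone-cells, each of which is a shell of bounded degree (here the $C(9)$ hypothesis, as opposed to Wise's $C(12)$, is what makes the degree bound $4$ rather than larger, and where the sharpened ``if there is a $4$-shell there are four exposed cells'' clause must be extracted — this is the delicate numeric bookkeeping). If $\text{Int}_D$ has at least three exposed cells (shells of degree $\le4$, corners, spurs), then as in Theorem~\ref{classical} each such exposed cell of $\text{Int}_D$ produces, via the collar of squares and cone-cells around it together with Lemma~\ref{edgeinpiece} and the $C(9)$ count, at least one exposed cell of $D$ attached to it and to no other exposed cell of $\text{Int}_D$, giving three (or four) exposed cells of $D$. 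If $\text{Int}_D$ is a ladder, one runs the end-cell analysis of Theorem~\ref{classical} adapted to cone-cells and pseudorectangles, using the $D$-wall structure and Lemma~\ref{rectangle} to understand the squared portions; the subcase where $\text{Int}_D$ is a path graph requires the hexagon-move / collaring arguments from Theorem~\ref{cubes} (via Lemma~\ref{collaredbyhyperplanes}) to handle the purely cubical ends. Before all of this one must, exactly as in the proof of Theorem~\ref{cubes}, dispose of non-embedded cells and carriers: show squares, cone-cells, and $D$-walls embed and that pairwise intersections are connected, by taking a minimal bad subpath, extracting a subdiagram $D'$ with at most two exposed cells, and contradicting the inductive hypothesis.

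The main obstacle I anticipate is twofold. First, the precise extremal counting that yields degree $\le4$ (and the strengthened four-exposed-cells clause in the presence of a $4$-shell) under $C(9)$ rather than $C(12)$: one must track how $D$-pieces along the inner path of a boundary cone-cell interact with the $D$-wall decomposition, and show that when a cone-cell of $\text{Int}_D$ contributes via only three pieces, the middle piece still carries a genuine boundary exposed cell of $D$ — the slack between $9$ and what is needed is thin, so the argument has to be tight and is where Lemma~\ref{edgeinpiece} (existence of an edge forced into every minimal decomposition) does essential work. Second, marrying the two kinds of structure: cone-cells behave like classical $2$-cells but squares behave like CAT(0) cube-complex cells, and the $D$-wall is the device meant to let hyperplane-type surgery (hexagon moves, pushing squares, collaring) coexist with cone-cell-type arguments. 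Verifying that $D$-walls are well-behaved in mixed regions — in particular that a $D$-wall entering a cone-cell ``exits'' it in a controlled way and that $D$-wall components have strictly smaller complexity and inherit minimality — is the technical backbone on which every case of the induction rests, and I expect it to consume the bulk of Section~5.
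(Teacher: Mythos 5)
Your preliminary reductions (induction on $\mathrm{Comp}(D)$, gluing along disconnecting cells, embeddedness of cells and of $D$-wall carriers via a minimal bad subpath, nontriviality of $\mathrm{Int}_D$ from Lemma~\ref{nodisconnect}) match the paper's Steps 1--4, and you correctly identify $D$-walls as the central device. But the step you call ``the heart of the argument'' --- applying the inductive hypothesis to $\mathrm{Int}_D$ and case-analyzing whether it is a vertex, a single cone-cell, a ladder, or has three exposed cells, as in Theorem~\ref{classical} --- is not what the paper does, and it contains a genuine gap. The transfer ``each exposed cell of $\mathrm{Int}_D$ produces an exposed cell of $D$ attached to it'' is exactly the point that fails to localize in the mixed square/cone-cell setting. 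In the classical proof, every cell attached along the boundary arc of an exposed cell of $\mathrm{Int}_D$ is a relator $2$-cell whose degree is bounded by $C(6)$ piece-counting on its own boundary. Here the attached cells can be squares organized into pseudorectangles; a long rectangle attached along part of that boundary arc contributes no exposed cell of $D$ and collapses into a single $D$-hyperplane-piece of the next cone-cell out, and an attached cone-cell's inner path in $D$ can acquire extra hyperplane-pieces from squares wedged between it and $\mathrm{Int}_D$. The count ``boundary arc needs $\geq 5$ $D$-pieces'' does not by itself produce an attached shell of degree $\leq 4$, and with only $C(9)$ (rather than $C(12)$) there is no slack to absorb this. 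You flag this as ``delicate numeric bookkeeping,'' but it is not a bookkeeping issue: the local count does not close, which is presumably why the paper abandons this architecture.

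What the paper actually does, after Step~5 shows $D$ has at least two exposed cells, is argue by contradiction from the two critical configurations --- (A) exactly two exposed cells, or (B) three exposed cells one of which is a $4$-shell. It then proves $D$ is \emph{collared by $D$-walls} joining the exposed cells (Lemma~\ref{collaredD}, Remark~\ref{collaringwalls=exposedcells}, Step~6), that $\mathrm{Int}_D$ is entirely squared (Step~7, by launching $D$-walls from a putative internal cone-cell and cutting off subdiagrams with too few exposed cells), and that no non-exposed boundary cone-cells exist (Step~8, using Lemma~\ref{edgeinpiece} and Corollary~\ref{singlepiece} to track where $D$-walls terminate); this forces $D$ to be a ladder in case (A) and yields a contradiction in case (B). Your proposal never engages with case (B), which is where the strengthened ``four exposed cells if there is a $4$-shell'' clause is actually proved and which occupies the longest step of the paper. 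A secondary issue: Lemma~\ref{pushout} does not apply ``verbatim'' --- the paper must introduce \emph{generalized ladders} (ladders with attached rectangles and squares) and reprove the gluing lemma as Lemma~\ref{pushout2}, because in the cubical setting gluing two ladders along a single cell can produce a non-ladder with only two exposed cells. So while your toolbox is the right one, the proposed global strategy would need to be replaced by the collaring-by-$D$-walls contradiction argument to go through.
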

\noindent We will refer to shells of degree $\leq4$, corner-squares and spurs as \emph{exposed cells}. 
If $D=C$ is a single cone-cell, then $C$ is also called an exposed cell of $D$. 
This theorem for the condition $C'(\frac{1}{12})$ with suitable notion of exposed cone-cells is 
Theorem 9.3 in \cite{raags} or Theorem 3.38 in \cite{hierarchy}. In these papers generalized corners 
are allowed in the place of corners, which gives a slightly weaker statement than here. 
See Lemma~\ref{square} for the definition of generalized corners and note that in \cite{raags} 
they are called \emph{cornsquares}.

\begin{defi}\label{genlad}
A \emph{generalized ladder} is a disc diagram $D$ such that
\begin{itemize}
\item either $D$ is a rectangle $I_n\times I_1$ with $n\geq1$,
\item or $D=R_1\cup_{C_1}L\cup_{C_2}R_2$, where
\begin{itemize}
\item $L$ is a ladder or a single cone-cell or vertex (called the \emph{ladder part} of $D$) 
and $C_1,C_2$ are vertices or edges that in the case where $L$ is a ladder are contained 
in two different end-cells of $L$.
\item for $i=1,2$ the diagram $R_i$ is one of the following:
\begin{itemize}
\item a vertex equal $C_i$, or
\item a rectangle $I_{n_i}\times I_1$ with $n_i\geq 1$ (called an \emph{attached rectangle} of $D$) 
with $C_i=v_i\times I_1$ where $v_i$ is an endpoint of $I_{n_i}$, or 
\item a square (called an \emph{attached square}) and $C_i$ is a vertex of $R_i$.\end{itemize}
\end{itemize}
\end{itemize}
See Figure~\ref{geneladder}.
\begin{figure}[h]\centering\includegraphics{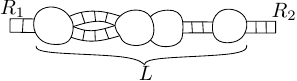}\caption{Generalized ladder.}\label{geneladder}\end{figure}
\end{defi}
A single cell meets the definition of a generalized ladder, while it is not a genuine ladder. Note that all generalized ladders have $\leq2$ exposed cells. In fact, as we will see later, every minimal disc diagram $(D,\partial D)\to(X,X^*)$ with $\leq2$ exposed cells is a generalized ladder. We will inductively prove for $D$ as in Theorem~\ref{cubical} that the following condition (which will be referred to as \emph{condition $(\star)$}) is satisfied:
\begin{itemize}\item $D$ is a generalized ladder, or\item $D$ has at least three exposed cells (shells of degree $\leq 4$ and/or corner-squares and/or spurs). However, if $D$ contains a $4$-shell, there are at least four exposed cells.\end{itemize}

\begin{lem}\label{pushout2}Let $(D,\partial D)\to (X^*, X)$ be a disc diagram such that $D=D_1\cup_C D_2$ where $C$ is a single cell. If both $D_1,D_2$ satisfy $(\star)$, then so does $D$.\end{lem}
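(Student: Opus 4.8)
The plan is to mirror the structure of the classical Lemma~\ref{pushout}, but now keeping track of the refined condition $(\star)$, which distinguishes the ``$4$-shell present'' case from the generic one, and of the richer taxonomy of exposed cells (shells of degree $\leq 4$, corner-squares, spurs) and of generalized ladders rather than genuine ladders. First I would dispose of the degenerate situation: if one of $D_1,D_2$ consists only of the cell $C$, then $D=D_i$ for the other index and there is nothing to prove. So assume $C\subsetneq D_i$ for $i=1,2$. Since $D$ embeds in the plane and $C$ is the unique cell along which $D_1$ and $D_2$ are glued, $C$ must be a boundary cell of each $D_i$; I would record this as the basic structural fact that makes the gluing ``visible'' from both sides.

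Next I would split into the two top-level cases of $(\star)$ for the summands. Case A: at least one of $D_1,D_2$, say $D_1$, has $\geq 3$ exposed cells (with the $4$-shell clause if applicable). Since we glue along a single cell $C$, at most one exposed cell of $D_1$ can be ``killed'' by the gluing (it must contain or be incident to $C$), so at least two exposed cells of $D_1$ survive as exposed cells of $D$ — here I would remark that being a spur, a corner-square, or a shell of degree $\leq 4$ is a local property preserved when we only attach more diagram along a far-away cell, and that a shell's inner path can only lose $D$-pieces, never gain them, when passing to a larger diagram, so degrees do not increase. Then on the $D_2$ side: if $D_2$ is a single cell it is itself exposed in $D$; otherwise $D_2$ satisfies $(\star)$ and contributes at least one exposed cell of $D$ not among those counted in $D_1$ (choosing one not incident to $C$). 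This gives $\geq 3$ in total; if moreover $D$ contains a $4$-shell, I would argue it lies in $D_1$ or in $D_2$, and in either case the ``$\geq 4$'' clause for that summand, together with the surviving contribution of the other summand, upgrades the count to $\geq 4$. The one subtlety I would be careful about: a $4$-shell of $D$ might \emph{not} be a $4$-shell of the summand it sits in (a $D_i$-piece could split into two $D$-pieces, or conversely), so I would phrase the bookkeeping in terms of: $D$ has a $4$-shell $\Rightarrow$ that cell is a shell of degree $\leq 4$ in its summand $D_i$ $\Rightarrow$ $D_i$ has $\geq 4$ exposed cells $\Rightarrow$ at least three survive in $D$, plus one from the other summand.

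Case B: both $D_1$ and $D_2$ are generalized ladders. This is the delicate case, and I expect it to be the main obstacle. Using Definition~\ref{genlad}, each $D_i$ is either a rectangle $I_{n_i}\times I_1$ or has the form $R_1^{(i)}\cup_{C_1^{(i)}} L^{(i)}\cup_{C_2^{(i)}} R_2^{(i)}$, and I would analyze where the gluing cell $C$ sits relative to this decomposition. If $C$ lies in an ``extremal'' position for both $D_i$ — i.e.\ in an attached square, in the far endpoint of an attached rectangle, or in an end-cell of the ladder part (equivalently, $C$ is at an end of the generalized ladder) — then the two generalized ladders concatenate along $C$ into a single generalized ladder $D$; I would verify this by matching the pieces of Definition~\ref{genlad} (two ladder parts joined through $C$ merge into one ladder, and an attached rectangle meeting a rectangle side merges, etc.), checking the end-cell condition on $C_1,C_2$. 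If instead $C$ is \emph{not} extremal in, say, $D_1$ — it sits in the middle of $D_1$ — then I would argue as in the classical proof that $D_1$ contributes two exposed cells of $D$ that are on opposite ends of $D_1$ and hence both survive (neither being $C$), while $D_2$, being a generalized ladder glued on, contributes at least one more exposed cell on its far side; since a generalized ladder has no $4$-shells (by the Example following Lemma~\ref{edgeinpiece}, ladders have only $0$- and $1$-shells, and the rectangle/square pieces contribute corner-squares, never $4$-shells), the sharpened clause is vacuous and $\geq 3$ exposed cells suffice. The genuinely finicky part is the enumeration of adjacency types for $C$ (vertex vs.\ edge, which part of each generalized ladder) and confirming that ``non-extremal in at least one side'' always yields the two-plus-one count — I would handle this with a short figure and a case table rather than prose, and this is the step where I would budget the most care.
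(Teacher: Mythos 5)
Your overall strategy coincides with the paper's: dispose of the degenerate case, note that $C$ is a boundary cell of each $D_i$, split into ``some $D_i$ has $\geq 3$ exposed cells'' versus ``both $D_i$ are generalized ladders'', and in the latter case enumerate the positions of the gluing cell $C$ to decide between ``$D$ is a generalized ladder'' and ``$D$ has three exposed cells''. The case table you defer to a figure is exactly what the paper's proof consists of in that second case, and your dichotomy (extremal gluing concatenates the two generalized ladders; non-extremal gluing leaves two exposed cells surviving on one side plus one on the other, with the $4$-shell clause vacuous) matches its conclusions.

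The one step that, as written, would fail is your $4$-shell bookkeeping. You assert that a $4$-shell of $D$ ``is a shell of degree $\leq 4$ in its summand $D_i$'' and that this forces $D_i$ to have $\geq 4$ exposed cells. Both implications are wrong. First, the inequality goes the other way: every $D_i$-piece of a cell $C'\subset D_i$ is a $D$-piece (cone-cells of $D_i$ are cone-cells of $D$, and hexagon moves in $D_i$ can be performed in $D$), so a minimal decomposition of the inner path of $C'$ into $D_i$-pieces is in particular a decomposition into $D$-pieces; hence $\deg_D(C')\leq\deg_{D_i}(C')$, and passing to the subdiagram can only \emph{raise} the degree, possibly to $5$, in which case $C'$ is not exposed in $D_i$ at all. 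Second, even granting degree $\leq 4$ in $D_i$, condition $(\star)$ only promises four exposed cells when $D_i$ contains a shell of degree \emph{exactly} $4$; if $C'$ were a $3$-shell of $D_i$ the sharpened clause would not fire and your count would bottom out at $3$ rather than the required $4$. What you actually need --- and what the paper asserts in one line --- is that the degree is preserved: for $C'$ disjoint from the gluing cell $C$, every $D$-piece of $C'$ is already a $D_i$-piece, because a cone-cell or rectangle realizing such a piece meets $\partial C'\subset D_i\smallsetminus C$ along an edge and cannot reach the other summand through the single cell $C$; hence $\deg_{D_i}(C')=\deg_D(C')=4$ and the sharpened clause of $(\star)$ applies to $D_i$. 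With that correction your count ($\geq 3$ survivors from $D_i$ plus one exposed cell from the other summand) goes through as in the paper.
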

\begin{proof}This proof is much like the proof of Lemma~\ref{pushout}. We can assume that $C\subsetneq D_i$ for $i=1,2$, because otherwise there is nothing to prove. If any of the components, say $D_1$, has $\geq3$ exposed cells, then $\geq2$ of them are disjoint from $C$, so they remain exposed in $D$. Together with one exposed cell in $D_2$ (possibly $D_2$ itself in the case where $D_2$ is a single cell) there are $\geq3$ exposed cells in $D$. Similarly, if there is a $4$-shell in $D$, then it is a $4$-shell of one of components $D_1,D_2$, say $D_1$. Thus $D_1$ has $\geq4$ exposed cells, so $\geq3$ remain exposed in $D$ and we have $\geq4$ exposed cells in total in $D$. 

Now suppose that $D_1,D_2$ are both generalized ladders. Then $D$ is a generalized ladder, 
if one of the following holds: 
\begin{itemize}
\item $C$ is a vertex and for $i=1,2$ either the vertex $C$ is contained in an exposed cone-cell/spur 
of $D_i$ (in that case, in particular, $D_i$ has at most one attached rectangle or square) or 
$D_i$ is a single square. See the first diagram in Figure~\ref{glueinggen}.
\item $C$ is a cone-cell and for $i=1,2$ it is exposed in $D_i$.
\item $C$ is an edge and for $i=1,2$ one of the following holds:\begin{itemize} 
\item $C$ is contained in an exposed cone-cell of $D_i$,
\item $C$ is a side of an attached rectangle $R_i$ in $D_i$ opposite to the side contained 
in the ladder part $L_i$ of $D_i$,
\item $D_i$ is a rectangle and $C$ is one of its sides.
\end{itemize}
See the second diagram in Figure~\ref{glueinggen}.

\item $C$ is a square and for $i=1,2$ the square $C$ is a corner-square contained in a rectangle $R_i$, 
that is either an attached rectangle of $D_i$ or $R_i=D_i$. Moreover $R_1\cup_C R_2$ is a rectangle 
whose two opposite sides are contained in cone-cells and/or corner-squares of $D$. 
See the third figure in Figure~\ref{glueinggen}.

\begin{figure}[h]\centering\includegraphics{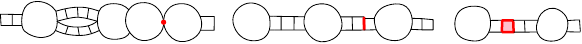}\caption{Gluing generalized ladders along marked cells.}\label{glueinggen}\end{figure}

\end{itemize}
In other cases, there are at least three exposed cells in $D$. Indeed one of the following holds:
\begin{itemize}
\item $C$ is not contained in an exposed cell in one of $D_1,D_2$, say $D_1$. 
In such case there are two exposed cells in $D_1$, which remain exposed in $D$, 
so there are at least three exposed cells in total.
\item $C$ is contained in an attached square $R$ of one of $D_1,D_2$. 
Then $R$ is a corner-square of $D$.
\item for $i=1,2$ the cell $C$ is contained in a rectangle $R_i$, that is either 
an attached rectangle of $D_i$ or $R_i=D_i$, but $R_1\cup_C R_2$ is not a rectangle with 
two opposite sides entirely contained in cone-cells and/or corner-squares of $D_i$.
See Figure~\ref{badglueing}.
\begin{figure}[h]\centering\includegraphics{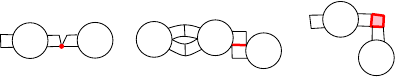}\caption{Gluing generalized ladders along marked cells.}\label{badglueing}\end{figure}
\end{itemize}

\end{proof}

Before the proof of Theorem~\ref{cubical} we define $D$-walls which will play similar role 
in cubical small cancellation complexes as hyperplanes play in cube complexes. 
This is the crucial tool we use in proving Theorem~\ref{cubical}. Next we discuss 
the notion of $\Gamma$-components, similar to one introduced in Section~\ref{sechyperplanes} 
and finally proceed with the proof.

\subsection{$D$-walls}
Throughout this section $(D,\partial D)\to (X^*, X)$ is a minimal disc diagram with no disconnecting cells 
such that all cone-cells embed. The complex $X^*$ is the presentation complex corresponding to 
a presentation $\langle X, \{Y_i\}\rangle$ satisfying $\mathrm C(9)$ condition.

\begin{defi}\label{D-wall} Let $e_0,\dots, e_n$ be a sequence of edges of $D$ and 
$C_1, \dots,C_n$ a sequence of $2$-cells in $D$ (with $e_{i}\neq e_{i+1}$ and $C_i\neq C_{i+1}$) 
such that for all $i=1,\dots, n$ we have
\begin{enumerate}[(1)]
\item either $C_i$ is a square with $e_{i-1}, e_i$ a pair of opposite edges, 
i.e.\  $e_{i-1}\cap e_i=\emptyset$,
\item or $C_i$ is a cone-cell with edges $e_{i-1}, e_i\subset \partial C_i$ such that 
there does not exist a subpath of $\partial C_i$ containing both $e_{i-1}$ and $e_i$, 
that can be expressed as a concatenation of $<5$ $D$-pieces.
\end{enumerate}
Such a pair of sequences $\Gamma=\{(e_i),(C_i)\}$ is called a \emph{$D$-wall}. 
Note that, if $C_n$ is a boundary cone-cell of $D$ and $e_{n-1}$ is an internal or semi-internal edge 
contained in $C_n$, then condition (2) is satisfied for any $e_{n}\subset \partial D\cap\partial C_i$. 
\end{defi}
The $D$-wall $\Gamma$ might be identified with a path graph locally embedded (not combinatorially) 
in $D$ in the following way:\begin{itemize}\item
the vertices of $\Gamma$ correspond to edges $e_0,\dots, e_n$ and they are mapped to the midpoints 
of corresponding edges
\item the edges of $\Gamma$ correspond to cells $C_1, \dots, C_n$ and each edge of $\Gamma$ 
is mapped to a midcube in the square, or respectively to the union of two intervals joining cone-point 
with midpoints of the appropriate edges in the cone-cell. See Figure~\ref{dwall}.
\end{itemize}
 \begin{figure}[h]\centering\includegraphics{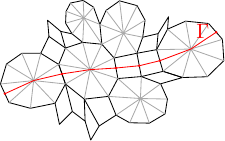}\caption{Example of a $D$-wall $\Gamma$.}\label{dwall}\end{figure}

The $D$-wall $\{(e_{k-1},\dots, e_l),(C_k,\dots, C_l)\}$ 
for some $1\leq k<l\leq n$ is called a \emph{sub-$D$-wall} 
of $\Gamma$. The $D$-wall $\Gamma=\{(e_0,\dots, e_n),
(C_1,\dots, C_n)\}$ with $n\geq 1$ is called 
\begin{itemize}
\item\emph{maximal} if $e_n\subset\partial D$ or $e_n=e_k$ for some $k<n$,
\item \emph{bimaximal} if both $\{(e_0,\dots, e_n),(C_1,\dots, C_n)\}$ 
and $\{(e_n,\dots, e_0),(C_n,\dots, C_1)\}$ are maximal. 
\end{itemize}
Let $e$ be an edge in $D$ and $K\subset D$ a subcomplex, we say that 
\begin{itemize}
\item $\Gamma$ is \emph{dual} to $e$, if there exists $k$ such that $e_k=e$ 
(then we also say that $e$ is dual to $\Gamma$),
\item $\Gamma$ \emph{starts} at $e$ (respectively, in $K$), if $e_0=e$ 
(respectively, if $e_0\subset K$),
\item $\Gamma$ \emph{terminates} at $e$ (respectively, in $K$) if $e_n=e$ 
(respectively, if $e_n\subset K$).\end{itemize}
Let $\Gamma'=\{(e_0',\dots, e_m'), (C_1',\dots, C_m')\}$. 
We say that $\Gamma$ and $\Gamma'$ \emph{intersect} if $C_i=C_j'$ for some $i,j$.

\begin{lem}Let $e$ be an edge in $D$. There exists a bimaximal $D$-wall dual to $e$.\end{lem}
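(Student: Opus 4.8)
The plan is to build a maximal $D$-wall starting at $e$ in each of the two directions and then concatenate them. First I would establish the basic extension step: given a $D$-wall $\Gamma=\{(e_0,\dots,e_n),(C_1,\dots,C_n)\}$ that is \emph{not} maximal (so $e_n\not\subset\partial D$ and $e_n\neq e_k$ for all $k<n$), I claim it can be extended by one more cell to $\{(e_0,\dots,e_n,e_{n+1}),(C_1,\dots,C_n,C_{n+1})\}$. Indeed $e_n$ is internal or semi-internal, so it is contained in at least two $2$-cells of $D$; at least one of them, call it $C_{n+1}$, is different from $C_n$. If $C_{n+1}$ is a square, take $e_{n+1}$ to be the edge opposite $e_n$ in $C_{n+1}$, which satisfies condition (1) of Definition~\ref{D-wall}. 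If $C_{n+1}$ is a cone-cell, I must produce an edge $e_{n+1}\subset\partial C_{n+1}$, $e_{n+1}\neq e_n$, such that no subpath of $\partial C_{n+1}$ containing both $e_n$ and $e_{n+1}$ is a concatenation of fewer than $5$ $D$-pieces. Here I would use the $C(9)$ hypothesis: the boundary path $\partial C_{n+1}$ is essential and cannot be written as fewer than $9$ pieces, hence (since a subpath of a $D$-piece is a $D$-piece, and $D$-pieces map to pieces by the Lemma just before Lemma~\ref{edgeinpiece}) cutting $\partial C_{n+1}$ at the edge $e_n$ gives an arc that is not a concatenation of fewer than $8$ $D$-pieces; splitting that arc in half, one of the two halves is not a concatenation of fewer than $4$ $D$-pieces, and I pick $e_{n+1}$ to be an edge of that half adjacent to the appropriate endpoint. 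A small amount of care is needed to ensure $e_{n+1}\neq e_n$ and that $e_{n+1}$ genuinely lies so that \emph{every} subpath of $\partial C_{n+1}$ containing both $e_n$ and $e_{n+1}$ fails to split into fewer than $5$ pieces (there are only two such subpaths, the two complementary arcs, and each contains one of the two halves above). The remark in Definition~\ref{D-wall} handles the degenerate case when $C_{n+1}$ is a boundary cone-cell.

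Next I would run this extension repeatedly. Starting from the trivial $D$-wall that is just the single edge $e$ (i.e. $n=0$), I extend one cell at a time. Since $D$ is finite and no cell can repeat consecutively, either the process terminates because some $e_{n+1}$ lies on $\partial D$ or equals an earlier $e_k$ — in which case the resulting $D$-wall is maximal by definition — or else it runs forever; but $D$ has finitely many edges, so some edge must eventually repeat, and at the first such repetition the $D$-wall is maximal. Hence there is a maximal $D$-wall $\Gamma^+=\{(e_0,\dots,e_n),(C_1,\dots,C_n)\}$ with $e_0=e$ (allowing $n\geq 1$; if $e$ already lies on $\partial D$ one extension step still produces $n=1$, and if the very first extended edge on one side is already $e$ we have a closed wall of length one — this edge case should be noted but causes no trouble).

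Then I would do the symmetric construction in the opposite direction: regard $e$ as the \emph{last} edge and extend backwards, producing a $D$-wall $\Gamma^-=\{(f_0,\dots,f_m),(C_1',\dots,C_m')\}$ with $f_m=e$ such that the reversed sequence is maximal, i.e. $\{(f_m,\dots,f_0),(C_m',\dots,C_1')\}$ is maximal. Concatenating, the sequence $\{(f_0,\dots,f_{m}=e=e_0,\dots,e_n),(C_1',\dots,C_m',C_1,\dots,C_n)\}$ is again a $D$-wall (the local condition at each internal edge was verified during one of the two constructions, and at the junction edge $e$ itself there is no condition to check since $e$ is a vertex of the wall, not a cell), and it is bimaximal because reading it left to right it ends with the maximal $\Gamma^+$ and reading it right to left it ends with the maximal reversed $\Gamma^-$. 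Thus it is a bimaximal $D$-wall dual to $e$.

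\textbf{Main obstacle.} The only real content is the extension step through a cone-cell, specifically checking that the $C(9)$ bound forces the existence of a valid next edge with the ``no subpath is fewer than $5$ $D$-pieces'' property on \emph{both} complementary arcs simultaneously. The arithmetic is tight: from $9$ pieces around $\partial C$ one peels off one piece at $e_n$ to get $\geq 8$, halves to get $\geq 4$ on one side, and needs $\geq 5$ — so in fact the clean statement is that one half is not a concatenation of fewer than $4$ pieces, and then one shows that \emph{any} subpath of $\partial C_{n+1}$ containing $e_n$ and an edge $e_{n+1}$ of that half must also contain that whole half minus possibly its extreme $D$-piece, keeping the count $\geq 4$; I would need to reorganize the bookkeeping so the numbers land on $<5$ as in condition (2). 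Everything else (finiteness, concatenation, bimaximality) is routine.
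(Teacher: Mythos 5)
Your skeleton (extend the wall one cell at a time in each direction, use finiteness of $D$ to terminate, concatenate the two maximal halves) matches the paper's reduction, and you correctly isolate the only nontrivial point: extending through a cone-cell $C$. But your argument for that step does not close, and you say so yourself. Peeling one $D$-piece off $\partial C$ at $e_n$ and halving the remaining arc only guarantees a half $H$ requiring $\geq 4$ $D$-pieces, whereas condition (2) of Definition~\ref{D-wall} demands that \emph{both} arcs of $\partial C$ joining $e_n$ to $e_{n+1}$ require $\geq 5$. Worse, even granting such an $H$, placing $e_{n+1}$ at the far end of $H$ controls only one of the two complementary arcs; the other arc runs through the \emph{other} half, about which the halving argument gives no information at all --- it could be a concatenation of two $D$-pieces. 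So this is not merely tight bookkeeping to be reorganized: the halving strategy genuinely discards the side you also need to bound.

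The paper closes the step by contradiction rather than by direct construction. Suppose that for \emph{every} edge $e''\subset\partial C$ some arc of $\partial C$ containing both $e_n$ and $e''$ is a concatenation of $\leq 4$ $D$-pieces. All these arcs contain $e_n$, so the one reaching furthest in one direction around $\partial C$ and the one reaching furthest in the other direction together cover $\partial C$ (every other such arc lies in their union). Hence $\partial C$ is a concatenation of $\leq 8$ $D$-pieces, hence of $\leq 8$ pieces of the corresponding $Y_i$, contradicting $C(9)$. In constructive form: let $P_1,P_2$ be the maximal arcs through $e_n$ in the two directions that are concatenations of $\leq 4$ $D$-pieces; since $P_1\cup P_2\neq\partial C$, any edge $e'$ outside $P_1\cup P_2$ works, because each of the two arcs from $e_n$ to $e'$ properly contains $P_1$ or $P_2$ and therefore needs $\geq 5$ pieces. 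This is the reorganization you were missing. The remainder of your argument --- the vacuous cases where $e_n$ or the candidate $e_{n+1}$ is a boundary edge, the square case, finiteness, and concatenation into a bimaximal wall --- is fine and agrees with the paper.
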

\begin{proof}
Since $D$ is compact, there are finitely many edges in $D$, so it suffices to prove that 
for every cone-cell $C$ and edge $e\subset\partial C$ there exists $e'\subset\partial C$ such that 
$e,e'$ satisfy Condition $(2)$ from Definition~\ref{D-wall}. Indeed, we construct 
a bimaximal $D$-wall step by step until it terminates in $\partial D$ or itself. If $e\subset \partial D$ 
we set $e'$ to any other edge in $\partial C$. If $e\not\subset \partial D$, but $C\cap\partial D$ 
contains some edges, we set $e'$ to any boundary edge in $C$. Assume that $C$ is a cone-cell 
with at most one boundary vertex in $\partial C$. Suppose that for every edge $e'$ in $\partial C$ 
there is a path in $\partial C$ containing both $e,e'$, that is a concatenation of $\leq4$ $D$-pieces. 
Then there exists a pair of such paths that covers whole $\partial C$, thus $\partial C$ can be expressed 
as a concatenation of $8$ $D$-pieces, which is a contradiction with $\mathrm C(9)$ condition. 
Thus there exists a required $e'$.

\end{proof}

Note that in general a maximal $D$-wall dual to an edge $e$ is not unique. 
The notions of hyperplanes and maximal $D$-walls starting at boundary edges coincide if 
$D\to X^*$  factors as $D\to X\to X^*$, i.e.\  if $D$ consists of squares only. 

A $D$-wall $\Gamma$ is \emph{collaring} if $\Gamma$ is not dual to any internal edge in $D$. 
We say that $D$  is \emph{collared}, if for every semi-internal edge $e$ every $D$-wall $\Gamma$ 
dual to $e$ is collaring. We say that $D$ is \emph{collared by a collection of $D$-walls 
$\Gamma_1,\dots, \Gamma_n$}, if $D$ is collared and every semi-internal edge $e$ is dual 
to $\Gamma_i$ for some $i$.

 \begin{figure}[h]\centering\includegraphics{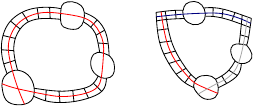}\caption{Collared disc diagrams.}\label{collared2}\end{figure}

\begin{rem}\label{collaringvalence}If $D$ is collared then for every boundary cone-cell $C$ with 
the inner path $e_1\cdots e_n$, paths $e_2\cdots e_n$ and $e_1\cdots e_{n-1}$ both can be expressed 
as concatenations of $<5$ $D$-pieces. In particular, $\deg_{D}C\leq5$.\end{rem}
\begin{proof}Indeed, otherwise the $D$-wall $\{(e_1,e_{n-1}),(C)\}$ or $\{(e_2,e_n),(C)\}$ 
would contradict the assumption that $D$ is collared.\end{proof}

\begin{lem}\label{collaredD}The diagram $D$ with at least one exposed cell is collared if and only if 
all $D$-walls starting in semi-internal edges of exposed cells are collaring.\end{lem}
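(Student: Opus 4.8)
The statement is an equivalence, so there are two directions. The forward direction (if $D$ is collared then all $D$-walls starting in semi-internal edges of exposed cells are collaring) is immediate from the definition of collared, exactly as in the $(1)\Rightarrow(2)$ step of Lemma~\ref{collaredbyhyperplanes}: a semi-internal edge of an exposed cell is in particular a semi-internal edge of $D$, so every $D$-wall dual to it (in particular every $D$-wall starting at it) is collaring by hypothesis. So the content is in the converse.

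For the converse, I would argue by contradiction, imitating the $(2)\Rightarrow(3)$ and $(3)\Rightarrow(1)$ steps of Lemma~\ref{collaredbyhyperplanes}. Suppose $D$ is not collared. Then there is a semi-internal edge $e$ and a $D$-wall $\Gamma=\{(e_0,\dots,e_n),(C_1,\dots,C_n)\}$ dual to $e$, say $e=e_0\subset\partial D$, that is also dual to some internal edge; equivalently, pick a bimaximal $D$-wall through $e$ and follow it inward. The plan is to show that such a $D$-wall must originate (after possibly passing to a sub-$D$-wall and reversing orientation) at a semi-internal edge lying in an exposed cell, and that this originating $D$-wall is \emph{not} collaring, contradicting the hypothesis. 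Concretely: consider the collection of all semi-internal edges $e'$ of $D$ such that some $D$-wall dual to $e'$ penetrates the interior (is not collaring). This set is nonempty by assumption. I want to locate such an $e'$ inside an exposed cell. The mechanism is that the boundary path $\partial D$, together with the semi-internal/internal edge structure, forces the existence of a ``corner-like'' configuration: moving along $\partial D$ one reaches a cell that is exposed (a spur, a corner-square, or a low-degree shell) precisely where the non-collaring behaviour can be pinned down — this is the same phenomenon as choosing the minimal index $i$ with $v_i$ of high valence in the proof of Lemma~\ref{collaredbyhyperplanes}, now upgraded to the cone-cell setting using $D$-pieces and Remark~\ref{collaringvalence}.

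The key steps, in order, would be: (i) dispatch the trivial forward direction; (ii) for the converse, assume not collared and fix a semi-internal edge $e$ with a non-collaring $D$-wall $\Gamma$ through it; (iii) walk along $\partial D$ from $e$, using that $D$ has no disconnecting cells, and use the structure of boundary cells — vertices of valence $\leq$ some bound, squares, cone-cells with inner paths controlled by $D$-pieces via $C(9)$ — to find a boundary cell $C$ that is exposed (spur / corner-square / shell of degree $\leq 4$) and which contains a semi-internal edge $e'$ dual to a non-collaring $D$-wall; (iv) conclude that the $D$-wall starting at $e'$ in $C$ is not collaring, contradicting the hypothesis. For step (iii) one needs the observation from Definition~\ref{D-wall} that a $D$-wall can always be started at a boundary edge of a boundary cone-cell, and from the $C(9)$ hypothesis that a boundary cone-cell not surrounded by enough pieces has small degree, so it is exposed.

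\textbf{Main obstacle.} The delicate point is step (iii): showing that the ``first'' place where a non-collaring $D$-wall meets the boundary can be taken to be inside an \emph{exposed} cell rather than in some intermediate non-exposed boundary cell. In the hyperplane (squared) case this was handled by a direct valence count along a boundary subpath; in the cone-cell case one must instead track $D$-pieces and use $C(9)$ to guarantee that a boundary cone-cell whose inner path cannot be covered efficiently by $D$-pieces has degree $\leq 4$, hence is exposed — and simultaneously rule out that the wall gets ``trapped'' in a chain of rectangles/pseudorectangles (which would make $D$ locally a generalized ladder). Handling this trapping case is where most of the care goes: one likely needs to invoke minimality of $D$ (to forbid bigons/degenerate squares as in Theorem~\ref{cubes} and Lemma~\ref{rectangle}) and possibly an inductive appeal within the larger argument, so in the final write-up this lemma may lean on the companion lemmas about $\Gamma$-components rather than being fully self-contained.
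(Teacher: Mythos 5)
Your forward direction matches the paper's (both dismiss it as immediate from the definitions), and you have correctly located the content in the converse. But your step (iii) is a genuine gap, and it is precisely the step the paper's proof is built to handle. Your plan is to transport the non-collaring behaviour from an arbitrary semi-internal edge $e$ to a semi-internal edge of an exposed cell by \emph{walking along the boundary path} $\partial D$, in analogy with the valence count in the squared Lemma~\ref{collaredbyhyperplanes}. That mechanism does not transport anything: the exposed cells of $D$ may lie far from $e$ along $\partial D$, and traversing the boundary gives you no $D$-wall relating $e$ to them, so there is no way to conclude that a wall starting \emph{at the exposed cell} fails to be collaring. The valence argument in Lemma~\ref{collaredbyhyperplanes} works because in a squared diagram a high-valence boundary vertex forces a specific square whose hyperplane turns inward; there is no analogous local count for cone-cells measured in $D$-pieces, and you explicitly leave this ``upgrade'' unperformed while acknowledging it is the main obstacle.

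The missing idea is to propagate along \emph{walls}, not along the boundary. The paper observes that for every semi-internal edge $e$ of a \emph{non-exposed} cone-cell $C$ there is a non-boundary edge $e'$ with $\{(e,e'),(C)\}$ a $D$-wall (a non-exposed $C$ has degree $\geq 5$, so Condition (2) of Definition~\ref{D-wall} can be met without landing on $\partial D$); squares always continue a wall to the opposite edge. Hence, under the hypothesis, every semi-internal edge is dual to a member of a collection $\mathcal G$ of $D$-walls each of whose two endpoints is a semi-internal edge of an exposed cell. Given a non-collaring segment $\{(e,e'),(C)\}$ with $e$ semi-internal and $e'$ internal, one takes the sub-$D$-wall of the element of $\mathcal G$ through $e$ running from an exposed cell to $e$ and concatenates it with this segment, producing a non-collaring $D$-wall starting in a semi-internal edge of an exposed cell --- the desired contradiction. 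Your write-up never produces such a connecting wall, so as it stands the converse is not proved; note also that the paper's argument is self-contained and does not need the $\Gamma$-component machinery you propose to lean on.
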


\begin{proof}
The implication from left to right is trivial. For the other implication, assume that all $D$-walls 
starting in semi-internal edges of exposed cells are collaring. Observe that for every semi-internel edge $e$ 
contained in non-exposed cone-cell $C$ there is an edge $e'$ which is not in $\partial D$ such that 
$\{(e,e'),(C)\}$ is a $D$-wall. Thus for every semi-internal edge $e$ of exposed cell there exists a $D$-wall 
starting at $e$ which also terminates at a semi-internal edge of an exposed cell. It follows that there exists 
a unique collection $\mathcal G$ of $D$-walls with both endpoints in semi-internal edges of exposed cells, 
such that every semi-internal edge in $D$ is dual to an element of $\mathcal G$.

If $D$ is not collared, there are a semi-internal edge $e$, an internal edge $e'$ and a $2$-cell $C$ 
such that $\Gamma=\{(e,e'),(C)\}$ is a $D$-wall. But $e$ is also dual to some $\Gamma'\in\mathcal G$, 
so taking the suitable sub-$D$-wall of $\Gamma'$ and composing it with $\Gamma$, we obtain a 
non collaring $D$-wall starting in a semi-internal edge of an exposed cell of $D$. This is a contradiction.

\end{proof}

\begin{rem}\label{collaringwalls=exposedcells}
Let $D$ be a collared disc diagram with $n\geq1$ exposed cells. We have $|\mathcal G|=n$ 
where $\mathcal G$ is the collection of $D$-walls from the proof of Lemma~\ref{collaredD}. 
The diagram $D$ is collared by $\mathcal G$. For any collection $\mathcal H$ such that $D$ 
is collared by $\mathcal H$ we have $|\mathcal H|\geq n$.

\end{rem}

We define the \emph{$D$-carrier} $N(\Gamma)$ of a $D$-wall $\Gamma=\{(e_i),(C_i)\}$ as follows
\[N(\Gamma):=\bigslant{\coprod\limits_{j=1}^{n}C_i}{\sim},\]
where $C_i, C_{i+1}$ are glued along the maximal $D$-piece of $C_i$ in $C_{i+1}$ containing $e_{i+1}$. 
It is immediate that $N(\Gamma)$ is a ladder. There is a natural combinatorial immersion 
$\iota: N(\Gamma)\to D$ whose image is the minimal subcomplex of $D$ that contains $\Gamma$ 
regarded as an immersed path graph. Whenever $\iota$ is an embedding we write $N(\Gamma)$ 
for $\iota\big(N(\Gamma)\big)$.

Let $\Gamma$ be a bimaximal $D$-wall with embedded $D$-carrier. Observe that $D-\Gamma$ 
has exactly two connected components, denote one of them by $K$. The \emph{$\Gamma$-component} 
corresponding to $K$ is defined as $K\cup N(\Gamma)$. It is the minimal subdiagram of $D$ containing 
$N(\Gamma)$ and $K$.

\begin{lem}\label{laddersomething}Let $C$ be a cone-cell which is an end-cell of a ladder $L$ contained in $D$. 
Denote by $P$ the inner path of $C$ in $L$. For any maximal $D$-piece $Q$ in $C$, which has a common edge 
with $P$, we have $P\subset Q$. See the left diagram in Figure~\ref{ladderinside}.
\begin{figure}[h]\centering\includegraphics{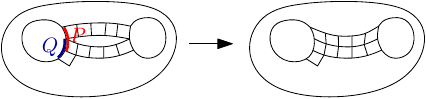}\caption{If $Q$ is maximal $D$-piece, then it contains whole $P$.}\label{ladderinside} \end{figure}
\end{lem}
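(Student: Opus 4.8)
The plan is to argue by contradiction, analyzing the structure that a maximal $D$-piece $Q$ with $P\not\subset Q$ would force inside the ladder $L$. First I would set up notation: write $P=f_1\cdots f_m$ for the inner path of the end-cell $C$ in $L$, so that $\partial C = P_0\,f_1\cdots f_m$ where $P_0$ is the boundary arc of $C$ in $L$. Since $Q$ is a maximal $D$-piece sharing an edge with $P$ but not containing all of $P$, some $f_j$ lies outside $Q$; because $Q$ is maximal and $D$-pieces are connected (any subpath of a $D$-piece is a $D$-piece), $Q$ meets $P$ in a proper connected subpath, so one endpoint of $Q\cap P$ is an interior vertex $v$ of $P$ at which $Q$ ``turns off'' $P$ — either $Q$ continues into $P_0$ (the boundary arc) or $Q$ is realized by a cone-cell/pseudorectangle on the other side. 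The key point is that $v$ is an interior vertex of the inner path of an end-cell of a ladder, hence $v$ has valence exactly $3$ in $L$ (and we may assume valence $3$ in $D$ as well after pushing, or handle the general case via the $D$-wall machinery), and the unique cell of $L$ on the other side of $f_j$ at $v$ is the next cell $C'$ in the ladder sequence.

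Next I would use the defining property of a ladder: removing $C$ from $L$ leaves $L$ connected and $C$ is attached to the rest of $L$ along $P$ (a single arc, since $C$ is an end-cell). So whatever cell or pseudorectangle $Q$ lives on — if $Q$ is a $D$-cone-piece it comes from some cone-cell $C''\ne C$, and if $Q$ is a $D$-hyperplane-piece it comes (after hexagon moves, via Lemma~\ref{rectangle}) from a pseudorectangle $R$ with one side on $\partial C$ — that cell or pseudorectangle must be $C'$ (or the pseudorectangle joining $C$ to the next cone-cell in the ladder), because that is the only part of $L$ adjacent to the interior of $P$. But then the maximal $D$-piece along $\partial C$ coming from $C'$ (resp. from $R$) is exactly all of $P$: in the cone-cell case this is because $C\cap C'$ is the full arc $P$ by the ladder structure, and in the pseudorectangle case it follows from Lemma~\ref{rectangle}, which tells us the side of $R$ on $\partial C$ is a genuine $D$-hyperplane-piece running the full length of the gluing. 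Either way $Q$, being the maximal $D$-piece containing the shared edge, contains $P$, contradicting $P\not\subset Q$.

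I expect the main obstacle to be the bookkeeping in the $D$-hyperplane-piece case, specifically making precise the claim that the interaction between $C$ and the rest of $L$ along $\mathrm{int}(P)$ is governed by a single pseudorectangle side rather than several distinct ones — one has to rule out the phenomenon from Figure~\ref{notunique}, where maximal $D$-hyperplane-pieces are genuinely non-unique, from occurring in a way that splits $P$. Here the ladder hypothesis is doing real work: in a ladder, the squares between $C$ and the adjacent cone-cell all lie in the carrier of a single collection of pairwise-disjoint hyperplanes (this is essentially the content of the pseudorectangle in item (3) of the ladder definition and of Lemma~\ref{rectangle}), so after the appropriate hexagon moves $P$ appears as one side of one rectangle $I_m\times I_1$, forcing $Q\supseteq P$. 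The cone-cell case and the degenerate cases ($L$ has only two cells, or the cell adjacent to $C$ is a vertex) are routine and I would dispatch them quickly at the start.
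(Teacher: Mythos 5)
Your overall strategy matches the paper's — reduce to the case where $Q$ is a $D$-hyperplane-piece, note that the ladder structure provides a pseudorectangle $R$ with side $P$, and invoke Lemma~\ref{rectangle} — and you correctly single out the non-uniqueness of maximal $D$-hyperplane-pieces (Figure~\ref{notunique}) as the crux. But your resolution of that crux is a non-sequitur. You argue that after hexagon moves $P$ appears as one side of one rectangle, ``forcing $Q\supseteq P$.'' That inference is exactly what Figure~\ref{notunique} refutes: there $e_1e_2$ and $e_2e_3$ are each a single side of a single rectangle, they share the edge $e_2$, each is a maximal $D$-piece, and neither contains the other. Knowing that $P$ is a single $D$-piece and that $Q$ is a maximal $D$-piece sharing an edge with $P$ gives nothing by itself; ``maximal'' means not properly contained in a larger $D$-piece, not that it absorbs every $D$-piece it meets along an edge.

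What must actually be shown is that the concatenation $P\cup Q$ is itself a single $D$-piece; maximality of $Q$ then yields $Q=P\cup Q\supseteq P$. This is precisely what the paper's proof does: it takes the pseudorectangle $R$ (with side $P$) together with the rectangle realizing $Q$, and applies the pushing argument of Lemma~\ref{rectangle} to move all squares lying between the hyperplanes dual to the edges of $P$ upward, so that after hexagon moves the diagram contains one rectangle whose side is $P\cup Q$. Your plan never merges the two rectangles — it only establishes that $P$ and $Q$ are each separately realized — so the key step is missing. (Your treatment of the remaining cases is fine: planarity forces the cell on the far side of an edge of $P$ to be the adjacent square of $R$ or, in the degenerate case, the adjacent cone-cell, and in the cone-cell case the uniqueness of maximal $D$-cone-pieces does close the argument.)
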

\begin{proof} We may assume that there is a rectangle in $D$ with side $Q$. Let $R$ be a pseudorectangle 
from the definition of ladder with side $P$. By Lemma~\ref{rectangle} we may push all squares lying between 
hyperplanes dual to edges in $P$ upward, so we get a diagram containing a rectangle with side $P\cup Q$. 
See Figure~\ref{ladderinside}.

\end{proof}

\begin{lem}\label{laddercomponent}Let $\Gamma$ be a bimaximal $D$-wall with embedded carrier and 
denote by $D', D''$ $\Gamma$-components of $D$. If $D'$ is a ladder (see Figure~\ref{ladderlemma}.), 
then $\deg_{D}(C)\leq\deg_{D''}(C)$ for every boundary cone-cell $C$ in $D''$. In particular, 
all exposed cells of $D''$ are also exposed in $D$.\end{lem}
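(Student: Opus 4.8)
The plan is to analyze how the inner path of a boundary cone-cell $C$ of $D''$ interacts with the $D$-wall $\Gamma$ and its carrier $N(\Gamma)$, which is a ladder. First I would observe that since $D'$ is a ladder, the only cells of $D'$ that can meet $C$ (other than cells of $D''$) lie in $N(\Gamma)$, and more precisely, by the structure of a ladder, the relevant part of $N(\Gamma)$ adjacent to $C$ is a pseudorectangle or end-cell. The key point is that any $D$-piece of $C$ coming from a cell in $D'$ — i.e.\ from the ladder side — is in fact a $D$-hyperplane-piece carved out by a pseudorectangle in $N(\Gamma)$, and by Lemma~\ref{laddersomething} such a maximal $D$-piece $Q$ in $C$ sharing an edge with the inner path $P$ of $C$ in the ladder $L=N(\Gamma)$ satisfies $P\subset Q$. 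So the entire ``ladder portion'' of the inner path of $C$ in $D$ is covered by a \emph{single} $D$-piece.

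Next I would set up the comparison of degrees explicitly. Write the inner path of $C$ in $D$ as $P_{\mathrm{in}}$ and the inner path of $C$ in $D''$ as $P_{\mathrm{in}}''$. Every internal arc of $C$ in $D''$ is still internal in $D$, so $P_{\mathrm{in}}''\subset P_{\mathrm{in}}$; the ``extra'' part $\overline{P_{\mathrm{in}}-P_{\mathrm{in}}''}$ consists of edges that become internal/semi-internal in $D$ because of the cells on the $\Gamma$-side, and by the previous paragraph this extra part (together with possibly one adjacent edge where it meets $P_{\mathrm{in}}''$) is contained in a single maximal $D$-piece $Q$. Now take a minimal decomposition $P_{\mathrm{in}}''=R_1\cdots R_{\deg_{D''}(C)}$ into $D''$-pieces; each $R_j$ is also a $D$-piece since any decomposition piece in $D''$ lifts to a piece in $Y_i$ and hence is a $D$-piece in $D$ as well. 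Concatenating these with $Q$ (and merging $Q$ with the $R_j$ it overlaps) yields a decomposition of $P_{\mathrm{in}}$ into at most $\deg_{D''}(C)$ $D$-pieces, giving $\deg_D(C)\le\deg_{D''}(C)$. For the ``in particular'' clause, corner-squares and spurs of $D''$ are unaffected since gluing on the ladder side does not change their local valence (the only cells added along $\Gamma$ are in the carrier, a ladder, and these do not raise valence at a spur or create a non-corner at a corner of $D''$ that is disjoint from $N(\Gamma)$); one must check the exposed cells of $D''$ are disjoint from, or at least not disturbed by, $N(\Gamma)$, which follows because an exposed cell of $D''$ near $N(\Gamma)$ is precisely the end-cell situation handled by Lemma~\ref{laddersomething}.

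The main obstacle I expect is the careful bookkeeping of which part of the inner path of $C$ lies on the $\Gamma$-side versus the $D''$-side, and making sure that the single $D$-piece $Q$ from Lemma~\ref{laddersomething} genuinely absorbs \emph{all} the extra internal edges — this requires knowing that $C$ meets $D'$ only through $N(\Gamma)$ and that $N(\Gamma)$ restricted to the cells touching $C$ behaves like the pseudorectangle/end-cell configuration in the definition of a ladder. A secondary subtlety is confirming that a $D''$-piece really is a $D$-piece: for cone-pieces this is immediate since $C'\subset D''\subset D$; for hyperplane-pieces one uses that a pseudorectangle in $D''$ is also a pseudorectangle in $D$ (hexagon moves internal to $D''$ extend to $D$), which is where the hypothesis that $\Gamma$ is bimaximal with embedded carrier is used. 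Once these structural facts are pinned down, the degree inequality and the statement about exposed cells follow formally.
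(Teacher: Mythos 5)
Your proposal follows essentially the same route as the paper: Lemma~\ref{laddersomething} absorbs the portion of the inner path facing the ladder $D'$ into a single $D$-piece, the degree inequality for the remaining cone-cells comes from the fact that every $D''$-piece is a $D$-piece (hexagon moves in $D''$ extend to $D$, and shared cone-cells give the cone-piece case), and the exposed-cell claim reduces to checking cells lying in $N(\Gamma)$. One small correction: your first justification that each $R_j$ is a $D$-piece ``since it lifts to a piece in $Y_i$'' runs the implication backwards (the paper only shows that $D$-pieces map to pieces in $Y_i$, not conversely) --- but the hexagon-move argument you give in your final paragraph is the correct one and is exactly what the paper uses.
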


 \begin{figure}[h]\centering\includegraphics{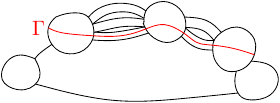}\caption{$D$-wall $\Gamma$.}\label{ladderlemma} \end{figure} 

\begin{proof}Let $C_1,\cdots, C_n$ denote the cone-cells in the ladder $D'$. By Lemma~\ref{laddersomething} 
we have $\deg_{D}(C)=\deg_{D''}(C)$. For any other cone-cell $C\neq C_i$ in $D''$ we have 
$\deg_{D}(C)\leq\deg_{D''}(C)$, because every hexagon move in $D''$ can be also performed 
in the bigger diagram $D$ which contains $D''$ and what follows every $D''$-piece of $C$ is a $D$-piece. 
It remains to verify that there are no exposed cells in $D''$, which are internal in $D$. 
All boundary cells of $D''$, which are internal in $D$ lie in $N(\Gamma)$, so they are not exposed in $D''$, 
by the definition of $D$-wall.\end{proof}

\begin{rem}\label{ladderremark} By the definition of a ladder, if both $\Gamma$-components in Lemma~\ref{laddercomponent} are ladders, then so is $D$.\end{rem}

\subsection{Preliminaries}
Let us now prove the following lemmas useful in the proof of Theorem~\ref{cubical}.

\begin{lem}[Lemma 3.6 in \cite{raags}]\label{square}
Let $D\to X$ be a minimal disc diagram in a non-positively curved cube complex $X$ and 
let $e,e'\subset\partial D$ be a pair of adjacent edges such that $\Gamma(e)$ and $\Gamma(e')$ 
intersect in a square $S$ in $D$. Suppose that $S$ is the only square of the intersection of 
$\Gamma(e),\Gamma(e')$ in $D$ and that $\Gamma(e),\Gamma(e')$ are collaring. 
See Figure~\ref{squarelemma1b}. Such square $S$ together with edges $e,e'$ will be referred to 
as \emph{generalized corner with edges $e,e'$}.

\begin{figure}[h]\centering\includegraphics{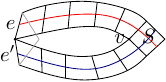}\caption{Generalized corner.}\label{squarelemma1b}\end{figure}
There exists a diagram $D_0\to X$ obtained from $D$ by a sequence of hexagon moves such that 
there is a square $S'$ in $D_0$ with $e'e$ a subpath of $\partial S'$.

\end{lem}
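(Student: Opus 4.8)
The plan is to push the squares of the intersection of $\Gamma(e)$ and $\Gamma(e')$ toward the corner, much as squares are pushed upward in the proof of Lemma~\ref{rectangle}, so that eventually a single square carries both dual edges in the boundary in the order $e'e$. Since $\Gamma(e),\Gamma(e')$ are collaring, they are not dual to any internal edge, so they are dual only to semi-internal and boundary edges; in particular each of $N(\Gamma(e))$ and $N(\Gamma(e'))$ meets $\partial D$, and the square $S$ sits where these two carriers cross. First I would set up the subdiagram $D'$ cut off by $\Gamma(e)$ and $\Gamma(e')$, namely the region between the two hyperplanes together with the relevant portions of their carriers, so that $\partial D'$ runs from $e$ to $e'$ along $\partial D$ and then back through the two carriers to the crossing square $S$. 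The hypothesis that $S$ is the unique square of $\Gamma(e)\cap\Gamma(e')$ means that neither hyperplane self-crosses inside $D'$ in a way that would produce extra intersection squares, so $D'$ really does look like the picture in Figure~\ref{squarelemma1b}: a ``bigon'' whose two sides are arcs of $N(\Gamma(e))$ and $N(\Gamma(e'))$ capped off by $S$.

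Next I would run an induction on the area of $D'$ (the number of squares between $\Gamma(e)$ and $\Gamma(e')$, not counting $S$ itself). If this region is trivial, then $S$ already has $e$ and $e'$ as two of its edges; since $S\subset D$ is a square in a non-positively curved complex, its boundary is embedded by the minimality argument used in the proof of Theorem~\ref{cubes} (no two consecutive edges of a square are identified), and $e,e'$ are consecutive, so some rotation of $\partial S$ reads $e'e$ and we may take $D_0=D$. If the region is nontrivial, I would apply Theorem~\ref{cubes} to the disc diagram obtained from $D'$ by forgetting $S$: it has no spurs (a spur would give two squares sharing two consecutive edges, impossible by minimality, exactly as in Lemma~\ref{rectangle}), and its only candidate corners are the two ``base'' vertices where the carriers meet $\partial D$ near $e$ and near $e'$; Theorem~\ref{cubes} forces a corner in the interior, i.e.\ a corner vertex $v$ in the strip between $\Gamma(e)$ and $\Gamma(e')$ distinct from those two. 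The three squares at $v$ then admit a hexagon move, which I perform to obtain a diagram $D_1$ from $D$; the hexagon move strictly decreases the number of squares lying between $\Gamma(e)$ and $\Gamma(e')$ while keeping $\Gamma(e),\Gamma(e')$ collaring, keeping $S$ the unique crossing square, and preserving $\partial D$. By induction there is a sequence of hexagon moves on $D_1$, hence on $D$, after which a square $S'$ has $e'e$ as a subpath of $\partial S'$.

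I expect the main obstacle to be the bookkeeping that guarantees the hexagon move at the interior corner $v$ is \emph{legitimate} and genuinely makes progress: one must check that $v$ has valence exactly $3$ (so that the three squares around it fit together as the three faces of a cube meeting at a vertex, which is where non-positive curvature of $X$ is used, via the hexagon-move existence statement in Section~3), that the move does not create a new square in $\Gamma(e)\cap\Gamma(e')$ or destroy collaredness, and that the complexity (here: area of the strip) strictly drops. A secondary subtlety is the degenerate case $k=l=0$ where $D'$ is essentially just $S$, which must be handled first to anchor the induction, together with the remark that $S$ being embedded with $e,e'$ distinct consecutive edges is needed even to state ``$e'e$ is a subpath of $\partial S'$''. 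Once these local checks are in place, the inductive descent is the same push-to-the-corner mechanism already used for Lemma~\ref{rectangle}, so no new ideas are required beyond careful case analysis.
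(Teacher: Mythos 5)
Your overall mechanism (locate a corner via Theorem~\ref{cubes}, perform a hexagon move, induct) is the right one and is the same engine the paper uses, but your induction only carries out \emph{half} of the paper's argument, and the half you do carry out has a false base case. The quantity you induct on --- the number of squares lying strictly between $\Gamma(e)$ and $\Gamma(e')$ --- can be zero while $S$ is still far from the vertex $w=e\cap e'$. Concretely, take three squares $A,B,C$ pairwise sharing an edge around a common vertex, with $e\subset A$, $e'\subset B$ adjacent at $w$, and $S=C$ the square where the two hyperplanes cross: here $N(\Gamma(e))=A\cup C$ and $N(\Gamma(e'))=B\cup C$, so no square lies strictly between the hyperplanes, yet $S$ contains neither $e$ nor $e'$. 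Your base case ``if the region is trivial then $S$ already has $e$ and $e'$ as edges'' is therefore wrong, and one further hexagon move (at the central vertex, involving $S$ itself and the two carrier squares) is still required. More generally, after the strip between the hyperplanes has been emptied, what remains is a bent $I_2\times I_n$ made of the two carriers glued along their inner sides with $S$ at the far end, and $n$ additional hexagon moves --- each of which \emph{changes} which square is the crossing square, contradicting your claimed invariant that $S$ stays the unique crossing square --- are needed to walk the crossing down to $w$. Your corner-finding recipe does not produce these moves, since the relevant valence-$3$ vertices are not corners of the strip.

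This missing second phase is exactly what the paper's proof of Lemma~\ref{square} supplies: it first treats the special case $\overline{D-S}=I_2\times I_n$ by the explicit descending sequence of hexagon moves $D_n,\dots,D_0$ along the path graph $\mathrm{Int}_D$, and only then reduces the general case to this one by invoking Lemma~\ref{rectangle} (which is precisely your flattening induction, already proved) applied to $\overline{D-S}$. To repair your argument you should either adopt that two-phase structure, or replace your induction measure by one that also counts the distance from $S$ to $w$ along the carriers, so that the walking moves register as progress.
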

\begin{proof}

Denote by $v$ the unique vertex in $S$ which is internal in $D$. First suppose that 
$\overline{D-S}=I_2\times I_n$ for some $n\geq0$, i.e the internal subdiagram $\text{Int}_D$ is a path graph. 
Denote by $v_1,\dots, v_n$ all consecutive vertices of $\text{Int}_D$ with $v_n=v$. Set $D_n=D$ and 
define $D_{k-1}$ as a diagram obtained from $D_k$ by a hexagon move applied to squares containing 
vertex $v_k$, see Figure~\ref{diagramDk}.
\begin{figure}[h]\centering\includegraphics{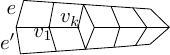}\caption{The diagram $D_k$.}\label{diagramDk}\end{figure}
The diagram $D_0$ contains a square $S$ such that $e e'$ is a subpath of $\partial C$.

In general case by Lemma~\ref{rectangle} applied to $\overline{D-S}$, we know that there exists 
a disc diagram $D'\to X$ obtained from $D$ by a sequence of hexagon moves such that the subdiagram 
lying between $\Gamma(e)$ and $\Gamma(e')$ in $\overline{D-S}$ is a path graph, so there is a subdiagram 
containing $e,e'$ and $S$ which has a form as in the first step. This completes the proof.

\end{proof}

Let us state two corollaries of Lemma~\ref{square}, which are useful in the proof of Theorem~\ref{cubical}. 
We assume that $(D,\partial D)\to (X^*, X)$ is a disc diagram in the presentation complex $X^*$ 
corresponding to a presentation $\langle X, \{Y_i\}\rangle$ which satisfies $\mathrm C(9)$ condition.
The first corollary is an immediate consequence of the assumption that maps $Y_i\to X$ are local isometries:

\begin{cor}\label{squarecor}Suppose $D$ contains a cone-cell $C$ and a generalized corner with edges $e,e'\subset \partial C$. See Figure~\ref{squarecor1}. Then $D$ is not minimal. \begin{figure}[h]\centering\includegraphics{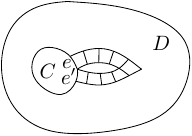}\caption{This diagram is not minimal.}\label{squarecor1}\end{figure}
\end{cor}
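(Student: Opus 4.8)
The plan is to argue by contradiction: suppose $D$ is minimal, contains a cone-cell $C$ corresponding to some $Y_i$, and contains a generalized corner with edges $e,e'\subset\partial C$ — that is, a square $S$ realizing the (unique) square of intersection of the collaring hyperplanes $\Gamma(e),\Gamma(e')$, with $e,e'$ adjacent. First I would invoke Lemma~\ref{square} to pass from $D$ to a diagram $D_0\to X$, obtained by a sequence of hexagon moves, containing a square $S'$ with $e'e$ a subpath of $\partial S'$. Since hexagon moves preserve $\mathrm{Comp}$ (they do not change the number of cone-cells and preserve the number of squares, and the degree of a boundary cone-cell is stable under hexagon moves by the remark following Lemma~\ref{edgeinpiece}), $D_0$ is still minimal, and the cone-cell $C$ is untouched, so in $D_0$ we still have $e,e'\subset\partial C$, now two adjacent edges of $\partial C$ forming two of the four edges of a square $S'$.

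Next I would lift to the universal cover. Let $\widetilde Y_i\to\widetilde X$ be the elevation of $Y_i$ corresponding to the cone-cell $C$; by Lemma~\ref{convex}, $\widetilde Y_i$ embeds in $\widetilde X$ as a convex subcomplex. Lifting the relevant portion of $D_0$ around $C$, the two edges $e,e'$ lift to two adjacent edges $\tilde e,\tilde e'$ of $\widetilde X$ that lie in $\widetilde Y_i$ (they are on the boundary of the lifted cone-cell, hence in $\widetilde Y_i$), and they are the two edges of a square $\widetilde{S'}$ of $\widetilde X$ meeting at a common vertex $\tilde v$. Since $\widetilde Y_i$ is convex, any square of $\widetilde X$ two of whose edges at a vertex lie in $\widetilde Y_i$ must itself lie in $\widetilde Y_i$: the two far vertices of the square are joined to $\tilde v$ by the edges $\tilde e,\tilde e'$, and the opposite vertex is at distance $2$, realized by a path through $\widetilde Y_i$; convexity (more precisely, the fact that $\widetilde Y_i$ is a convex subcomplex, so it is full and closed under taking squares on pairs of incident edges it contains) forces the whole square into $\widetilde Y_i$. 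Projecting down, the square $S'$ factors through $Y_i\to X$.

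This means the square $S'$ together with the edges $e,e'$ can be absorbed into the cone-cell $C$: the triangle configuration of $C$ around the corner can be replaced so that $S'$ becomes part of $\mathrm{Cone}(Y_i)$, using the universal property of the amalgamated pushout exactly as in the proof that $D$-pieces map to pieces (the argument for $D$-hyperplane-pieces there). Concretely, $C\cup_{e\cup e'} S'$ admits a map to $\mathrm{Cone}(Y_i)$, so we may replace $C$ and $S'$ by a single cone-cell contained in $\mathrm{Cone}(Y_i)$, obtaining a diagram $D_0'\to X^*$ with $\partial D_0'=\partial D_0$ and $\mathrm{Comp}(D_0')<_{lex}\mathrm{Comp}(D_0)$, since the number of cone-cells is unchanged but the number of squares drops by one. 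This contradicts the minimality of $D_0$, hence of $D$. The main obstacle is the convexity step — checking carefully that having two incident edges of a square inside the convex subcomplex $\widetilde Y_i$ forces the square inside; this is where Lemma~\ref{convex} and the local-isometry hypothesis on $Y_i\to X$ are essential, and it is the reason the corollary is flagged as "an immediate consequence" of that hypothesis.
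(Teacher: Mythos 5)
Your proof is correct and follows the route the paper intends: the paper gives no explicit argument, stating only that the corollary is an immediate consequence of Lemma~\ref{square} together with the assumption that the maps $Y_i\to X$ are local isometries, and that is precisely your chain (hexagon moves to produce $S'$ with $e'e\subset\partial S'$, a square of $Y_i$ mapping to $S'$, absorption of $S'$ into the cone-cell to decrease $\text{Comp}$ lexicographically). The only remark is that your detour through convexity of $\widetilde Y_i$ is unnecessary and slightly under-justified given the paper's purely $1$-skeleton definition of convexity (passing from ``all four edges lie in $\widetilde Y_i$'' to ``the $2$-cell lies in $\widetilde Y_i$'' needs fullness); the local isometry condition applied at the common vertex of $e,e'$ already yields the required square in $Y_i$ directly.
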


\begin{cor}\label{singlepiece}
Suppose that $(D,\partial D)\to (X^*,X)$ is a minimal disc diagram, $C\subset D$ is a cone-cell, 
$P=e_1\cdots e_n$ is a subpath of $\partial C$ and $e$ is an edge which has a common vertex with $e_1$, 
but is not contained in $\partial C$. Suppose that there is a subdiagram $D_P$ of $D$ consisting of squares only 
and containing $P$ and $e$ such that hyperplanes $\Gamma(e)$ and $\Gamma(e_n)$ intersect in $D_P$. 
See the left diagram in Figure~\ref{lemusingsquare}. Then there exists a diagram $D_P'$ 
obtained from $D_P$ by a sequence of hexagon moves such that $P\subset N\big(\Gamma(e)\big)$ 
in $D_P'$, see the right diagram in Figure~\ref{lemusingsquare}.  In particular, $P$ is a $D$-piece.

\begin{figure}[h]\centering\includegraphics{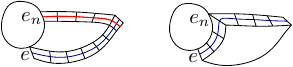}\caption{The diagrams $D_P$ and $D_P'$.}\label{lemusingsquare}\end{figure}\end{cor}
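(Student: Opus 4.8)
\textbf{Proof proposal for Corollary~\ref{singlepiece}.}

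The plan is to reduce this to an application of Lemma~\ref{square} by exhibiting, inside the square subdiagram $D_P$, a generalized corner whose two edges are $e$ and the appropriate extreme edge of $P$. First I would look at the hyperplanes $\Gamma(e_1),\dots,\Gamma(e_n)$ dual to the edges of $P$ and the hyperplane $\Gamma(e)$. Since $P=e_1\cdots e_n$ is a subpath of the immersed boundary path $\partial C$ of a cone-cell and $D$ is minimal, the hyperplanes $\Gamma(e_i)$ are pairwise non-crossing and non-self-crossing inside $D_P$ (a crossing would produce, by Lemma~\ref{square} applied to two consecutive edges of $\partial C$, a square with two consecutive edges on $\partial C$, contradicting minimality via Corollary~\ref{squarecor} — this is exactly the kind of bigon/triangle removal used repeatedly above). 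Thus inside $D_P$ the edges $e_1,\dots,e_n$ and $e$ bound a genuine pseudorectangle-type region: $\Gamma(e)$ enters $D_P$ through the boundary edge $e$, crosses each $\Gamma(e_i)$ in turn, and on the hypothesis $\Gamma(e)\cap\Gamma(e_n)\neq\emptyset$ it crosses all of them.

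Next I would peel off the hyperplanes one at a time. Consider $\Gamma(e)$ and $\Gamma(e_n)$: by hypothesis they meet in some square of $D_P$, and by minimality (the bigon-removal argument) they meet in \emph{exactly one} square. Moreover $\Gamma(e)$ and $\Gamma(e_n)$ are collaring in the subdiagram cut out between them — or more precisely, I would pass to the minimal subdiagram $D'$ of $D_P$ lying between $\Gamma(e)$ and $\Gamma(e_n)$ (in the sense of the subdiagram ``lying between'' two hyperplanes defined before Lemma~\ref{rectangle}) so that $\Gamma(e),\Gamma(e_n)$ become collaring there and their unique intersection square $S$ is a generalized corner with edges $e,e_n'$ where $e_n'$ is the copy of $e_n$. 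Lemma~\ref{square} then provides a sequence of hexagon moves (all of which can be carried out inside $D_P$, hence inside $D$, since hexagon moves are local and $D_P$ is a square subdiagram) after which there is a square $S'$ with $e$ and an edge parallel to $e_n$ consecutive on its boundary; equivalently, the hyperplane $\Gamma(e)$ now runs along $N(\Gamma(e_n))$, so $e_n\subset N(\Gamma(e))$. Repeating this for $\Gamma(e_{n-1}),\dots,\Gamma(e_1)$ — at each stage the remaining hyperplanes still cross $\Gamma(e)$ because hexagon moves do not change which hyperplanes cross which — pushes the whole path $P$ onto the carrier $N(\Gamma(e))$. This yields the diagram $D_P'$ with $P\subset N(\Gamma(e))$, and then reading off the side of the rectangle $N(\Gamma(e))\cap(\text{region between }\Gamma(e)\text{ and }P)$ shows $P=I_n\times\{1\}$ of a rectangle $I_n\times I_1$ after these moves, so $P$ is a $D$-hyperplane-piece, in particular a $D$-piece.

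The main obstacle I expect is bookkeeping the iteration cleanly: one must check that after pushing $\Gamma(e)$ past $\Gamma(e_n),\dots,\Gamma(e_{j+1})$, the hyperplanes $\Gamma(e_j),\dots,\Gamma(e_1)$ still satisfy the hypotheses of Lemma~\ref{square} inside the current diagram — namely that each still crosses $\Gamma(e)$ in a single square and the relevant subdiagram-between-them still has $\Gamma(e)$ and $\Gamma(e_j)$ collaring so the intersection square is a generalized corner and not part of a bigger configuration. This is handled by the minimality of $D$ (which is preserved under hexagon moves, and which forbids the bad configurations via Corollary~\ref{squarecor}) together with Lemma~\ref{rectangle}, which is exactly the statement that the squares between a family of parallel hyperplanes can be straightened into a rectangle; applying Lemma~\ref{rectangle} to the subdiagram $\overline{D_P}$ with $n$ the number of $\Gamma(e_i)$'s does the straightening in one stroke and then the single-hyperplane case of Lemma~\ref{square} finishes. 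The statement ``$P$ is a $D$-piece'' is then immediate from the definition of a $D$-hyperplane-piece.
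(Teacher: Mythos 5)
Your overall strategy --- reduce to Lemma~\ref{square} by exhibiting generalized corners and then push $N\big(\Gamma(e)\big)$ onto $P$ by hexagon moves --- is the right one, and your preliminary observations (the $\Gamma(e_i)$ are pairwise non-crossing by minimality, and since $\Gamma(e)$ meets $\Gamma(e_n)$ it is forced by planarity to meet every $\Gamma(e_i)$) are sound and are implicitly present in the paper's argument. The gap is in the peeling step: you peel from the \emph{far} end, applying Lemma~\ref{square} to the pair $(e,e_n)$. But a generalized corner is defined, and Lemma~\ref{square} is stated and proved, only for a pair of \emph{adjacent} boundary edges, and $e$ and $e_n$ share no vertex once $n>1$. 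Passing to ``the subdiagram lying between $\Gamma(e)$ and $\Gamma(e_n)$'' does not fix this --- that notion is itself defined in the paper only for adjacent edges, and in any case the edges $e_1,\dots,e_{n-1}$ still separate $e$ from $e_n$ on the boundary of any such subdiagram. Even granting the lemma, its conclusion (a square $S'$ with $e'e\subset\partial S'$) presupposes a common vertex, and producing a square containing $e$ and some edge \emph{parallel} to $e_n$ would not give $e_n\subset N\big(\Gamma(e)\big)$, since $e_n$ is a specific edge of $\partial C$. Your fallback of applying Lemma~\ref{rectangle} ``in one stroke'' has the same problem in disguise: to invoke it you must first know that the region between $P$ and the far side of $N\big(\Gamma(e)\big)$ is a pseudorectangle in the sense of Definition~\ref{rect} (in particular that the opposite side is exactly $e_n'\cdots e_1'$ with no intervening edges), which is essentially the statement being proved.

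The paper instead inducts on $n=|P|$ and peels from the \emph{near} end, where adjacency is available. For $n=1$ it applies Lemma~\ref{square} directly to the adjacent pair $(e,e_1)$. For $n>1$ it first argues (via Corollary~\ref{squarecor} and the trapping argument you sketched) that $\Gamma(e)$ must already cross $\Gamma(e_1)$; Lemma~\ref{square} then yields, after hexagon moves, a square $S$ with $ee_1\subset\partial S$. Letting $e'$ be the edge of $S$ opposite $e$ (so $\Gamma(e')=\Gamma(e)$, and $e'$ is adjacent to $e_2$), the induction hypothesis applies to $P'=e_2\cdots e_n$ and $e'$, and the moves can be chosen to leave $S$ unchanged. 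Reorienting your argument this way --- always working with the adjacent pair and carrying $\Gamma(e)$ forward one edge at a time --- closes the gap without changing your underlying picture.
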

\begin{proof} The proof is by induction on the length of $P$. If $n=1$, i.e.\ $P=e_1$, then 
the assertion follows immediately from Lemma~\ref{square} applied to the generalized corner 
with edges $e,e_1$. Suppose $n>1$, by Corollary~\ref{squarecor} we know that $\Gamma(e_i)$ 
and $\Gamma(e_{i+1})$ do not intersect for any $i=1,\dots, n-1$. It follows that $\Gamma(e_{1})$ 
intersects $\Gamma(e)$, hence by Lemma~\ref{square} there is a disc diagram $D'$ obtained from 
$D_P$ by a sequence of hexagon moves such that there is a square $S$ in $D'$ with $ee_1\subset \partial S$. 
Denote by $e'$ the edge opposite to $e$ in $S$ and set $P'=e_2\cdots e_n$. Note that $\Gamma(e_n)$ 
and $\Gamma(e)=\Gamma(e')$ intersect in $D'$, since $\Gamma(e_n)$ and $\Gamma(e)$ intersect in $D_P$. 
By the induction assumption applied to $P'$, $e'$ and the appropriate diagram there is a diagram $D_P'$ 
obtained from $D'$ by a sequence of hexagon moves leaving $S$ unchanged such that $P'$ is a path 
in $N\big(\Gamma(e')\big)$, by construction  so is $P$.
\end{proof}

\subsection{Proof of the main theorem}

In this section we prove Theorem~\ref{cubical}. The proof is divided into nine steps. 
The first three steps allow us to reduce the problem to diagrams with nontrivial internal subdiagram 
and all cells embedded and not disconnecting. In the fourth step we show that the $D$-carriers 
of $D$-walls embed. In the next two steps we restrict our attention to collared diagrams of two types: 
diagrams with exactly two exposed cells (in that case we intend to verify that they are ladders) and 
diagrams with three exposed cells with a $4$-shell among them (in that case we intend to obtain a contradiction). 
In Step 7 we prove that the internal subdiagrams are squared. Finally, in two last steps we show that there 
are no non-exposed cone-cells in our diagrams and what follows they are ladders in the first case and in the second case we obtain a contradiction.

It is immediate that condition $(\star)$ (formulated after Definition~\ref{genlad}) implies the hypothesis 
of the theorem. We will prove by induction on the number of cells that all minimal disc diagrams satisfy $(\star)$. 
We assume that all disc diagrams having fewer cells than $D$ satisfy $(\star)$ and deduce that so does $D$.

\begin{step}All cone-cells in $D$ are embedded. The intersection of two cone-cells is connected.\end{step}
\begin{proof}First suppose that $C$ is a cone-cell that does not embed. Let $P$ be the minimal subpath 
of $\partial C$ such that its endpoints are mapped to the same point $p$ in $D$. The path $P$ is 
a boundary path of the disc diagram $D'$, which is the closure of a connected component of $D-C$. 
See the left diagram in Figure~\ref{notembeddedcell}.
 \begin{figure}[h]\centering\includegraphics{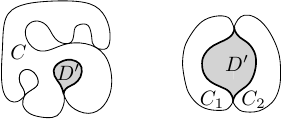}\caption{Cell $C$ is not embedded. 
 Cells $C_1,C_2$ intersect in two points.}\label{notembeddedcell}\end{figure}
There are no spurs in $D'$ by the minimality of $P$. Observe that for any shell $C'$ in $D'$, 
the connected intersection $C'\cap\partial D'$ is a path that can be expressed as a concatenation of 
$\leq2$ $D$-pieces. Thus there are no exposed cone-cells in $D'$ and $D'$ is not a single cone-cell. 
No vertex of $P$, except for $p$ possibly, is a corner of $D'$. Hence, $D'$ does not satisfy $(\star)$, 
which contradicts the induction assumption.
 
Let us now prove the second statement. Suppose that the intersection of cone-cells $C_1,C_2$ is not connected. 
Let $P_1\to\partial C_1, P_2\to\partial C_2$ be minimal paths such that $P_1,P_2$ have common endpoints in $D$. 
Their concatenation is a boundary path of the disc diagram $D'$, which is the closure of a connected component 
of $D-C_1\cup C_2$. See the right diagram in Figure~\ref{notembeddedcell}. Similarly as before, we conclude 
that $D'$ does not contain exposed cone-cells and has no more than two corners, hence $D'$ contradicts 
the induction assumption.
\end{proof}

\begin{step}We may assume that $D$ has no disconnecting cells.\end{step}
\begin{proof}This follows from Lemma~\ref{pushout2}. \end{proof}

\begin{step}We may assume that $\text{Int}_D\neq\emptyset$.\end{step}\begin{proof} Since $D$ has 
no disconnecting cells, by Lemma~\ref{nodisconnect} either $D$ consists of at most two cells, or 
$\text{Int}_D\neq\emptyset$. If $D$ is a single cell, there is nothing to prove. If $D$ consists of two cells, 
then there is a path of length $\geq2$ contained in their intersection, since there are no disconnecting cells. 
If $D$ consists of two cone-cells, then $D$ is a ladder. Otherwise, if $D$ contains a square, $D$ is not minimal.
\end{proof}
To prove $(\star)$ we will verify that
\begin{itemize}
\item either $D$ is a ladder consisting of two cone-cells joined by a pseudorectangle,
\item or there are at least three exposed $2$-cells in $D$.
\end{itemize}

\begin{step} For any $D$-wall $\Gamma$ the $D$-carrier $N(\Gamma)$ embeds in $D$.\end{step}\begin{proof} 

Suppose to the contrary that $\iota:N(\Gamma)\to D$ is not an embedding. Let 
$\Gamma'=\{(e_0,\dots, e_n),(C_1,\dots, C_n)\}$ be a minimal sub-$D$-wall of $\Gamma$ such that 
$N(\Gamma')$ does not embed. Since the intersection of any two cells is connected, we have $n\geq3$. 
Denote by $K$ the component of $D-\iota\big(N(\Gamma')\big)$ which intersects $\partial D$ trivially. 
Let $D_{\Gamma'}$ be the minimal subdiagram containing $\iota\big(N(\Gamma')\big)$ and $K$. 
 \begin{figure}[h]\centering\includegraphics{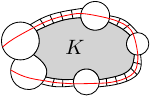}\caption{The diagram $D_{\Gamma'}$.}\label{wallsembedcubical}
\end{figure}
See Figure~\ref{wallsembedcubical}. 
By the minimality of $\Gamma'$ the diagram $D_{\Gamma'}$ is a disc diagram. One of the following holds:
\begin{itemize}
\item the diagram $D_{\Gamma'}$ is a proper subdiagram of $D$. The images of end-cells of $N(\Gamma')$ 
are the only possible exposed cells in $D_{\Gamma'}$. Since they are shells of degree $\geq2$, 
the diagram $D_{\Gamma'}$ cannot be a ladder, so since $D_{\Gamma'}$ has fewer cells than $D$ 
we obtain a contradiction with the induction assumption.
\item we have $D_{\Gamma'}=D$. Choose any boundary cell $C$ distinct from $C_1, C_n$ and consider 
a bimaximal $D$-wall $\Gamma''$ dual to an edge of $C$ that is contained in any piece intersecting $\partial D$. 
If $N(\Gamma'')$ is not embedded, then proceeding as in the first case, we obtain a proper subdiagram 
which contradicts the induction assumption. If $N(\Gamma'')$ is embedded, then one of 
the $\Gamma''$-components $D'$ is a diagram collared by $\Gamma''$ and sub-$D$-wall of $\Gamma$, 
so $D'$ has $\leq2$ exposed cells. The cell $C$ is either a corner-square or a shell of degree $\geq2$ in $D'$, 
so $D'$ is not a ladder, a contradiction.
\end{itemize}
\end{proof}

\begin{step}The diagram $D$ has at least two exposed cells.\end{step}\begin{proof}
Suppose $D$ has $\leq1$ exposed cells. Let $C$ be any non-exposed boundary cell in $D$ and 
let $\Gamma$ be a bimaximal $D$-wall dual to an edge of $C$ that is not contained in any piece 
intersecting $\partial D$. There is at least one $\Gamma$-component $D'$ that has $\leq2$ exposed cells. 
Since $C$ is a corner-square or a shell of degree $\geq2$ in $D'$, we conclude that $D'$ is not a ladder. 
The diagram $D'$ has less cells than $D$, so we obtain a contradiction with the induction assumption.
 \end{proof}
 
From now on we assume that \begin{enumerate}[(A)]\item 
the diagram $D$ has exactly $2$ exposed cells $C_1, C_2$, or
\item the diagram $D$ has three exposed cells $C_1, C_2, C_3$ and $C_3$ is a $4$-shell.\end{enumerate}
To complete the proof of the theorem we will verify that $D$ is a ladder in Case (A) and we will obtain 
a contradiction in Case (B).

\begin{step}\label{stepcollared}The diagram $D$ is collared and both $C_1, C_2$ have inner paths 
of length $2$ or $D$ is a ladder.\end{step}\begin{proof}
By Lemma~\ref{collaredD}, to verify that $D$ is collared and the internal paths of $C_1,C_2$ have length $2$ 
it suffices to check that all $D$-walls starting in $C_1,C_2$ \big(or in a semi-internal edge of $C_3$ in Case (B)\big) 
are collaring. Suppose that $\Gamma$ is a non collaring $D$-wall starting in one of $C_1,C_2$ 
\big(or in a semi-internal edge of $C_3$ in Case (B)\big). Let us consider Cases (A) and (B) separately:
\begin{enumerate}[(A)]
\item One of $\Gamma$-components has $\leq2$ exposed cells, so by the induction assumption is a ladder. 
By Lemma~\ref{laddercomponent} the other $\Gamma$-component has $\leq2$ exposed cells, 
so by the induction assumption it is also a ladder (this happens only if $\Gamma$ starts and terminates 
in $C_1$ and $C_2$). By Remark~\ref{ladderremark} the diagram $D$ is a ladder.
\item If there is a $\Gamma$-component with $\leq2$ exposed cells, then by the induction assumption 
it is a ladder and by Lemma~\ref{laddercomponent} the other $\Gamma$-component $D''$ has 
either $3$ exposed cells with a shell $C_3$ of degree $4$ among them or $\leq2$ exposed cells and 
at least one non-exposed shell. The second case occurs if $\deg_{D''}(C_3)>\deg_{D}(C_3)=4$, 
i.e.\ some $D$-piece in the inner path of $C_3$ in $D''$ is not a single $D''$-piece. In both cases 
$D''$ contradicts the induction assumption. If none of $\Gamma$-components has $\leq2$ exposed cells, 
then both have three exposed cells and one of them contains a $4$-shell, a contradiction.
\end{enumerate}
It follows that $D$ is collared and $C_1, C_2$ have inner paths of length $2$. 
\end{proof}
By Remark~\ref{collaringwalls=exposedcells}:
\begin{enumerate}[\text{In Case} (A)]
\item there exist $D$-walls $\Gamma, \Gamma'$ such that $D$ is collared by $\Gamma, \Gamma'$.
\item there exist $D$-walls $\Gamma, \Gamma_1', \Gamma'_2$ such that $D$ is collared by $\Gamma,\Gamma'_1,\Gamma_2'$ and $\Gamma'_i$ starts in $C_i$ and terminates in $C_3$. 
See Figure~\ref{ABcollared}.
\end{enumerate}
Set $\Gamma=\{(e_0^{\Gamma},\dots, e_m^{\Gamma}),(C_1^{\Gamma},\dots, C_m^{\Gamma})\}$ 
such that $e_o^{\Gamma}\subset C_1$ and $e_m^{\Gamma}\subset C_2$. Note that $C_1,C_2$ 
can be corner-squares and/or shells.

 \begin{figure}[h]\centering\includegraphics{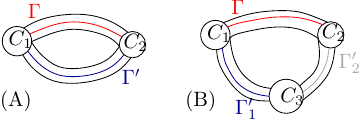}\caption{In Case (A) the diagram is collared 
 by two $D$-walls, in Case (B) the diagram is collared by three $D$-walls.}\label{ABcollared}
 \end{figure}

\begin{step}\label{stepsquares} Internal subdiagram $\text{Int}_D$ of $D$ is a squared diagram.
\end{step}
\begin{proof}
Suppose to the contrary that there is an internal cone-cell in $D$ and denote it by $C$. 
First suppose that there exists a $D$-wall starting in $C$ and terminating in $N(\Gamma)$. 
We will discuss the other case in the very end of this step. Let $e_1\cdots e_n$ be 
the boundary path of $C$ where $e_1$ is chosen so that there is a $D$-wall $\Gamma_{e_1}$ 
starting at $e_1$ which terminates in $C_{i_1}^{\Gamma}$ for minimal $i_1$ (i.e.\ 
closest to $C_1$ in $N(\Gamma)$). Let $e_k$ be the edge in $\partial C$ such that 
there exists a $D$-wall $\Gamma_{e_k}$ starting at $e_k$ which terminates in $C_{i_k}^{\Gamma}$ 
for maximal $i_k$. 
 \begin{figure}[h]\centering\includegraphics{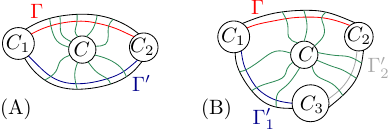}\caption{We suppose there is an internal cone-cell in $D$.}\label{internalconecell}\end{figure}
If $e_1\cdots e_k$ cannot be expressed as a concatenation of $<5$ $D$-pieces, then 
the diagram $D'$ collared by $\Gamma_{e_1}, \Gamma_{e_k}$ and the appropriate sub-$D$-wall 
of $\Gamma$ has $\leq2$ exposed cells and $\deg_{D'}(C)\geq 5$, so $D'$ is not a ladder, 
which is a contradiction with the induction assumption. Hence $e_1\cdots e_k$ can be expressed 
as a concatenation of $\leq4$ $D$-pieces. Observe that for every $l>k$ such that there is a 
$D$-wall $\Gamma_{e_l}$ starting at $e_l$ which intersects $\Gamma_{e_k}$ (respectively, $\Gamma_{e_1}$), 
the diagram collared by the appropriate sub-$D$-walls of $\Gamma_{e_k}$ and $\Gamma_{e_l}$ 
(respectively, $\Gamma_{e_l}$ and $\Gamma_{e_1}$) has $\leq2$ exposed cells. By the induction assumption 
it is a ladder, in particular, by Lemma~\ref{laddersomething}
any maximal $D$-piece containing $e_k$ (respectively $e_1$) contains also $e_l$. It follows that 
the minimal path containing $e_1\cdots e_k$ and every edge $e$ dual to some $D$-wall intersecting 
one of $\Gamma_{e_1},\Gamma_{e_k}$, can be expressed as a concatenation of $\leq4$ pieces. 
Denote by $P$ the maximal subpath of $e_{k+1}\cdots e_n$ such that all $D$-walls starting in $P$ 
intersect none of $\Gamma_{e_1},\Gamma_{e_k}$. By $\mathrm C(9)$ and our last observation $P$ 
cannot be expressed as a concatenation of $\leq5$ $D$-pieces. Denote by $\Gamma_1^P,\Gamma_2^P$ 
$D$-walls starting at two different end-cells of $P$. They do not intersect, because otherwise 
there would be a diagram with only one exposed cell collared by them. Since they do not intersect 
any of $\Gamma_{e_1}, \Gamma_{e_k}$, they both terminate in \begin{enumerate}[(A)]\item
$N(\Gamma')$,  \item$N(\Gamma_1')\cup C_3\cup N(\Gamma_2')$\end{enumerate} in the following way:
\begin{enumerate}[(A)]
\item the diagram $D'$ collared by $\Gamma_1^P,\Gamma_2^P$ and the appropriate sub-$D$-wall 
of $\Gamma'$ has $\leq2$ exposed cells and $\deg_{D'}(C)\geq5$. Thus $D'$ is not a ladder and 
this is a contradiction with the induction assumption.
\item the diagram $D'$ collared by $\Gamma_1^P,\Gamma_2^P$ and the appropriate sub-$D$-walls 
of $\Gamma_1',\Gamma_2'$ has either $\leq 2$ exposed cells and contains a shell of degree $\geq5$, 
or has $\leq3$ exposed cells with a $4$-shell among them. In both cases this is a contradiction with 
the induction assumption.
\end{enumerate}

Now suppose that no $D$-wall starting in $C$ terminates in $N(\Gamma)$. In Case (A) proceed 
exactly as before replacing $\Gamma$ by $\Gamma'$. In Case (B) for $i=1,2$ denote by $\Gamma_i^P$ 
the $D$-wall starting in $C$ and terminating in the closest cell to $C_i$ in $N(\Gamma_1')\cup C_3\cup N(\Gamma_2')$. Similarly as before, the subdiagram collared by $\Gamma_1^P,\Gamma_2^P$ and the appropriate 
sub-$D$-walls of $\Gamma_1',\Gamma_2'$ contradicts the induction assumption. Thus we have shown 
that $D$ has no internal cone-cells.

\end{proof}

\begin{step} In Case (A) the diagram $D$ is a ladder.
\end{step}

\begin{proof}
First, let us show that $D$ has no cone-cells at all 
except for $C_1, C_2$ possibly. Suppose to the contrary 
that $C$ is a non-exposed boundary cone-cell. 
Without loss of generality we can assume that 
$C\subset N(\Gamma)$. By Remark~\ref{collaringvalence} 
we know that $C$ is a $5$-shell. By Lemma~\ref{edgeinpiece} 
there exists an edge $e_2$ which is contained in $P_2$ 
for any decomposition of the inner path $P=P_1\cdots P_5$ 
of $C$ into $D$-pieces. Denote by $\Gamma_{e_2}$ 
a maximal $D$-wall starting at $e_2$. See Figure~\ref{Anoconecell}.
\begin{figure}[h]\centering\includegraphics{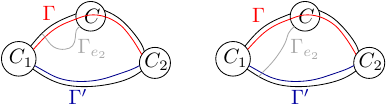}\caption{$\Gamma_{e_2}$ terminates in $N(\Gamma)$ or $N(\Gamma')$.}\label{Anoconecell}\end{figure}
One of the following holds: \begin{itemize}
 
\item If $\Gamma_{e_2}$ terminates in $N(\Gamma)$, 
then the $\Gamma_{e_2}$-component $D'$ collared 
by $\Gamma_{e_2}$ and the appropriate sub-$D$-wall 
of $\Gamma$ has $\leq2$ exposed cells. The inner path 
of $C$ in $D'$ cannot be expressed as a concatenation 
of $2$ (respectively $4$) $D$-pieces, so it cannot be 
expressed as a concatenation of $\geq2$ (respectively $\geq4$) 
$D'$-pieces. Thus $D'$ is not a ladder. This is a contradiction. 

\item If $\Gamma_{e_2}$ terminates in $N(\Gamma')$, 
then the $\Gamma_{e_2}$-component $D''$ such that 
$\deg_{D''}(C)\geq4$ has either three exposed cells with 
a $4$-shell among them (if $\deg{D''}(C)=4$) or 
two exposed cells and a non-exposed cone-cell 
(if $\deg_{D''}(C)>4$). By the induction assumption 
we obtain a contradiction.

\end{itemize}

Hence there are no cone-cells in $D$ except for $C_1,C_2$ possibly. Let us now 
consider different cases depending on what $C_1,C_2$ are:\begin{itemize}
\item if $C_1,C_2$ are both squares, then $D$ is a squared diagram with only two corners 
which is impossible by Theorem~\ref{cubes}.
\item if one of $C_1,C_2$ is a square and the other one is a cone-cell, then $D$ is not minimal 
by Corollary~\ref{squarecor}.
\item if $C_1,C_2$ are both cone-cells, then by definition $D$ is a ladder.
\end{itemize}

\end{proof}

\begin{step} Case (B) is not possible. \end{step}

\begin{proof}
First let us show that there are no cone-cells in $N(\Gamma'_1)\cup N(\Gamma'_2)$. 
Suppose to the contrary that $C$ is a non-exposed cone-cell, say in $N(\Gamma'_1)$, i.e.\ 
there exists $i$ such that $C_i^{\Gamma_1'}=C$ where 
$\Gamma_1'=\{(e_0^{\Gamma_1'},\dots, e_m^{\Gamma_1'}),(C_1^{\Gamma_1'},\dots, C_m^{\Gamma_1'})\}$ 
such that $e_0^{\Gamma_1'}\subset C_1$. Let $P$ be the inner path of $C$ such that 
its first edge is $e_i^{\Gamma_1'}$ and its last edge is $e_{i-1}^{\Gamma_1'}$. 
By Lemma~\ref{edgeinpiece} there is an edge $e$ contained in $P_2$ for any decomposition 
of the inner path $P=P_1\cdots P_5$ of $C$ into $D$-pieces. Denote by $\Gamma_e$ a maximal $D$-wall 
starting at $e$. The degree of $C$ in a $\Gamma_e$-component is $\geq2$ (respectively $\geq4$), 
since the inner path of $C$ in $\Gamma_e$-components cannot be expressed as a concatenation of $2$ 
(respectively $4$) $D$-pieces. If $\Gamma_e$ terminates in $N(\Gamma_1')$ then one 
of $\Gamma_e$-components has $\leq2$ exposed cells and contains a shell of degree $\geq2$, 
so it is not a ladder, a contradiction. 
If $\Gamma_e$ terminates in $N(\Gamma_2')$ then the $\Gamma_e$-component containing $C_3$ 
has $\leq3$ exposed cell and a $4$-shell among them, a contradiction. If $\Gamma_e$ terminates in $N(\Gamma)$, 
then the $\Gamma_e$-component that does not contain $C_3$ have either $3$ exposed cells with 
a $4$-shell $C$ among them or $\leq2$ exposed cell and a non-exposed cone-cell $C$, a contradiction. 
See Figure~\ref{Bpart1}. 
 \begin{figure}[h]\centering\includegraphics{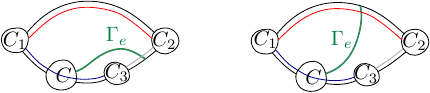}\caption{One of $\Gamma_e$-components has either $3$ exposed cells with a $4$-shell among them, or $\leq2$ exposed cell.}\label{Bpart1}\end{figure}
We have just proved that there are no cone-cells in $N(\Gamma'_1)\cup N(\Gamma_2')$.

Now we will show that the only cone-cells in $D$ are $C_3$ and $C_1,C_2$ possibly. 
It remains to verify that there are no cone-cell in $N(\Gamma)$. Suppose to the contrary that 
$C$ is a non-exposed cone-cell in $N(\Gamma)$, by Remark~\ref{collaringvalence} we know 
that $C$ is a $5$-shell. 

Denote by $e_3$ an edge contained in $P_3$ for every decomposition of the inner path 
$P=P_1\cdots P_5$ of $C$ into pieces. Let $e_2$ ($e_4$ respectively) be the first (the last respectively) 
edge in $P$ that is not contained in $P_1$ ($P_5$ respectively) for any decomposition of $P$ into pieces. 
Note that $e_2$ and $e_3$ ($e_3$ and $e_4$) are not contained in a single piece. Let  $P'$ be 
the minimal subpath of $P$ containing $e_2$ and $e_4$. Observe that every $D$-wall starting in $P'$ 
terminates in $N(\Gamma_1')\cup N(\Gamma_2')$. Otherwise, there would be a subdiagram with 
two exposed cells and a shell of degree $\geq2$ among them, which is impossible by the induction assumption. 
Let us consider three cases:
\begin{itemize}
\item we have $e_3\cap \big(N(\Gamma_1')\cup C_3\cup N(\Gamma_2')\big)=\emptyset$, see Figure~\ref{last1}. 
 \begin{figure}[h]\centering\includegraphics{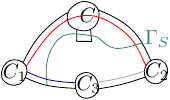}\caption{The edge $e_3$ is not contained in $N(\Gamma_1')\cup C_3\cup N(\Gamma_2')$.}\label{last1}\end{figure}
Denote by $S$ a square in $D$ which contains $e_3$. 
Let $\Gamma_S=\{(e_0^{\Gamma_S},\dots,e_m^{\Gamma_S}),(C_1^{\Gamma_S}, \dots, C_m^{\Gamma_S})\}$ 
be a bimaximal $D$-wall intersecting $S$ but not $e_3$, i.e.\ $S=C_i^{\Gamma_S}$ for 
some $i$ and $e_{i-1}^{\Gamma_S}, e_i^{\Gamma_S}\neq e_3$. See Figure~\ref{last1}. 
Observe that no endpoint of $\Gamma_S$ lies in $N(\Gamma)$, because otherwise by 
Corollary~\ref{singlepiece} the minimal subpath of $\partial C$ containing $e_3$ and one of 
$e_2,e_4$ would be a $D$-piece, but this is not the case. Thus both endpoints of $\Gamma_S$ 
lie in $N(\Gamma_1')\cup N(\Gamma_2')$. The diagram collared by $\Gamma_S$ and 
the appropriate sub-$D$-walls of $\Gamma_1'$ and $\Gamma_2'$ has either three exposed cells 
with a $4$-shell $C_3$ among them, or only two exposed cells and at least one of them is a corner-square. 
In both cases we obtain a contradiction.

\item we have $e_3\subset N(\Gamma_1')\cup N(\Gamma_2')$, see Figure~\ref{last2}. 
Without loss of generality, we may assume that $e_3\subset N(\Gamma_1')$. 
Let $e$ be the one of two edge intersecting $e_3$ and dual to $\Gamma_1'$ that is closer 
to $C_3$ in $N(\Gamma_1')$. Let $\Gamma_{e_2}$ be a $D$-wall dual to $e_2$. Since 
$\Gamma_{e_2}$ terminates in $N(\Gamma_1')$, by Corollary~\ref{singlepiece} applied to 
the minimal subpath of $\partial C$ containing $e_2$ and $e_3$ and to the edge $e$ we conclude 
that $e_2$ and $e_3$  are contained in a single $D$-piece, a contradiction. See Figure~\ref{last2}.
 \begin{figure}[h]\centering\includegraphics{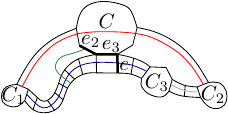}\caption{The edge $e_3$ is contained in $N(\Gamma'_1)$.}\label{last2}\end{figure}

\item we have $e_3\subset C_3$, see Figure~\ref{last3}. 
Let $Q=Q_1\cdots Q_4$ be some decomposition into $D$-pieces of the inner path $Q$ of $C_3$, 
where $\Gamma_1'$ is dual to an edge in $Q_1$. The edge $e_3$ is contained in one of $Q_2$, $Q_3$, 
without loss of generality we can assume that it is in $Q_2$. Note that $Q_3$ is a $D$-hyperplane-piece. 
Denote by $Q'',Q'$ paths such that $Q=Q'' e_3 Q'$, note that $Q_3Q_4\subset Q'$. Let $e$ be the edge 
that occurs right after $e_3$ in $P$. Since $D$-wall starting at $e$ terminates in $N(\Gamma'_2)$, 
by Corollary~\ref{singlepiece} applied to the path $Q'$ and edge $e$, we conclude that $Q_3 Q_4$ is 
a single $D$-piece, which is a contradiction. See Figure~\ref{last3}.
\begin{figure}[h]\centering
\includegraphics{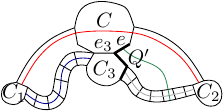}\caption{The edge $e_3$ is contained in $C_3$. }\label{last3}\end{figure}
\end{itemize} 
Thus, there are no cone-cells in $D$ except for $C_3$ and $C_1, C_2$ possibly. 

Finally, we show that such $D$ cannot exist. 
Let $Q$ be the inner path of $C$ with $\Gamma_1'$ dual the first edge $e_1$ of $Q$. Denote by 
$e_2$ ($e_3$ respectively) the first (the last respectively) edge in $Q$ that is not contained 
in $Q_1$ (respectively $Q_4$) for any decomposition $Q=Q_1\cdots Q_4$ into pieces. 
At most one of $e_2,e_3$ is contained in $N(\Gamma)$. Without loss of generality, assume 
that $e_2\not\subset N(\Gamma)$. Denote by $S$ the square in $D$ containing $e_2$ and 
by $e$ the one of two edges of $S$ dual to $\Gamma_S$ that intersect $Q$ farther from $e_1$. 
See Figure~\ref{lastlast}. \begin{figure}[h]\centering
 \includegraphics{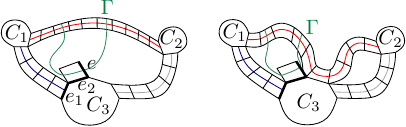}\caption{Corollary~\ref{singlepiece} is applied to the thickened path.}\label{lastlast}\end{figure}
By Corollary~\ref{singlepiece} applied to the minimal subpath of $Q$ containing $e_1$ and $e_2$ 
and to the edge $e$ we know that $\Gamma_S$ cannot have an endpoint in $N(\Gamma_1')$, 
because otherwise $e_1$ and $e_2$ would lie in the single piece of $C$, which is not the case. 
Similarly, we conclude that $\Gamma_S$ do not have endpoints in $N(\Gamma_2')$. It follows that 
both endpoints of $\Gamma_S$ lie in $N(\Gamma)$. Hence there is a squared diagram collared 
by $\Gamma_S$ and a sub-$D$-wall of $\Gamma$, which has only two corners and consists of squares only, 
which is a contradiction and completes the proof.

\end{proof}

\bibliographystyle{plain}
\bibliography{Sans-titre2}

\begin{thebibliography}{1}

\bibitem{greendlinger}
Martin Greendlinger.
\newblock On {D}ehn's algorithms for the conjugacy and word problems, with
  applications.
\newblock {\em Comm. Pure Appl. Math.}, 13:641--677, 1960.

\bibitem{special}
Fr{\'e}d{\'e}ric Haglund and Daniel~T. Wise.
\newblock Special cube complexes.
\newblock {\em Geom. Funct. Anal.}, 17(5):1551--1620, 2008.

\bibitem{vankampen}
Egbert R.~Van Kampen.
\newblock On {S}ome {L}emmas in the {T}heory of {G}roups.
\newblock {\em Amer. J. Math.}, 55(1-4):268--273, 1933.

\bibitem{lyndon}
Roger~C. Lyndon and Paul~E. Schupp.
\newblock {\em Combinatorial group theory}.
\newblock Classics in Mathematics. Springer-Verlag, Berlin, 2001.
\newblock Reprint of the 1977 edition.

\bibitem{fans}
Jonathan~P. McCammond and Daniel~T. Wise.
\newblock Fans and ladders in small cancellation theory.
\newblock {\em Proc. London Math. Soc. (3)}, 84(3):599--644, 2002.

\bibitem{mixed}
Piotr Przytycki and Daniel~T. Wise.
\newblock Mixed manifolds are virtually special.
\newblock \url{http://arxiv.org/abs/1205.6742}.

\bibitem{tartakovskii}
Vladimir A. Tartakovskii.
\newblock Solution of the word problem for groups with a k-reduced basis for k>6. (Russian)
\newblock {\em Izvestiya Akad. Nauk SSSR. Ser. Mat.}, 13:483--494, 1949.


\bibitem{hierarchy}
Daniel~T. Wise.
\newblock The structure of groups with quasiconvex hierarchy.
\newblock \url{https://docs.google.com/open?id=0B45cNx80t5-2T0twUDFxVXRnQnc}.

\bibitem{cubulatingsmall}
Daniel~T. Wise.
\newblock Cubulating small cancellation groups.
\newblock {\em Geom. Funct. Anal.}, 14(1):150--214, 2004.

\bibitem{raags}
Daniel~T. Wise.
\newblock {\em From riches to raags: 3-manifolds, right-angled {A}rtin groups,
  and cubical geometry}, volume 117 of {\em CBMS Regional Conference Series in
  Mathematics}.
\newblock Published for the Conference Board of the Mathematical Sciences,
  Washington, DC, 2012.

\end{thebibliography}

\end{document}